\newcommand{\al}{\alpha}
\newcommand{\fy}{\varphi}
\let\orgdescriptionlabel\descriptionlabel
\renewcommand*{\descriptionlabel}[1]{%
  \let\orglabel\label
  \let\label\@gobble
  \phantomsection
  \edef\@currentlabel{#1}%
  \let\label\orglabel
  \orgdescriptionlabel{#1}%
}
\theoremstyle{plain}
\newtheorem{theorem}{Theorem}[section]
\theoremstyle{definition}
\newtheorem{definition}[theorem]{Definition}
\newtheorem{proposition}[theorem]{Proposition}
\newtheorem{remark}[theorem]{Remark}
\newtheorem{corollary}[theorem]{Corollary}
\newtheorem{lemma}[theorem]{Lemma}
\newtheorem{example}[theorem]{Example}
\numberwithin{equation}{section}
\def\al{\alpha}
\def\fy{\varphi}
\def\Om{\Omega}
\def\Gd{D_{\infty}^{(\rho)}}
\def\Mitaga1{{E_{\al,1}}}
\def\L2o{{L_2(\Om)}}
\def\II{\Omega}
\def\Om{\Omega}
\def\II{(\Om)}
\begin{document}





\title[\hfil Stochastic representation of solution to nonlocal-in-time diffusion\MakeLowercase{s}\hfil]{{\itshape Stochastic representation of solution to nonlocal-in-time diffusion}}
\author{\textsc{Q\MakeLowercase{iang} D\MakeLowercase{u}, L\MakeLowercase{orenzo} T\MakeLowercase{oniazzi}} \MakeLowercase{and} \textsc{Z\MakeLowercase{hi} Z\MakeLowercase{hou}}}
\date{\today}
\subjclass[2010]{45K05, 60H30}
\keywords{Nonlocal evolution, historical initial condition, Feynman-Kac formula.}
\thanks{QD (Columbia University) is supported in part by  NSF DMS-1719699,
AFOSR  MURI center for Material Failure Prediction through peridynamics and  the ARO MURI Grant W911NF-15-1-0562.\\
LT (University of Warwick) is supported by the EPSRC, UK. \\
ZZ (The Hong Kong Polytechnic University) is partially supported by the start-up grant from the Hong Kong Polytechnic University and
 a grant from the Research Grants Council of the Hong Kong Special Administrative Region (Project No. 25300818). }

\address{Qiang Du \\ Department of Applied Physics and Applied Mathematics\\
 Columbia University\\
  New York 10027, USA
	}
  \email{qd2125@columbia.edu}
	
	\address{Lorenzo Toniazzi\newline
Department of Mathematics,\\
 University of Warwick,\\
 Coventry, United Kingdom. }
\email{l.toniazzi@warwick.ac.uk} 

\address{Zhi Zhou\\
Department of Applied Mathematics\\
The Hong Kong Polytechnic University\\
 Kowloon, Hong Kong, China}
 \email{zhizhou@polyu.edu.hk}




\maketitle



\begin{abstract}
The aim of this paper is to give a stochastic representation for the solution to a natural extension of the Caputo-type evolution equation. The nonlocal-in-time operator  is defined by a hypersingular integral with a (possibly time-dependent) kernel function, and it results in a model which serves a bridge between normal diffusion and anomalous diffusion.
We derive the stochastic representation for the weak solution of the nonlocal-in-time problem in case of nonsmooth data.
We do so by starting from an auxiliary  Caputo-type evolution equation with a specific forcing term.
Numerical simulations are also provided to support our theoretical results.
\end{abstract}



\section{Introduction}\label{section:intro}
In this paper, we study the   nonlocal-in-time evolution equation
\begin{equation}\label{preRL}
\left\{
\begin{split}
  D_{\infty}^{(\rho)}  u(t,x) - \Delta   u(t,x)&=  f(t,x), &  &~  ~ (t,x)\in(0,T]\times \Omega,\\
  u(t,x)&=0,   & & ~ ~(t,x)\in(0,T]\times \partial\Omega, \\
  u(t,x)&=\phi(t,x), &  &~ ~(t,x)\in (-\infty,0]\times \Omega,
\end{split}
\right.
\end{equation}
where $\Omega\subset \mathbb R^d$ is a regular domain, the functions $f$ and $\phi$ are given data, and $ D_{\infty}^{(\rho)}$ denotes the nonlocal operator defined by
\begin{align}\label{eqn:op}
  D_{\infty}^{(\rho)} u(t)   &:= \int_0^{\infty} (u(t) - u(t-r))\rho(t,r)\,dr, 
\end{align}
with the nonnegative kernel function $\rho \ge 0$ satisfying certain hypothesis (see details in Section \ref{sec:prelim}).
The nonlocal operator $ -D_{\infty}^{(\rho)}$ is proved to be the Markovian generator of a $(-\infty,T]$-valued decreasing L\'evy-type  process, denoted by $-X^{t,(\rho)}$ when started at $t\in[0,T]$.
We denote by $B^x$ a $d$-dimensional Brownian motion started at $x\in \mathbb R^d$ generated by the Laplacian $\Delta$.
The processes $-X^{t,(\rho)}$ and $B^x$ are always assumed to be independent.


The aim of the current work is to derive a stochastic representation for the solution to the problem \eqref{preRL} with the historical initial condition.
Besides their theoretical importance, stochastic representations are extensively used in applications, e.g.,  to compute solutions through the particle tracking method (see, e.g., \cite{ZM08, ZM06}).
It is a deep and classical result that the solution to the diffusion equation
\begin{equation*}
\left\{
\begin{split}
  \partial_t u(t,x) &= \Delta   u(t,x),&  & ~ ~(t,x)\in (0,T]\times \mathbb{R}^d,\\
  u(0,x)&=\phi(0,x),&  &~ ~x\in \mathbb{R}^d,
\end{split}
\right.
\end{equation*}
allows the stochastic representation $u(t,x)=\mathbf E[\phi(0,B^x(t))]$. This normal diffusion model describes diffusion phenomena that exhibits homogeneity in both space and time.
With the aid of single particle tracking,
recent studies have provided many examples of
anomalous diffusion.
One typical example is the time-fractional (sub-)diffusion model,
\begin{equation} \label{eqn:frac}
\left\{
\begin{split}
  \partial_t^\alpha u(t,x) &= \Delta   u(t,x), &  &~ ~ (t,x)\in (0,T]\times \mathbb{R}^d,\\
  u(0,x)&=\phi(0,x),&  &~ ~ x \in \mathbb{R}^d,
\end{split}
\right.
\end{equation}
where $\partial_t^\alpha$  denotes the  Caputo fractional derivative of order $\alpha\in(0,1)$, which can be defined by
\begin{equation*}
  \partial_t^\alpha u (t)=  \int_0^t  \frac{(t-r)^{-\alpha}}{\Gamma(1-\alpha)} \partial_r u(r)  \,dr.
\end{equation*}
The sub-diffusion phenomena has attracted much attention in applications such as contaminant transport in groundwater \cite{kirchner2000fractal}, protein diffusion within cells \cite{golding2006physical}, and thermal diffusion in fractal media \cite{nigmatullin1986realization}.
 The problem \eqref{eqn:frac} has been extensively studied  both analytically and numerically (see \cite[Chapter 2.4]{Meerschaert2012} for an overview).
Its solution can be expressed by $u(t,x)=\mathbf E[\phi(0,Y^x(t))]$ \cite{schef04},
where  $Y^x(t)= B^x(\tau_0^\alpha(t))$ and $\tau_0^\alpha(t)=\inf \{s>0:X^\alpha(s)\ge t\}$
is the inverse process of the $\alpha$-stable subordinator $X^\alpha$.
The density of  $Y^x(t)$ can be derived using a conditioning argument \cite{Zal97, BM01}
\begin{equation}\label{FSC}
H_{t,x}(y)=\int_0^\infty p_s(x,y)\partial_s\mathbf P[X^{\alpha}(s)\ge t]\,ds,
\end{equation}
where $\partial_s\mathbf P[X^{\alpha}(s)\ge t]=\alpha^{-1}ts^{-1- 1/\alpha}g_\alpha(ts^{-1/\alpha})$,
with $g_\alpha$ being the density of $X^\alpha(1)$ and $p_s(x)$ the density of $B^x(s)$. 
It is interesting to observe that the time-changed Brownian motion $Y^x(t)$ displays time heterogeneity,
 as the non-Markovian time change $t\mapsto \tau_0^\alpha(t)$ is constant precisely when the subordinator $t\mapsto X^\alpha(t)$ jumps  \cite{Meerschaert2012}.  This leads to the past-dependent diffusion  $Y^x$ being trapped, and in general spreading
 at a slower rate than $B^x$ (see e.g. \cite{Zas94, Sai05, Mag07}). Let us recall that $Y^x$ is sometimes called fractional kinetic and it enjoys surprising universality properties \cite{barlow2011convergence}.
 Moreover, the result can be generalized to other Caputo-type derivatives \cite{MS06, Vel11, Chen17, HKT17}.
It is easy to see that the Caputo fractional derivative can be written in the form \eqref{eqn:op} by
\begin{equation*}
  \partial_t^\alpha u (t)=  c_\alpha \int_0^\infty (u(t)-u(t-r)) r^{-\alpha-1} \,dr,
\end{equation*}
with the kernel $\rho(t,r):=c_\alpha r^{-\alpha-1}$, $c_\alpha =-\Gamma(-\alpha)^{-1}$, where we extend the function $u$ to the negative real line by $u(t)\equiv u(0)$ for $t\in(-\infty,0)$.
On the other hand, under certain hypothesis,
one may show that the nonlocal operator could reproduce the first order derivative,
as the horizon of nonlocal effects tends to zero \cite{DYZ17}.
Therefore, it is actually an interesting intermediate case between infinite-horizon fractional derivatives and infinitesimal local derivatives.
Moreover, it can be shown that the nonlocal setting also serves to bridge between a short-time anomalous diffusion and a long-time normal diffusion \cite{DZ2018},
which has been observed in many experiments \cite{He2016}. 

Compared with the fractional diffusion model \eqref{eqn:frac}, the nonlocal-in-time model \eqref{preRL} requires a historical initial data, which could be time-dependent. As far as we know, the only work concerning the stochastic explanation  of the historical initial data is \cite{T18}, which deals with the fractional case. In this work, we derive a stochastic representation of the solution to the problem \eqref{preRL} with a possibly time-dependent kernel $\rho$ and a historical initial data $\phi$.  As an example, we prove that the weak solution to the homogenous problem (for $f=0$) allows the stochastic representation
\begin{equation}\label{SRin0}
\begin{split}
  u(t,x)&= \mathbf E\left[ \phi\left(-X^{t,(\rho)}(\tau_0(t)),B^x(\tau_0(t))\right)\mathbf 1_{\{\tau_0(t)< \tau_\Omega(x)\}}\right]\\
  &=\int_{-\infty}^0\int_\Omega \phi(r,y)H_{t,x}(r,y)\,dr\,dy,
\end{split}
\end{equation}
where $\tau_0(t)=\inf \{s>0:-X^{t,(\rho)}(s)\le 0\}$, $\tau_\Omega(x)= \inf \{s>0:x+B(s)\notin\Omega\}$
 and the heat kernel is given by
\[
H_{t,x}(r,y)=\int_0^t \rho(z,z-r)\left(\int_0^\infty p^\Omega_s(x,y)\partial_z\mathbf P[-X^{t,(\rho)}(s) \le z]\,ds\right)dz.
\]
Here we denote by $p^\Omega_s(x,y)$ the density of the killed Brownian motion $B^x(s)\mathbf 1_{\{s< \tau_\Omega(x)\}}$. Note that for the standard fractional kernel $\rho(t,r)=c_\alpha r^{-\alpha-1}$, $-X^{t,(\rho)}=t-X^\alpha$ and $\tau_0(t)=\tau_0^\alpha(t)$.
The representation \eqref{SRin0} appears to be new, and it suggests an interesting interpretation.
This is because the diffusion on $\Omega$ is still the anomalous diffusion $Y^x(t)=B^x (\tau_0(t) )$, but the
contribution in time of the initial condition $\phi(\cdot,Y^x(t))$ depends on the waiting/trapping time of $Y^x(t)$,
which is indeed $W(t)=X^{t,(\rho)} (\tau_0(t))$. Let us stress that as a particular case we treat Caputo-type EEs.

The paper is organized as follows. In Section \ref{sec:prelim}, we introduce some basic settings of the nonlocal-in-time model
\eqref{preRL} as well as some probabilistic background. Some popular and concrete models, which satisfy certain hypothesis, will be provided as examples. In Section \ref{sec:generaltheory}, after reformulating the model \eqref{preRL} into a Caputo-type fractional diffusion problem,
we develop some general solution theory,
provided additional smoothness and compatibility conditions on problem data.
In Section \ref{sec:weak} we prove Theorem \ref{thm:main}, where we show that the candidate stochastic representation provides a weak solution of \eqref{preRL} even
though the data is weak.
Finally, we present some numerical experiments to illustrate our theoretical findings.
Throughout, the notation $c$ denotes a generic positive constant, whose value may differ at each occurrence.


\section{Preliminaries}\label{sec:prelim}

\subsection{General notation}
We denote by $\mathbb{N}$, $\mathbb R^+$, $\mathbb{R}^d$, $a\wedge b$,  $\Gamma (\cdot)$, $\mathbf 1_E(\cdot)$ and a.e., the set of positive integers, the set of non-negative real numbers,  the $d$-dimensional Euclidean space, the minimum between $a,b\in\mathbb R$, the Gamma function, the indicator function of the set $E$ and the statement almost everywhere with respect to Lebesgue measure, respectively.  To ease notation, $F(I)=FI$ whenever $F(I)$ is a space of real-valued functions on an interval $I\subset \mathbb R$. We denote by $\|\cdot\|_B$ the norm of a Banach space $B$, and if $\mathcal L$ is a bounded linear operator between Banach spaces, we denote its operator norm by $\|L\|$. We denote by $C(E)$ the space of real-valued continuous functions on $E\subset\mathbb R^d$. For any  set $E\subset \mathbb R^d$, any bounded open set $\Omega\subset \mathbb R^d$ and $T\ge0$,  we define the Banach spaces
\begin{align*}
B(E)=&\ \{f: E\to \mathbb R:\text{ $f$ is bounded and measurable}\},\\
C_{\infty}(E)=&\ \{f \in C(E):\, f \text{ vanishes at infinity}\}.\\
C_0[0,T]=&\ \{f\in C[0,T]: f(0)=0\}, \\
C_{\partial\Omega}([0,T]\times \Omega)=&\ \{f \in C([0,T]\times \overline\Omega): f=0\text{ on } [0,T]\times\partial\Omega\},\\
C_{0,\partial\Omega}([0,T]\times \Omega)=&\ \{f \in C_{\partial\Omega}([0,T]\times \Omega): f=0\text{ on } \{0\}\times\overline\Omega\},\\
C_{\infty,\partial\Omega}((-\infty,  T]\times \Omega)=&\ \{f\in C_{\infty}((-\infty, T]\times \overline\Omega): f=0\text{ on } \partial\Omega\},
\label{eq:}
\end{align*}
all equipped with the supremum norm, which we often denote by $\|\cdot\|_\infty$.    For an open set $\Omega\subset\mathbb R^d$ we define
\begin{align*}	
C^k(\Omega)=&\ \{f\in C(\Omega): D^\gamma f\in C(\Omega),\  |\gamma|\le k\},\\
C^\infty(\overline\Omega)=&\ \{f\in C^k(\Omega): D^\gamma f\in C(\overline\Omega),\  |\gamma|\le k, \, k\in\mathbb N\},\\
C_c^\infty(\Omega) =&\ \{f\in C^\infty(\overline\Omega): f\text{ vanishes outside of a compact set }K\subset\Omega \},\\
C^1_0[0,T]=&\ \{f\in C^1[0,T]: f,f'\in C_0[0,T]\},
\end{align*}
where $\gamma$ is a multi-index, $D^\gamma$ the associated integer-order derivative operator. 
We denote by $L^p(\Omega)$, $p\in[1,\infty]$ the usual Banach spaces of Lebesgue $p$-integrable real-valued functions on $\Omega$.  We define $L^p(a,b;B)=\{f:(a,b)\to B\text{ such that }t\mapsto \|f(t)\|_B\in L^p(a,b)\}$, for $p\in[1,\infty]$ and $b>a\ge-\infty$. If $F$ and $\tilde F$ are two sets of real-valued functions, we define $F\tilde F :=\{f\tilde f:\ f\in F,\ \tilde f\in\tilde F\}$, and we denote by $\text{Span}\{F\}$ the set of all linear combination of elements in $F$.

 The notation we use for an $E $-valued stochastic process started at $x\in E $ is $X^x=\{X^x(s)\}_{s\ge0}$. Note that the symbol $t$ will often be used to denote the starting point of a stochastic process with state space $E \subset \mathbb R$. By a \emph{ strongly continuous contraction semigroup} $T$ we mean a collection of linear operators $T_s:B\to B$, $s\ge0$, where $B$ is a Banach space, such that $T_{s+r}=T_sT_r$, for every $s,r\ge 0$, $T_0$ is the identity operator, $\lim_{s\downarrow 0}T_sf=f$ in $B$, for every $f\in B$,  and $\sup_s\|T_s\|\le1$. The generator of the semigroup $T$ is defined as the pair $(\mathcal L,\text{Dom}(\mathcal L))$, where $\text{Dom}(\mathcal L):=\{f\in B: \mathcal L f:=\lim_{s\downarrow0}s^{-1}(T_sf-f) \text{ exists in }B\}$. We say that a set $C\subset \text{Dom}(\mathcal L)$ is a \emph{core for} $(\mathcal L,\text{Dom}(\mathcal L))$ if the generator equals the closure of the restriction of $\mathcal L$ to $C$. We say that a set $C\subset B$ is \emph{invariant under} $T$ if $T_sC\subset C$ for every $s\ge0$. If a set $C$ is invariant under $T$ and a core for  $(\mathcal L,\text{Dom}(\mathcal L))$, then we say that $C$ is an \emph{invariant core for} $(\mathcal L,\text{Dom}(\mathcal L))$. Recall that if $C$ is a dense subspace of $\text{Dom}(\mathcal L)$ and $C$ is invariant under $T$, then $C$ is an invariant core for $(\mathcal L,\text{Dom}(\mathcal L))$ (see \cite[Lemma 1.34]{Schilling}). For a given $\lambda\ge0$ we define the \emph{resolvent of} $T$  by $(\lambda-\mathcal L)^{-1}:=\int_0^\infty e^{-\lambda s}T_s \,ds$, and recall that for $\lambda >0$, $(\lambda-\mathcal L)^{-1}:B\to \text{Dom}(\mathcal L)$ is a bijection and it solves the abstract resolvent equation
\begin{equation*}
\mathcal L(\lambda-\mathcal L)^{-1}f=\lambda (\lambda-\mathcal L)^{-1}f-f,\quad f\in B,
\end{equation*}
 see for example \cite[Theorem 1.1]{Dyn65}. By a \emph{Feller semigroup} we mean a strongly continuous contraction semigroup $T$ on any of the (compactified) Banach spaces of continuous functions defined above such that $T$ preserves non-negative functions. 
 A Feller semigroup $T$ is said to be \emph{conservative} if the  extension of $T$ to bounded measurable functions preserves constants. Feller semigroups are in one-to-one correspondence with Feller processes, where a Feller process is a time-homogenous sub-Markov process $\{X(s)\}_{s\ge0}$ such that $s\mapsto T_sf(x):=\mathbf E[f(X(s))|X(0)=x]$, $f\in B$ is a Feller semigroup \cite[Chapter 1.2]{Schilling}. We recall that every Feller process admits a c\'adl\'ag modification which enjoys the strong Markov property \cite[Theorem 1.19 and Theorem 1.20]{Schilling}, and we always work with such modification. 
For further discussions on these terminologies and notations, we refer to \cite{Schilling}.

\subsection{Nonlocal operators and  related stochastic processes}\label{section2.3}
Next, we review some basics on the nonlocal operators, along with some properties and related definitions.
\begin{description}
\item[(H0)\label{H0}] The function $\rho: \mathbb{R} \times (0,\infty) \to [0,\infty)$ is  continuous and continuously  differentiable in the first variable.    Furthermore,
  \begin{equation}\nonumber
  \int_0^{\infty}(1\wedge r ) \sup_t\rho (t,r)\,dr < \infty, \quad \int_0^{\infty} (1\wedge r ) \sup_t \Big | \partial_t \rho(t,r)\Big |\,dr < \infty,
  \end{equation}
  and
  \begin{equation}\nonumber   \lim_{\delta \to 0} \sup_t \int_{0 < r \le \delta} r \rho(t,r)\,dr = 0.
   \end{equation}
Moreover, there exist  $\epsilon> 0$ and  $\gamma > 0$, such that  the function $\rho$ satisfies $ \rho (t,r) \ge \gamma > 0$ for all $t$ and $|r| < \epsilon$.
\end{description}

\begin{definition}
For any kernel function $\rho$ satisfying condition \ref{H0},  the \textit{ Marchaud-type derivative} $D_{\infty}^{(\rho)}$ and the \textit{ Caputo-type derivative}  $D^{(\rho)}_0$ are respectively defined  by
\begin{align}
  D_{\infty}^{(\rho)} u(t)   &:= \int_0^{\infty} ( u(t) - u(t-r)) \rho(t,r)\,dr, & t\in(-\infty,T],\label{genLT}\\
D^{(\rho)}_0 u(t)   &:= \int_0^{t} ( u(t) - u(t-r)) \rho(t,r)\,dr + (u(t) - u(0)) \int_{t}^{\infty}\rho(t,r)\,dr,& t\in(0,T], \label{genC}
\end{align}
and $D^{(\rho)}_0 u(0):=\lim_{t\downarrow 0}D^{(\rho)}_0 u(t)$.
\end{definition}
\begin{remark}
The operator $D_{\infty}^{(\rho)}$ can be seen as the left-sided generalization of the \textit{Marchaud derivative} \cite[eq. (5.57) and (5.58)]{samko}. This operator is also known as the \textit{generator form of fractional derivatives}  \cite{KV0, Meerschaert2012}, or a L\'evy-type generator \cite{Schilling}.
\end{remark}
\begin{example}
 We mention some concrete and popular examples of the nonlocal operators.
\begin{enumerate}[(i)]
\item By setting $\rho(t,r) = -r^{-\alpha-1}/\Gamma(-\alpha)$ with $\alpha\in(0,1)$, the nonlocal operator $D^{(\rho)}_0$ reproduces the Caputo fractional derivative \cite{kai}, and $  D_{\infty}^{(\rho)}$ the Marchaud fractional derivative \cite{samko}.
\item The operator $\mathcal G_\delta$, defined in \cite[formula (1.2)]{DYZ17}, is a special case of the Marchaud-type derivative $D_{\infty}^{(\rho)}$ with a time-independent and compactly supported kernel function.
\item Other particular cases include the  fractional derivatives of variable  order, which are obtained by taking $\rho$ as the function $\rho(t,r) = -r^{-1-\alpha(t)}/\Gamma(-\alpha(t))$
with a suitable function $\alpha(t) : \mathbb{R} \to (0,1)$ \cite{KV-1}, and
tempered L\'evy kernels  $\rho(t,r)=\, -e^{-\lambda r}r^{-1-\alpha}/\Gamma(-\alpha)$, $ \alpha \in (0,1),\, \lambda>0$,  \cite{Meetemp, WyTemp}.

\end{enumerate}
\end{example}

\begin{remark}\label{ch_1_rmk_MC_regularity2}
The nonlocal derivatives $-D^{(\rho)}_\infty$ and $-D^{(\rho)}_0$ have a clear probabilistic interpretation. The former tells us that the process at $t$ makes a negative jump of size $|r|$ with intensity $\rho(t,r)$. The latter tells us that, as long as the  jump does not cross $0$, the process jumps from $t$ to $t-r$ with intensity $\rho(t,r)$. Otherwise, it gets killed with rate/intensity $\int_t^\infty\rho(t,r)\,dr$ and regenerated at $0$ with the same rate, where it remains absorbed. This will be made rigorous in Definition \ref{3processes} and Proposition \ref{3processesresults}.
\end{remark}

\subsection{Probabilistic interpretation and preliminary results}
In this section, we discuss three stochastic processes generated by the operators defined in \eqref{genLT} and \eqref{genC}
with kernel functions satisfying \ref{H0}. 
\begin{definition}\label{3processes} Assume \ref{H0}.
\begin{enumerate}[(i)]
	\item \cite[Theorem 5.1.1]{KV0}: Let $T^{(\rho),\infty}=\{T^{(\rho),\infty}_s\}_{s\ge0}$ be the Feller  semigroup on $C_\infty(-\infty,T]$ with the generator
		$$\left(\mathcal L_\infty^{(\rho)}, \text{Dom}(\mathcal L_\infty^{(\rho)})\right)\text{ being the closure of }\left(-D_{\infty}^{(\rho)},C^1_{\infty}(-\infty,T]\right),
		$$
		and recall that $C^1_{\infty}(-\infty,T]$ is invariant under $T^{(\rho),\infty}$. \\
			We denote the induced Feller process by 
		$$
		-X^{t,(\rho)} =\{-X^{t,(\rho)}  (s)\}_{s \ge 0}, \quad t\in (-\infty,T].
		$$

\item \cite[Theorem 4.1]{KVFDE}: Let $T^{(\rho)}=\{T^{(\rho)}_s\}_{s\ge0}$ be the Feller  semigroup  on $C[0,T]$ with the generator 
	$$
	\left(\mathcal L^{(\rho)}, \text{Dom}(\mathcal L^{(\rho)})\right)\text{ being the closure of }\left(-D_{0}^{(\rho)},C^1[0,T]\right),
	$$ 
	and recall that $C^1[0,T]$ is invariant under $T^{(\rho)}$.\\
		We denote the induced Feller process by $-X^{t,(\rho)}_{0}=\{-X^{t,(\rho)} (s)\mathbf 1_{\{s<\tau_0(t)\}}\}_{s \ge 0}$, $t\in[0,T]$.
	
\item We denote by  $T^{(\rho),\text{kill}}=\{T^{(\rho)}_s\}_{s\ge0}$  the Feller  semigroup on $C_0[0,T]$ with the generator
	$$
	\left(\mathcal L_{\text{kill}}^{(\rho)}, \text{Dom}(\mathcal L_{\text{kill}}^{(\rho)})\right)\text{ being the closure of }\left(-D_{0}^{(\rho)},C^1_0[0,T]\right),
	$$ 
and recall that $C^1[0,T]$ is invariant under $T^{(\rho),\text{kill}}$.\\ 
 We denote the induced Feller process by $-X^{t,(\rho),\text{kill}}_{0}=\{-X^{t,(\rho),\text{kill}}_0 (s)\}_{s \ge 0}$, $t\in(0,T]$.

\end{enumerate}
\end{definition}

	\begin{remark}
The next proposition justifies the notation for the stochastic processes $-X^{t,(\rho)}_0$ and  Definition \ref{3processes}-(iii). The proof of parts (i), (ii) and (iii) is given in \cite[Proposition 2.7]{HKT17}, and hence omitted here. Part (iv) can be proved by the same argument for  Lemma \ref{lem32}.
		\end{remark}

\begin{proposition}\label{3processesresults}
 \begin{enumerate}[(i)]
	\item  The processes $-X^{t,(\rho)}$, $-X^{t,(\rho)}_0$ and $-X^{t,(\rho),\text{kill}}_0$ are non-increasing and  
	$$
	\mathbf P[-X^{t,(\rho)}(s)\in(a,b)]=\mathbf P[-X^{t,(\rho)}_0(s)\in(a,b)]=\mathbf P[-X^{t,(\rho),\text{kill}}_0(s)\in(a,b)], $$
	for every $t\in(0,T]$, $0<a<b\le T$, $s> 0$. In particular $\mathbf P[-X^{t,(\rho)}_0 (s)\in \{0\}]=\mathbf P[-X^{t,(\rho)} (s)\le0],$ for every $ t \in [0,T]$, $s> 0$.
	\item The   law of $$\tau_0 (t):=\inf\{s > 0:-X^{t,(\rho)}(s)\le 0\},\quad t\in (-\infty, T],
	$$
	equals the law of the first exit time from the interval $(0,T]$ of the processes $-X^{t,(\rho)}_0$  for each $t\in (0, T]$ (so that we will use indistinctly the same notation  $\tau_0  (t)$).
	\item The expectation of $\tau_0 (t)$ is uniformly bounded, i.e.,  $\sup_{t\in[0,T]}\mathbf E[\tau_0 (t)]<\infty$.
	\item 		 It holds that
$
	\left(\mathcal L_{\text{kill}}^{(\rho)}, \text{Dom}(\mathcal L_{\text{kill}}^{(\rho)})\right)= \left(\mathcal L^{(\rho)}, \text{Dom}(\mathcal L^{(\rho)}\right)\cap\{f(0)=0\}).
	$
		\end{enumerate}
\end{proposition}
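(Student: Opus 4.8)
The plan is to take parts (i)--(iii) from \cite[Proposition~2.7]{HKT17} and give a self-contained argument only for (iv), run exactly as for Lemma~\ref{lem32}. For orientation: (i) reflects that $-D^{(\rho)}_\infty$ and $-D^{(\rho)}_0$ are downward L\'evy-type generators, so all three induced processes have only left jumps and are hence non-increasing, and their laws must agree on any interval $(a,b)$ with $a>0$ since the processes can be coupled to coincide up to the first time they reach $0$; (ii) is then the identification of $\tau_0(t)=\inf\{s>0:-X^{t,(\rho)}(s)\le0\}$ with the first exit time of $-X^{t,(\rho)}_0$ from $(0,T]$; and (iii) uses the lower bound $\rho(t,r)\ge\gamma$ for $|r|<\epsilon$ in \ref{H0}, which lets one dominate $\tau_0(t)$ by the hitting time of $0$ of a compound Poisson process that always jumps strictly left at a rate bounded below uniformly in $t$, whose expected hitting time of $0$ from any point of $[0,T]$ is finite and independent of $t$.

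For (iv) I would first record that $0$ is absorbing for $-X^{t,(\rho)}_0$: taking $t=0$ in (i) and using that $-X^{0,(\rho)}$ is non-increasing and starts at $0$ gives $\mathbf P[-X^{0,(\rho)}_0(s)=0]=\mathbf P[-X^{0,(\rho)}(s)\le0]=1$ for every $s>0$. (One can also see this directly from \ref{H0}: for $g\in C^1[0,T]$, writing $D^{(\rho)}_0 g(t)=\int_0^\infty(g(t)-g((t-r)\vee0))\rho(t,r)\,dr$ and using $|g(t)-g((t-r)\vee0)|\le\|g'\|_\infty(t\wedge r)$ together with the three integrability conditions of \ref{H0} shows $D^{(\rho)}_0 g(0)=\lim_{t\downarrow0}D^{(\rho)}_0 g(t)=0$; since $C^1[0,T]$ is a core for $\mathcal L^{(\rho)}$ and evaluation at $0$ is continuous, $\mathcal L^{(\rho)}h(0)=0$ for all $h\in\text{Dom}(\mathcal L^{(\rho)})$, hence $\tfrac{d}{ds}(T^{(\rho)}_sf)(0)=(\mathcal L^{(\rho)}T^{(\rho)}_sf)(0)=0$ for all $f$.) In particular the closed subspace $C_0[0,T]$ is invariant under $T^{(\rho)}$.

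Next I would invoke the elementary fact that if $C$ is a closed subspace of a Banach space $B$ invariant under a strongly continuous contraction semigroup $T$ with generator $(\mathcal L,\text{Dom}(\mathcal L))$, then $T|_C$ is a strongly continuous contraction semigroup on $C$ whose generator is exactly $(\mathcal L,\text{Dom}(\mathcal L)\cap C)$: for $f\in\text{Dom}(\mathcal L)\cap C$ the difference quotients lie in $C$ and converge there (as $C$ is closed), and convergence in $C$ is convergence in $B$. Applying this with $B=C[0,T]$, $T=T^{(\rho)}$ and $C=C_0[0,T]$ gives a Feller semigroup $T^{(\rho)}|_{C_0[0,T]}$ on $C_0[0,T]$ (contractivity and positivity are inherited) with generator $(\mathcal L^{(\rho)},\text{Dom}(\mathcal L^{(\rho)})\cap\{f(0)=0\})$. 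It remains to identify this generator with $\mathcal L_{\text{kill}}^{(\rho)}$, i.e.\ with the closure of $(-D^{(\rho)}_0,C^1_0[0,T])$ in $C_0[0,T]$. Since $C^1_0[0,T]=C^1[0,T]\cap C_0[0,T]$ is the intersection of two subspaces invariant under $T^{(\rho)}$ (the first by Definition~\ref{3processes}, the second by the previous paragraph), it is invariant under $T^{(\rho)}|_{C_0[0,T]}$; it is dense in $C_0[0,T]$ and contained in $\text{Dom}(\mathcal L^{(\rho)})$, so by \cite[Lemma~1.34]{Schilling} it is a core for the generator of $T^{(\rho)}|_{C_0[0,T]}$, on which that generator acts as $\mathcal L^{(\rho)}=-D^{(\rho)}_0$. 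Thus $\mathcal L_{\text{kill}}^{(\rho)}$ and $(\mathcal L^{(\rho)},\text{Dom}(\mathcal L^{(\rho)})\cap\{f(0)=0\})$ are two closed operators agreeing on the common core $C^1_0[0,T]$, hence equal, which is (iv); equivalently, $T^{(\rho),\text{kill}}=T^{(\rho)}|_{C_0[0,T]}$.

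The main obstacle is the invariance of $C_0[0,T]$, equivalently the absorption of $-X^{t,(\rho)}_0$ at $0$. If one wants it directly from \ref{H0}, the delicate point is the vanishing of $D^{(\rho)}_0 g(0)$, because the ``far'' contribution $\int_t^\infty\rho(t,r)\,dr$ can blow up like $t^{-1}$ as $t\downarrow0$ and is only tamed by the factor $g(t)-g(0)=O(t)$ together with the third condition $\lim_{\delta\to0}\sup_t\int_{0<r\le\delta}r\rho(t,r)\,dr=0$ in \ref{H0}. Everything after that is routine manipulation of closed invariant subspaces and cores.
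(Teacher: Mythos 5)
Your proposal is correct, and for parts (i)--(iii) it coincides with the paper, which likewise simply cites \cite[Proposition 2.7]{HKT17}. For part (iv), however, you take a genuinely different route. The paper proves (iv) ``by the same argument as Lemma \ref{lem32}'': one starts from the observation that $T^{(\rho),\text{kill}}_s=T^{(\rho)}_s$ on $C_0[0,T]$, which gives $\text{Dom}(\mathcal L^{(\rho)}_{\text{kill}})\subset\text{Dom}(\mathcal L^{(\rho)})$ with matching action, and obtains the reverse inclusion through the resolvent representation: writing $u=(\lambda-\mathcal L^{(\rho)})^{-1}g$, one has $u-u(0)=\int_0^\infty e^{-\lambda s}T^{(\rho),\text{kill}}_s(g-g(0))\,ds\in\text{Dom}(\mathcal L^{(\rho)}_{\text{kill}})$. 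You avoid resolvents altogether: you first prove that $0$ is absorbing, i.e.\ that $C_0[0,T]$ is invariant under $T^{(\rho)}$ (either from the ``in particular'' clause of (i) at $t=0$, or analytically from \ref{H0} via $|g(t)-g((t-r)\vee 0)|\le\|g'\|_\infty(t\wedge r)$, which does correctly tame the delicate term $(g(t)-g(0))\int_t^\infty\rho(t,r)\,dr$), then use the restriction fact that the generator of $T^{(\rho)}|_{C_0[0,T]}$ is exactly $\left(\mathcal L^{(\rho)},\text{Dom}(\mathcal L^{(\rho)})\cap\{f(0)=0\}\right)$, and finally identify this with $\mathcal L^{(\rho)}_{\text{kill}}$ by the invariant-core lemma \cite[Lemma 1.34]{Schilling} applied to $C^1_0[0,T]$. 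Both arguments ultimately rest on the same key fact, namely that the killed and absorbed semigroups agree on functions vanishing at $0$; the paper treats this as an immediate probabilistic observation and then gets (iv) cheaply by reusing the Lemma \ref{lem32} template, whereas your version proves the absorption property from the assumptions and, as a by-product, constructs the killed semigroup as a restriction, thereby also substantiating the existence claim implicit in Definition \ref{3processes}-(iii) (cf.\ Remark \ref{rem:X0}-(ii)). The only cosmetic caveat is that $\tfrac{d}{ds}T^{(\rho)}_sf(0)=0$ holds first for $f\in\text{Dom}(\mathcal L^{(\rho)})$ and extends to all $f\in C[0,T]$ by density and contractivity, a step worth stating explicitly; otherwise your argument is complete.
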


\begin{remark}\label{rem:X0}
\begin{enumerate}[(i)]
\item
	It follows from Proposition \ref{3processesresults} that the process  $-X^{t,(\rho)}_0$ is obtained by absorbing at the point $0$ the process $-X^{t,(\rho)}$  on its first attempt to leave the interval $(0,T]$.
	\item Definition \ref{3processes}-(ii) (Definition \ref{3processes}-(iii)) could be a proposition derived from absorbing  at $0$ (killing on crossing $0$) the process $-X^{t,(\rho)}$, $t>0$.
\item If the L\'evy kernel is independent of $t$, i.e. $\rho(t,r)=\rho(r)$, then $-X^{t,(\rho)}(s)=t-X^{(\rho)}(s)$ is the decreasing L\'evy process with generator $-D_{\infty}^{(\rho)}$ acting on $C_c^\infty(\mathbb R)$, where $X^{(\rho)}$ is the subordinator with L\'evy measure $\rho(r)dr$. This is a consequence of the fact that $\mathcal L_\infty^{(\rho)}=-D_{\infty}^{(\rho)}$ on  $C_c^\infty(\mathbb R)\subset \text{Dom}(\mathcal L_\infty^{(\rho)})$, and \cite[Theorem 2.7]{Schilling}.
\item If the kernel  $\rho(t,r)=\rho(r)$ is integrable, then $-D_{\infty}^{(\rho)}$ is the generator of a decreasing compound Poisson process.
	\end{enumerate}
\end{remark}

\begin{remark}\label{rem:H0-alter}
The assumption \ref{H0} could be replaced with an alternative one, as long as $-D^{(\rho)}_\infty$ generates a non-increasing  Feller process with the first exit times from $(0,T]$ having finite expectation, along with the existence of invariant cores with the properties in Definition \ref{3processes}. Nevertheless, the assumption \ref{H0} provides a satisfactory level of generality for most of the applications we have in mind.
\end{remark}

Finally, we use one more assumption on the stochastic process $X^{t,(\rho)}$.

\begin{description}
\item[(H1)\label{H1}] The law of   $-X^{t,(\rho)} (s)$ is absolutely continuous with respect to Lebesgue measure for each $t\in [0,T],$ $ s>0$, and we denote such density by $p_s^{(\rho)}(t)$. Furthermore assume that $\mathbf P[-X^{t,(\rho)}(\tau_0 (t))\in \{0\}]=0$, for each $t\in(0,T]$.
\end{description}

\begin{remark}\label{rem:H12}
Assumption \ref{H1} ensures the existence of the probability density function $p_s^{(\rho)}(t)$, which helps us  handle the weak problem data (see Theorem \ref{lem_sdoggs}-(ii)).
Otherwise,  without  \ref{H1}, we could assume that the problem data $g$ in Theorem \ref{lem_sdoggs}-(ii) is a Baire class 1 function (Remark \ref{rmk:noH1}). This
allows us to handle several cases, such as $\rho$ being integrable \cite[Remark 27.3]{Sato}.
\end{remark}

\begin{remark}
Assumption $\mathbf P[-X^{t,(\rho)}(\tau_0 (t))\in \{0\}]=0$ is implied by the existence of a density $p_s^{(\rho)}(t)$ if $\rho(t,r)dr=\rho(dr)$. This is because  the existence of a density implies that $\rho((0,\infty))=\infty$, as $X^{(\rho)}$ cannot be a compound Poisson process. Then $\tau_0(t)=\inf\{s>0:X^{(\rho)}(s)>t \}$, the right inverse of $X^{(\rho)}$, and one can apply \cite[III, Theorem 4]{bertoin}. Here $X^{(\rho)}$ is the increasing subordinator with L\'evy measure $\rho(dr)$.  
\end{remark}
\begin{example}
We list some  examples where the densities $p_s^{(\rho)}(t),$ $t,s>0$, exist:
\begin{enumerate}[(i)]
\item kernels $\rho(t,r)dr=\rho(dr)$ and $\rho(dr)\ge r^{-1-\alpha}dr$ for all small $r$ \cite[Proposition 28.3]{Sato};
\item kernels $\rho(t,r)=\rho(r)$, $\int_0^\infty \rho(r)\,dr=\infty$ \cite[Theorem 27.7]{Sato};
\item kernels $\rho(t,r)$ such that the respective symbols satisfies the H\"older continuity-type conditions  in \cite[Theorem 2.14]{Levymatters6};
\item see \cite{four02} for another set of assumptions for kernels of the type $\rho(t,r)=p(t)q(r)$ and a literature discussion.
\end{enumerate}

\end{example}

\subsection{The spatial operator $\Delta$}
\begin{definition}
For a bounded open set  $\Omega\subset \mathbb R^d$ we say that  $z\in\partial \Omega$ \emph{is a regular point for }$\Omega$, if there exists a right circular finite cone with vertex at $z$, denoted by $V_z$,  such that $V_z\subset \Omega^c$.  We say a bounded open set  $\Omega\subset \mathbb R^d$ \emph{is regular} if every  $z\in\partial \Omega$ is a regular point for $\Omega$.
\end{definition}
\begin{remark}
 From now on, we always assume that $\Omega\subset \mathbb R^d$ is a regular set. In particular, every Lipschitz domain is regular.
\end{remark}
\begin{definition}\label{1process} Let $\Omega\subset\mathbb R^d$ be a regular set. Let $(\Delta_\Omega, \text{Dom}(\Delta_\Omega ))$ be the generator of the Feller semigroup  $T^\Omega=\{T^\Omega_s\}_{\ge0}$ on $ C_{\partial\Omega}(\Omega)$, where $T_s^\Omega f(x):=\mathbf E[f(B^x(s))\mathbf 1_{\{s<\tau_\Omega(x)\}}]$, $s\ge 0$, $x\in \overline \Omega$, with $B^x(s)=x+B(2s)$, $s\ge0$, $x\in\Omega$, $\{B(s)\}_{s\ge 0}$ being the standard $d$-dimensional Brownian motion, and define the first exit times 
$$\tau_\Omega(x):=\inf\{s>0:B^x(s)\notin \Omega\},\quad x\in\Omega.$$
\end{definition}

\begin{remark}
Recall that $\text{Dom}(\Delta_\Omega)=\{f\in C_{\partial\Omega}(\Omega)\cap C^2(\Omega):\Delta f\in C_{\partial\Omega}(\Omega)\}$
(see, e.g., \cite[Theorem 2.3]{MeerB}). We write $\Delta_\Omega=\Delta$ from now on.
We denote the law of  $B^x(s)\mathbf 1_{\{s<\tau_\Omega(x)\}}$ by $p^\Omega_s(x,y)dy$, recalling that $(x,y)\mapsto p^\Omega_s(x,y)$ is continuous  for each $s>0$.
\end{remark}

\begin{remark}
The arguments in Section \ref{sec:generaltheory} could be extended to the case where  the Laplacian $\Delta$ is replaced by an
operator whose semigroup on $C_{\partial\Omega}(\Omega)$ allows a density function $p^\Omega_s(x,y)$ with respect to Lebesgue measure for positive time (i.e. the respective version of the first part of assumption \ref{H1}). The restricted fractional Laplacian is an example of such operator (see, e.g., \cite{Bogdan,BonVas16}).
\end{remark}

\subsection{The inhomogeneous Caputo-type evolution equation}
In order to study the stochastic representation of solution of problem (\ref{preRL}), we consider the following equivalent form
\begin{equation}\label{postRL}
\left\{
\begin{split}
(-D^{(\rho)}_0+\Delta) u(t,x)&=-g(t,x), & \text{in }&(0,T]\times  \Omega,\\
 u(t,x)&=0, & \text{in }&(0,T]\times \partial\Omega, \\
 u(0,x)&=\phi(0,x), & \text{in }&  \Omega,
\end{split}
\right.
\end{equation}
with  the forcing term $g=f+f_\phi$, where we define
\[
 f_\phi(t,x):=\int_{t}^\infty (\phi(t-r,x)-\phi(t,x))\rho(t,r)\,dr, \quad \text{in }(0,T]\times  \Omega.
\]
Notice that  $f_\phi=-D_{\infty}^{(\rho)}\phi$, for $\phi$ extended to $\phi(0)$ on $(0,T]\times
\overline\Omega$, and $D^{(\rho)}_\infty u = D^{(\rho)}_0u-f_\phi$ for any smooth $u$ such that $u=\phi$ on $(-\infty,0]$.
In the following section, we shall discuss the probabilistic representation of the solution to \eqref{preRL} with the help of
the reformulation \eqref{postRL}, provided certain hypothesis on problem data. Let us mention that versions of the Caputo-type problem \eqref{postRL} have also been studied in \cite{Chen17, CK18, DYZ17, HKT17}.

\section{General theory}\label{sec:generaltheory}

In this part, we study the solution theory of the nonlocal problem. To this end, we begin with the study on some  time-space compound semigroups which are constructed using temporal semigroups and spatial ones. This allows us to treat the Caputo-type EE \eqref{postRL} as an elliptic boundary value problem.

\subsection{Time-space compound semigroups}
The next lemma shows that $\{T^{(\rho)}_sT^\Omega_s\}_{s\ge 0}$ is a well-defined Feller semigroup on $C_{\partial\Omega}([0,T]\times\Omega)$ such that its generator is the closure of $-D^{(\rho)}_0+\Delta$.
\begin{lemma}\label{lem:31}
With the notation of Definition \ref{3processes} and Definition \ref{1process}, the operators
$$T^{(\rho),\Omega}:=\{T^{(\rho)}_sT^\Omega_s\}_{s\ge 0}$$
form a Feller semigroup on $C_{\partial\Omega}([0,T]\times\Omega)$, whose generator $(\mathcal L_\Omega^{(\rho)},\text{Dom}(\mathcal L^{(\rho)}_\Omega))$ is the closure of
$$
\left(-D^{(\rho)}_0+\Delta,\ \text{Span}\left\{C^1[0,T]\cdot \text{Dom}(\Delta_\Omega)\right\} \right)\quad\text{in}\quad C_{\partial\Omega}([0,T]\times\Omega),
$$
where $T^{(\rho)}$ and $-D^{(\rho)}_0$ act on the $[0,T]$-variable, and $T^\Omega$ and $\Delta$ act on the $\Omega$-variable.
\end{lemma}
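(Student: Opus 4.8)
The plan is to build the compound semigroup by tensoring the temporal and spatial Feller semigroups, then identify its generator on the product core via a Trotter/commutation argument. First I would check that $T^{(\rho),\Omega}_s := T^{(\rho)}_s T^\Omega_s$ is a strongly continuous contraction semigroup on $C_{\partial\Omega}([0,T]\times\Omega)$. The semigroup property $T^{(\rho),\Omega}_{s+r} = T^{(\rho),\Omega}_s T^{(\rho),\Omega}_r$ follows because $T^{(\rho)}$ acts only on the $t$-variable and $T^\Omega$ only on the $x$-variable, so the two families commute when applied to functions of the product variable; combined with the individual semigroup laws this gives the claim. Contractivity is immediate from $\|T^{(\rho)}_s\|\le 1$ and $\|T^\Omega_s\|\le 1$, and positivity-preservation is inherited from both factors, so $T^{(\rho),\Omega}$ is a Feller semigroup once strong continuity is established. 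For strong continuity I would first verify it on the dense subspace $\mathrm{Span}\{C^1[0,T]\cdot\mathrm{Dom}(\Delta_\Omega)\}$ (or even on products $f\otimes g$ of continuous functions), using $\|T^{(\rho)}_sT^\Omega_s(f\otimes g) - f\otimes g\|_\infty \le \|T^{(\rho)}_s f\|_\infty\,\|T^\Omega_s g - g\|_\infty + \|g\|_\infty\,\|T^{(\rho)}_s f - f\|_\infty$, and then extend by the uniform boundedness of $T^{(\rho),\Omega}_s$ and density (Stone–Weierstrass gives that such products span a dense subset of $C_{\partial\Omega}([0,T]\times\Omega)$, after noting the boundary and continuity constraints are respected).

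Next I would compute the generator on the product core $\mathcal{C} := \mathrm{Span}\{C^1[0,T]\cdot\mathrm{Dom}(\Delta_\Omega)\}$. For $u = f\otimes g$ with $f\in C^1[0,T]$ and $g\in\mathrm{Dom}(\Delta_\Omega)$, write the standard difference-quotient decomposition
\begin{equation*}
\frac{T^{(\rho)}_sT^\Omega_s u - u}{s} = T^{(\rho)}_s\left(\frac{T^\Omega_s u - u}{s}\right) + \frac{T^{(\rho)}_s u - u}{s}.
\end{equation*}
Since $g\in\mathrm{Dom}(\Delta_\Omega)$, $s^{-1}(T^\Omega_s u - u)\to f\otimes \Delta g$ in $C_{\partial\Omega}([0,T]\times\Omega)$, and $T^{(\rho)}_s$ is a contraction converging strongly to the identity, the first term converges to $f\otimes\Delta g = \Delta u$; since $f\in C^1[0,T]\subset\mathrm{Dom}(\mathcal L^{(\rho)})$, the second term converges to $(-D^{(\rho)}_0 f)\otimes g = -D^{(\rho)}_0 u$. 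Hence $\mathcal{C}\subset\mathrm{Dom}(\mathcal L^{(\rho)}_\Omega)$ and $\mathcal L^{(\rho)}_\Omega u = (-D^{(\rho)}_0 + \Delta)u$ on $\mathcal{C}$, extending by linearity to all of $\mathcal{C}$.

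It then remains to show $\mathcal{C}$ is a core. By \cite[Lemma 1.34]{Schilling}, it suffices to show $\mathcal{C}$ is dense in $C_{\partial\Omega}([0,T]\times\Omega)$ and invariant under $T^{(\rho),\Omega}$. Density is the Stone–Weierstrass argument above (using that $C^1[0,T]$ is dense in $C[0,T]$ and $\mathrm{Dom}(\Delta_\Omega)$ is dense in $C_{\partial\Omega}(\Omega)$). Invariance is the key structural point: for $u = f\otimes g$, $T^{(\rho),\Omega}_s u = (T^{(\rho)}_s f)\otimes(T^\Omega_s g)$, and by Definition \ref{3processes}-(ii) the space $C^1[0,T]$ is invariant under $T^{(\rho)}$, while by Definition \ref{1process} (and the invariance of $\mathrm{Dom}(\Delta_\Omega)$ under its own semigroup, a general property of generator domains) $\mathrm{Dom}(\Delta_\Omega)$ is invariant under $T^\Omega$; hence $T^{(\rho),\Omega}_s u \in \mathcal{C}$. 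I expect the main obstacle to be the bookkeeping around the product/tensor structure on the constrained function space $C_{\partial\Omega}([0,T]\times\Omega)$ — in particular making the Stone–Weierstrass density argument respect both the vanishing-on-$\partial\Omega$ condition and the joint continuity, and confirming that the product core is genuinely contained in $\mathrm{Dom}(\Delta_\Omega)\otimes C^1[0,T]$ in a way compatible with both one-variable semigroups. Once the commutation $T^{(\rho)}_sT^\Omega_s = T^\Omega_sT^{(\rho)}_s$ on products is in hand (which is essentially a consequence of Fubini, since the two processes act on independent coordinates), the rest is routine semigroup theory.
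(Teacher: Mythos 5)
Your proposal follows essentially the same route as the paper's proof: the Feller property of $T^{(\rho),\Omega}$ from commutation and contractivity of the two factors, identification of the generator as $-D^{(\rho)}_0+\Delta$ on the product span by splitting the difference quotient (your splitting $s^{-1}T^{(\rho)}_s(T^\Omega_s u-u)+s^{-1}(T^{(\rho)}_su-u)$ is equivalent to the paper's three-term triangle-inequality estimate), and the core conclusion via invariance, density and \cite[Lemma 1.34]{Schilling}. Your invariance argument (invariance of $C^1[0,T]$ under $T^{(\rho)}$ from Definition \ref{3processes}-(ii), and of $\text{Dom}(\Delta_\Omega)$ under $T^\Omega$ as a generator domain) is exactly what the paper uses.

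The one step you leave open, and explicitly flag, is the density of $\text{Span}\{C^1[0,T]\cdot \text{Dom}(\Delta_\Omega)\}$ in $C_{\partial\Omega}([0,T]\times\Omega)$; as written, ``Stone--Weierstrass \ldots after noting the boundary constraints are respected'' does not go through directly. You cannot apply Stone--Weierstrass to $\text{Span}\{C^1[0,T]\cdot \text{Dom}(\Delta_\Omega)\}$ itself, since $\text{Dom}(\Delta_\Omega)$ is not closed under products (for $q_1,q_2\in\text{Dom}(\Delta_\Omega)$, $\Delta(q_1q_2)$ contains the term $2\nabla q_1\cdot\nabla q_2$, which need not belong to $C_{\partial\Omega}(\Omega)$), so this span is not a subalgebra; and applying the compact-space Stone--Weierstrass to a genuine algebra such as $\text{Span}\{C^\infty[0,T]\cdot C^\infty(\overline\Omega)\}$ yields approximants that do not vanish on $[0,T]\times\partial\Omega$. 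The paper resolves this with a two-step argument: first approximate $f\in C_{\partial\Omega}([0,T]\times\Omega)$ by $f_n\in \text{Span}\{C^\infty[0,T]\cdot C^\infty(\overline\Omega)\}$ (Stone--Weierstrass on the compact space $[0,T]\times\overline\Omega$), then multiply by cutoffs $1_{K_n}\in C_c^\infty(\Omega)$ along a compact exhaustion $K_n\uparrow\Omega$, using that $f$ vanishes on $[0,T]\times\partial\Omega$ so that the error outside $K_{n+1}$ tends to zero; the resulting $1_{K_n}f_n$ lie in $\text{Span}\{C^\infty[0,T]\cdot C_c^\infty(\Omega)\}\subset \text{Span}\{C^1[0,T]\cdot \text{Dom}(\Delta_\Omega)\}$. (Alternatively, a locally compact Stone--Weierstrass applied to the algebra $\text{Span}\{C[0,T]\cdot C_{\partial\Omega}(\Omega)\}$, followed by approximating each factor by elements of $C^1[0,T]$ and $\text{Dom}(\Delta_\Omega)$, would also work, but some such argument must be supplied.) With that density step filled in, your proof is complete and coincides with the paper's.
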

\begin{proof}
It is straightforward to show that $T^{(\rho),\Omega}$ is a Feller semigroup by observing
that
$$T^{(\rho)}_sT^\Omega_r=T^\Omega_rT^{(\rho)}_s,\qquad \text{for every} ~~ s,r\ge 0,$$
and the contraction property
\[
\|T^\Omega_s f\|_{C([0,T]\times\overline\Omega)}\le \|f\|_{C([0,T]\times\overline\Omega)} \quad \text{and} \quad \|T^{(\rho)}_s f\|_{C([0,T]\times\overline\Omega)}\le \|f\|_{C([0,T]\times\overline\Omega)},
\]
holds for every $f\in C_{0,\partial\Omega}([0,T]\times\Omega)$, $s\ge0$. We denote the generator of $T^{(\rho),\Omega}$ by $(\mathcal L_{\Omega}^{(\rho)},\text{Dom}(\mathcal L_{\Omega}^{(\rho)}))$. Let $f=pq $, where $p\in C^1[0,T]$ and $q\in\text{Dom}(\Delta_\Omega) $. Then $\mathcal L^{(\rho)}p=-D^{(\rho)}_0p$ from Definition \ref{3processes}-(ii), and by a standard  triangle inequality argument, we obtain
\begin{align*}
\Big|\frac{T^{(\rho)}_hT^\Omega_h f(t,x)-f(t,x)}{h}&-(-D^{(\rho)}_0 +\Delta )f(t,x)\Big|\\
\le&\ \|p\|_{C[0,T]}\left\|\frac{T^\Omega_hq-q}{h} -\Delta q\right\|_{C(\overline\Omega)} + \|\Delta q\|_{C(\overline\Omega)}\left\| T^{(\rho)}_h p-p\right\|_{C[0,T]} \\
&\ +\|q\|_{C(\overline\Omega)}\left\| \frac{T^{(\rho)}_h p-p}{h}+D^{(\rho)}_0p\right\|_{C[0,T]}\to 0,
\end{align*}
as $h \downarrow 0$. As a result, $\mathcal L_{\Omega}^{(\rho)}= (-D^{(\rho)}_0 +\Delta)$ on $\text{Span} \{C^1[0,T]\cdot \text{Dom}(\Delta_\Omega) \}\subset  \text{Dom}(\mathcal L_{\Omega}^{(\rho)})$.

Next, we aim to show that $\text{Span} \{C^1[0,T]\cdot \text{Dom}(\Delta_\Omega)\}$ is dense in $ C_{\partial\Omega}([0,T]\times\Omega)$. It is enough to show that $\text{Span} \{C^\infty[0,T]\cdot C^\infty_c(\Omega)\}$ is dense  in $ C_{\partial\Omega}([0,T]\times\Omega)$ by the inclusion
$$\text{Span} \{C^\infty[0,T]\cdot C^\infty_c(\Omega)\}\subset \text{Span} \{C^1[0,T]\cdot \text{Dom}(\Delta_\Omega) \}.$$

To this end, we notice that $\text{Span} \{C^\infty[0,T]\cdot C^\infty(\overline\Omega)\}$ is a sub-algebra of $C([0,T]\times\overline\Omega)$
that contains constant functions and separates points. Hence  $\text{Span} \{C^\infty[0,T]\cdot C^\infty(\overline\Omega)\}$ is dense in $C([0,T]\times\overline\Omega)$
by Stone-Weierstrass Theorem for compact  Hausdorff spaces.
Then for   $f\in  C_{\partial\Omega}([0,T]\times\Omega)$ we take a sequence
$\{f_n\}_{n\in\mathbb N}\subset \text{Span} \{C^\infty[0,T]\cdot C^\infty(\overline\Omega)\} $
such that $f_n\to f$. Pick functions $\{ 1_{K_n}\}_{n\in\mathbb N}\subset C_c^\infty(\Omega)$
such  that $ 0\le 1_{K_n}\le 1 $, $  1_{K_n}(x)=1$ for  $x\in K_n$, and $  1_{K_n}(x)=0$ for  $x\in\Omega\backslash K_{n+1}$,
where $K_n$ is compact and $K_n\subset K_{n+1}\subset \Omega$ for each $n$, and $\cup_n K_n=\Omega$.
Define $\tilde f_n:= 1_{K_n}f_n\in  \text{Span} \{C^\infty[0 ,T]\cdot C_c^\infty(\Omega)\}$ for each  $n\in\mathbb N$.   Then, as $n\to\infty$
\begin{align*}
\|\tilde f_n -f\|_{C ([0,T]\times \Omega)}
&\le\ \|\tilde f_n -f\|_{C ([0,T]\times K_n)} + \|\tilde f_n -f\|_{C \left( [0,T]\times K_{n+1}\backslash K_n\right)}+\|\tilde f_n - f\|_{C \left([0,T]\times \overline\Omega \backslash K_{n+1}\right)}\\
&=  \|  f_n -f\|_{C ([0,T]\times K_n)} + \|\tilde f_n -f\|_{C \left( [0,T]\times K_{n+1}\backslash K_n\right)}+\|f\|_{C \left([0,T]\times \overline\Omega \backslash K_{n+1}\right)}\\
&\to0.
\end{align*}
Then the density of $\text{Span} \{C^1[0,T]\cdot \text{Dom}(\Delta_\Omega)\}$ in $ C_{\partial\Omega}([0,T]\times\Omega)$
together with the fact that $\text{Span} \{C^1[0,T]\cdot \text{Dom}(\Delta_\Omega) \}$ is invariant under $T^{(\rho),\Omega}$ and a subspace of $\text{Dom}(\mathcal L_\Omega^{(\rho)})$
 completes the proof by \cite[Lemma 1.34]{Schilling}.
\end{proof}

Then a similar argument shows the following corollary.
\begin{corollary}\label{cor32}
With the notation of Definition \ref{3processes} and Definition \ref{1process}, it holds that:
\begin{enumerate}[(i)]
\item
 the operators $T^{(\rho),\text{kill},\Omega}:=\{T^{(\rho),\text{kill}}_sT^\Omega_s\}_{s\ge 0}$ form a Feller semigroup on $C_{0,\partial\Omega}([0,T]\times\Omega)$. The generator $(\mathcal L_{\Omega}^{(\rho),\text{kill}},\text{Dom}(\mathcal L_{\Omega}^{(\rho),\text{kill}}))$ of $T^{(\rho),\text{kill},\Omega}$ is the closure of
$$
\left(-D^{(\rho)}_0+\Delta,\ \text{Span}\{C^1_0[0,T]\cdot \text{Dom}(\Delta_\Omega)\}\right)\quad\text{in}\quad C_{0,\partial\Omega}([0,T]\times\Omega),
$$
 where $T^{(\rho),\text{kill}}$ and $-D^{(\rho)}_0$ act on the $[0,T]$-variable, and $T^\Omega$ and $\Delta$ act on the $\Omega$-variable.
\item \label{cor:32}
The operators $T^{(\rho),\infty,\Omega}:=\{T^{(\rho),\infty}_sT^\Omega_s\}_{s\ge 0}$ form a Feller semigroup on $C_{\infty,\partial\Omega}((-\infty,T]\times\Omega)$. The generator $(\mathcal L_{\Omega}^{(\rho),\infty},\text{Dom}(\mathcal L_{\Omega}^{(\rho),\infty}))$ of $T^{(\rho),\infty,\Omega}$ is the closure of
$$
\left(-D_{\infty}^{(\rho)}+\Delta,\ \text{Span}\{C^1_\infty(-\infty,T]\cdot \text{Dom}(\Delta_\Omega)\} \right)\quad\text{in}\quad C_{\infty,\partial\Omega}((-\infty,T]\times\Omega),
$$
where $T^{(\rho),\infty}$ and $-D_{\infty}^{(\rho)}$ act on the $(-\infty,T]$-variable, and $T^\Omega$ and $\Delta$ act on the $\Omega$-variable.
\end{enumerate}
\end{corollary}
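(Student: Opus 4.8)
The plan is to mimic the proof of Lemma \ref{lem:31} almost verbatim, replacing the relevant function spaces and invariant cores with those appropriate to each of the two semigroups, and pointing out only the places where the argument genuinely differs. So first I would observe that, exactly as in the proof of Lemma \ref{lem:31}, the commutation $T^{(\rho),\text{kill}}_sT^\Omega_r=T^\Omega_rT^{(\rho),\text{kill}}_s$ (resp.\ $T^{(\rho),\infty}_sT^\Omega_r=T^\Omega_rT^{(\rho),\infty}_s$) holds because the temporal semigroup acts only on the time variable and the spatial semigroup only on the space variable, and the product of two commuting strongly continuous contraction semigroups that preserve nonnegativity is again such a semigroup; the only thing to check is that the product maps the correct Banach space into itself. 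For part (i) this is $C_{0,\partial\Omega}([0,T]\times\Omega)$: since $T^{(\rho),\text{kill}}_s$ preserves $C_0[0,T]$ (it kills at $0$) and $T^\Omega_s$ preserves vanishing on $\partial\Omega$, the product preserves vanishing both at $t=0$ and on $\partial\Omega$, so it is a Feller semigroup on $C_{0,\partial\Omega}([0,T]\times\Omega)$. For part (ii) the relevant space is $C_{\infty,\partial\Omega}((-\infty,T]\times\Omega)$ and one checks that $T^{(\rho),\infty}_s$ preserves vanishing at $-\infty$ while $T^\Omega_s$ preserves vanishing on $\partial\Omega$ and at spatial infinity is not an issue since $\Omega$ is bounded.

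Next I would identify the generator on the proposed core. For a product function $f=pq$ with $p$ in the appropriate temporal core ($C^1_0[0,T]$ for (i), $C^1_\infty(-\infty,T]$ for (ii)) and $q\in\text{Dom}(\Delta_\Omega)$, the same three-term triangle-inequality estimate as in Lemma \ref{lem:31} gives
\begin{align*}
\Big|\frac{T^{(\rho),\star}_hT^\Omega_h f-f}{h}&-(-D^{(\rho)}_\star+\Delta)f\Big|\\
\le&\ \|p\|_\infty\Big\|\frac{T^\Omega_hq-q}{h}-\Delta q\Big\|_\infty+\|\Delta q\|_\infty\big\|T^{(\rho),\star}_hp-p\big\|_\infty\\
&\ +\|q\|_\infty\Big\|\frac{T^{(\rho),\star}_hp-p}{h}+D^{(\rho)}_\star p\Big\|_\infty\to 0,
\end{align*}
using Definition \ref{3processes}-(iii) (resp.\ Definition \ref{3processes}-(i)) to identify $\mathcal L^{(\rho)}_{\text{kill}}p=-D^{(\rho)}_0p$ on $C^1_0[0,T]$ (resp.\ $\mathcal L^{(\rho)}_\infty p=-D^{(\rho)}_\infty p$ on $C^1_\infty(-\infty,T]$). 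Extending by linearity, the generator agrees with $-D^{(\rho)}_0+\Delta$ (resp.\ $-D^{(\rho)}_\infty+\Delta$) on the span of such products.

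Finally I would establish that the proposed core is dense in the ambient Banach space and invariant under the semigroup, then invoke \cite[Lemma 1.34]{Schilling} to conclude it is an invariant core, so that the generator is the closure of the operator on it. Invariance is immediate since each factor space is invariant under the corresponding semigroup and the semigroups commute. For density in part (i): $\text{Span}\{C^\infty_0[0,T]\cdot C^\infty_c(\Omega)\}$ is contained in $\text{Span}\{C^1_0[0,T]\cdot\text{Dom}(\Delta_\Omega)\}$, and one approximates $f\in C_{0,\partial\Omega}([0,T]\times\Omega)$ by the same Stone--Weierstrass plus compact-exhaustion cutoff argument used in Lemma \ref{lem:31}, noting that the extra constraint $f(0,\cdot)=0$ is preserved because the approximants can be taken to vanish at $t=0$ (multiply the time-factor approximants, themselves obtainable from Stone--Weierstrass on $C[0,T]$, by a smooth function vanishing at $0$ and near $1$ away from it, exploiting that $f$ itself vanishes there). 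For part (ii) the space $(-\infty,T]$ is noncompact, so I would instead use that $C^\infty_c(-\infty,T)\cdot C^\infty_c(\Omega)$ has dense span in $C_{\infty,\partial\Omega}((-\infty,T]\times\Omega)$ by the locally-compact version of Stone--Weierstrass, and this span sits inside $\text{Span}\{C^1_\infty(-\infty,T]\cdot\text{Dom}(\Delta_\Omega)\}$. The main obstacle is this last density step in the noncompact case (ii): one must be careful that the tensor-product functions one uses do genuinely vanish at $-\infty$ and on $\partial\Omega$ simultaneously and that the compact version of Stone--Weierstrass is replaced by its $C_\infty$ analogue; everything else is a routine transcription of Lemma \ref{lem:31}.
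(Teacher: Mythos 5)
Your overall route is exactly the intended one: the paper gives no separate proof of Corollary \ref{cor32} (it is stated as following by ``a similar argument'' to Lemma \ref{lem:31}), and your proposal is a faithful transcription of that argument — commuting contraction semigroups on the product space, identification of the generator on tensor products via the three-term triangle inequality, and then density plus invariance of the span together with \cite[Lemma 1.34]{Schilling}. Two remarks on the details. First, a minor point in part (i): the paper's core is $C^1_0[0,T]=\{f\in C^1[0,T]:f,f'\in C_0[0,T]\}$, so the time factors of your approximants must vanish to \emph{first} order at $t=0$; your cutoff works provided you take it identically zero in a neighbourhood of $0$ (a cutoff merely vanishing at $0$ could produce $(\chi p)'(0)=\chi'(0)p(0)\neq0$). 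The error estimate $\|(1-\chi_\epsilon)f\|_\infty\le\sup_{t\le\epsilon}\|f(t,\cdot)\|_\infty$ is fine since $f$ vanishes on $\{0\}\times\overline\Omega$.

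The genuine problem is the density step in part (ii) as you wrote it. You propose that $\text{Span}\{C_c^\infty(-\infty,T)\cdot C_c^\infty(\Omega)\}$ is dense in $C_{\infty,\partial\Omega}((-\infty,T]\times\Omega)$, but every element of that span vanishes identically near $\{T\}\times\overline\Omega$, hence so does every uniform limit on $\{T\}\times\overline\Omega$; since functions in $C_{\infty,\partial\Omega}((-\infty,T]\times\Omega)$ are \emph{not} required to vanish at $t=T$ (the only decay condition is at $t\to-\infty$ and on $\partial\Omega$), the closure of your span is a proper subspace and the locally compact Stone--Weierstrass theorem does not apply to it — the algebra vanishes identically at the points $(T,x)$, violating the ``vanishes nowhere'' hypothesis. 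The fix is easy and restores the analogy with Lemma \ref{lem:31}: take time factors in a subalgebra of $C^1_\infty(-\infty,T]$ whose elements may be nonzero at $T$, e.g.\ smooth functions with compact support in the half-closed interval $(-\infty,T]$ (vanishing only for $t$ sufficiently negative), so that $\text{Span}\{C_c^\infty((-\infty,T])\cdot C^\infty(\overline\Omega)\}$ separates points of the locally compact space $(-\infty,T]\times\overline\Omega$ and vanishes at no point; locally compact Stone--Weierstrass then gives density in $C_\infty((-\infty,T]\times\overline\Omega)$, and the spatial cutoff by the exhaustion $\{1_{K_n}\}$ exactly as in Lemma \ref{lem:31} brings the approximants into $\text{Span}\{C^1_\infty(-\infty,T]\cdot C_c^\infty(\Omega)\}\subset\text{Span}\{C^1_\infty(-\infty,T]\cdot \text{Dom}(\Delta_\Omega)\}$ while preserving the boundary condition. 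With that correction your argument is complete and coincides with the paper's intended proof.
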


\begin{remark}
If the spatial generator is not the Laplacian, it could happen that $C_c^\infty(\Omega)$ is not contained in the domain of the spatial generator (as in the case of the restricted fractional Laplacian). In such case one can extend the proof of Lemma \ref{lem:31} as in \cite[Appendix II]{T18}.
\end{remark}

\begin{lemma}\label{lem32}
With the notation of Definition \ref{3processes} and Definition \ref{1process}, it holds that
$$T^{(\rho),\Omega} =T^{(\rho),\text{kill},\Omega}\quad \text{on}\quad C_{0,\partial\Omega}([0,T]\times \Omega),$$
and
$$\mathcal L_{\Omega}^{(\rho)} =\mathcal L_{\Omega}^{(\rho),\text{kill}}\quad\text{on}\quad\text{Dom}(\mathcal L_{\Omega}^{(\rho),\text{kill}})
 = \text{Dom}(\mathcal L_{\Omega}^{(\rho)})\cap\{f(0)=0\}.$$
\end{lemma}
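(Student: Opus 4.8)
The statement asserts two things: an equality of semigroups on $C_{0,\partial\Omega}([0,T]\times\Omega)$, and the corresponding identification of generators together with a description of the domain. The plan is to reduce the space-time statement to the purely temporal one already recorded in Proposition \ref{3processesresults}-(iv), namely that $(\mathcal L_{\text{kill}}^{(\rho)},\text{Dom}(\mathcal L_{\text{kill}}^{(\rho)})) = (\mathcal L^{(\rho)},\text{Dom}(\mathcal L^{(\rho)})\cap\{f(0)=0\})$, and to the analogous (classical) fact that $T^{(\rho),\text{kill}}_s = T^{(\rho)}_s$ on $C_0[0,T]$ since $C_0[0,T]$ is invariant under $T^{(\rho)}$ and the restriction of a Feller semigroup to a closed invariant subspace is again a Feller semigroup with generator the part of the original generator on that subspace.

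First I would prove the semigroup identity. Fix $f\in C_{0,\partial\Omega}([0,T]\times\Omega)$ and $s\ge 0$. Since $T^\Omega_s$ acts only in the $\Omega$-variable, $T^\Omega_s f(t,\cdot)\in C_{\partial\Omega}(\Omega)$ and $T^\Omega_s f(0,\cdot)$ need not vanish; however, what matters is that for fixed $x$ the function $t\mapsto T^\Omega_s f(t,x)$ lies in $C_0[0,T]$ (it vanishes at $t=0$ because $f(0,\cdot)\equiv 0$, and $T^\Omega_s$ is linear and continuous in the $\Omega$-variable so preserves the vanishing at $t=0$ in the $[0,T]$-variable pointwise — more precisely $T^\Omega_sf\in C_{0,\partial\Omega}([0,T]\times\Omega)$). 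Because $C_0[0,T]$ is invariant under $T^{(\rho)}$, Definition \ref{3processes}-(iii) and the construction of $T^{(\rho),\text{kill}}$ give $T^{(\rho)}_s = T^{(\rho),\text{kill}}_s$ on $C_0[0,T]$; applying this in the $[0,T]$-variable to $t\mapsto T^\Omega_s f(t,x)$ yields $T^{(\rho)}_s T^\Omega_s f = T^{(\rho),\text{kill}}_s T^\Omega_s f$, i.e. $T^{(\rho),\Omega}_s f = T^{(\rho),\text{kill},\Omega}_s f$. This is valid for all $s$, giving the first assertion.

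For the generators, the equality of the semigroups on $C_{0,\partial\Omega}([0,T]\times\Omega)$ immediately gives $\mathcal L_\Omega^{(\rho),\text{kill}} = \mathcal L_\Omega^{(\rho)}$ on $\text{Dom}(\mathcal L_\Omega^{(\rho),\text{kill}})$, since the generator is determined by the semigroup on its state space. It remains to identify $\text{Dom}(\mathcal L_\Omega^{(\rho),\text{kill}})$ with $\text{Dom}(\mathcal L_\Omega^{(\rho)})\cap\{f(0)=0\}$. The inclusion $\subset$ is clear: $\text{Dom}(\mathcal L_\Omega^{(\rho),\text{kill}})\subset C_{0,\partial\Omega}([0,T]\times\Omega)$ so its elements vanish at $t=0$, and by the semigroup identity the limit $\lim_{h\downarrow0}h^{-1}(T^{(\rho),\Omega}_hf-f)$ exists in $C_{0,\partial\Omega}$ hence a fortiori in $C_{\partial\Omega}$, so $f\in\text{Dom}(\mathcal L_\Omega^{(\rho)})$. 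For the reverse inclusion, take $f\in\text{Dom}(\mathcal L_\Omega^{(\rho)})$ with $f(0,\cdot)=0$; then $T^{(\rho),\Omega}_hf = T^{(\rho),\text{kill},\Omega}_hf\in C_{0,\partial\Omega}$ for all $h$, and $h^{-1}(T^{(\rho),\Omega}_hf-f)\to \mathcal L_\Omega^{(\rho)}f$ in $C_{\partial\Omega}$; since $C_{0,\partial\Omega}$ is closed in $C_{\partial\Omega}$ and each difference quotient lies in it, the limit $\mathcal L_\Omega^{(\rho)}f$ also lies in $C_{0,\partial\Omega}$, so $f\in\text{Dom}(\mathcal L_\Omega^{(\rho),\text{kill}})$ with $\mathcal L_\Omega^{(\rho),\text{kill}}f = \mathcal L_\Omega^{(\rho)}f$.

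The main obstacle, such as it is, is the bookkeeping in the first step: one must be careful that $T^\Omega_s$ maps $C_{0,\partial\Omega}([0,T]\times\Omega)$ into itself (so that the temporal kill/absorb identity on $C_0[0,T]$ can be applied slicewise in $x$ and reassembled into a statement on the product space), and that the commutation $T^{(\rho)}_sT^\Omega_s = T^\Omega_sT^{(\rho)}_s$ from Lemma \ref{lem:31} is used consistently. Everything else is a soft functional-analytic argument: identity of semigroups forces identity of generators on the smaller state space, and the domain description follows from closedness of $C_{0,\partial\Omega}$ in $C_{\partial\Omega}$ together with Proposition \ref{3processesresults}-(iv) applied in the temporal variable. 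No new estimates are needed.
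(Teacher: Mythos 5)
Your proof is correct, and the first half (the semigroup identity and the inclusion $\text{Dom}(\mathcal L_{\Omega}^{(\rho),\text{kill}})\subset \text{Dom}(\mathcal L_{\Omega}^{(\rho)})\cap\{f(0)=0\}$ with equality of the generators there) follows the same observation as the paper, namely that $T^{(\rho),\text{kill}}=T^{(\rho)}$ on $C_0[0,T]$ applied slicewise after noting $T^\Omega_s$ preserves $C_{0,\partial\Omega}([0,T]\times\Omega)$. Where you genuinely diverge is the reverse inclusion: you argue directly with difference quotients, using that $C_{0,\partial\Omega}([0,T]\times\Omega)$ is a closed subspace of $C_{\partial\Omega}([0,T]\times\Omega)$ in the sup norm, so the quotients $h^{-1}(T^{(\rho),\text{kill},\Omega}_hf-f)=h^{-1}(T^{(\rho),\Omega}_hf-f)$ converge inside the closed subspace whenever $f\in\text{Dom}(\mathcal L_{\Omega}^{(\rho)})$ with $f(0)=0$. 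The paper instead writes $u\in\text{Dom}(\mathcal L_{\Omega}^{(\rho)})$ via its resolvent representation $u=\int_0^\infty e^{-\lambda s}T^{(\rho)}_sT^\Omega_s g\,ds$ and shows $u-u(0)=\int_0^\infty e^{-\lambda s}T^\Omega_sT^{(\rho),\text{kill}}_s(g-g(0))\,ds\in \text{Dom}(\mathcal L_{\Omega}^{(\rho),\text{kill}})$. Your route is more elementary (no resolvent identity needed) and fully proves the lemma as stated; the paper's route yields the marginally stronger fact that $u-u(0)$ lies in the killed domain for \emph{every} $u\in\text{Dom}(\mathcal L_{\Omega}^{(\rho)})$, with no regularity assumed on $u(0,\cdot)$, which is the form invoked in the proofs of Theorem \ref{lem_sdoggs} and Theorem \ref{thm:main} (there $\phi(0)\in\text{Dom}(\Delta_\Omega)$, so your statement suffices as well). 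One cosmetic point: early on you write that $T^\Omega_sf(0,\cdot)$ ``need not vanish'' and then immediately (and correctly) argue that it does vanish because $f(0,\cdot)\equiv0$; the substance is right, but you should clean up that sentence.
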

\begin{proof}
 The first  claim  is an immediate consequence of the observation that
 $T^{(\rho),\text{kill}}=T^{(\rho)}$ on $C_{0}[0,T].$
To prove the second claim, we first confirm  that
$\text{Dom}(\mathcal L_{\Omega}^{(\rho),\text{kill}})\subset \text{Dom}(\mathcal L^{(\rho)}_\Omega )$
by the fact that $T_s^{(\rho),\Omega}=T^{(\rho),\text{kill},\Omega}$ on $C_{0,\partial\Omega}([0,T]\times \Omega)$.
Next, we show that
\begin{equation*}
u-u(0)\in \text{Dom}(\mathcal L_{\Omega}^{(\rho),\text{kill}})\quad \text{for all} \quad u\in \text{Dom}(\mathcal L^{(\rho)}_\Omega ).
\end{equation*}
In fact, let $u\in \text{Dom}(\mathcal L^{(\rho)}_\Omega )$ and consider its resolvent representation for some $\lambda>0$ and $g\in C_{\partial\Omega}([0,T]\times \Omega)$
\[
u(t,x)=\int_0^\infty e^{-\lambda s}T^{(\rho)}_sT^\Omega_sg(t,x)\, ds,
\]
and hence
\begin{align*}
u(t,x)-u(0,x)&=\int_0^\infty e^{-\lambda s}T^\Omega_sT^{(\rho)}_s(g-g(0))(t,x)\,ds\\
&=\int_0^\infty e^{-\lambda s}T^\Omega_sT^{(\rho),\text{kill}}_s(g-g(0))(t,x)\,ds\in \text{Dom}(\mathcal L_{\Omega}^{(\rho),\text{kill}}),
\end{align*}
where we use the fact that  $T^{(\rho),\text{kill}}=T^{(\rho)}$ on $C_{0,\partial\Omega}([0,T]\times \Omega)$ and that $g-g(0)\in C_{0,\partial\Omega}([0,T]\times \Omega)$.

\end{proof}

\begin{remark}\label{rem:resol}
Note that the resolvent representation yields that
\[
\left(-\mathcal L^{(\rho),\text{kill}}_\Omega\right)^{-1}g(t,x)=\int_0^\infty T_s^{(\rho),\Omega}g(t,x)\,ds=\mathbf E\left[\int_0^{\tau_0(t) \wedge \tau_\Omega(x)}g\left(-X^{t,(\rho)}(s),B^x(s)\right)ds\right],
\]
for $g\in C_{0,\partial\Omega}([0,T]\times \Omega)$, as
\begin{align*}
T_s^{(\rho),\Omega}g(t,x)= T_s^{(\rho)}T^{\Omega}_sg(t,x)&=\mathbf E\left[g\left(-X^{t,(\rho)}(s)\mathbf 1_{\{s<\tau_0(x)\}},B^x(s)\mathbf 1_{\{s<\tau_\Omega(x)\}}\right)\right]\\
&= \mathbf E\left[g\left(-X^{t,(\rho)}(s),B^x(s)\right)\mathbf 1_{\{s<\tau_0(x)\}}\mathbf 1_{\{s<\tau_\Omega(x)\}}\right].
\end{align*}
 Also, if $g=1$ then $(-\mathcal L^{(\rho),\text{kill}}_\Omega)^{-1}g(t,x)=\mathbf E[\tau_{t,x}]$, where we write $\tau_{t,x}=\tau_0 (t) \wedge \tau_\Omega(x)$.
\end{remark}

\subsection{Notions of solutions}
In order to discuss the stochastic representation of solutions to \eqref{preRL},
we use the following two auxiliary notions of solutions to the variant problem \eqref{postRL}, as in \cite{HKT17}.
\begin{definition}\label{def:sdog}
Let $g\in C_{\partial\Omega}([0,T]\times \Omega)$ and $\phi(0)\in \text{Dom}(\Delta_\Omega)$ such that $g(0)=-\Delta \phi(0)$. We say that a function $u\in C_{\partial\Omega}([0,T]\times \Omega)$ is a \emph{solution in the domain of the generator to problem }(\ref{postRL}) if
\begin{equation}
\mathcal L_{\Omega}^{(\rho)}u=-g\text{ on }(0,T]\times\overline\Omega,\quad u(0)=\phi(0),\quad\text{and } u\in \text{Dom}(\mathcal L^{(\rho)}_\Omega).
\label{sdgcond}
\end{equation}
\end{definition}
The next solution concept for problem (\ref{postRL}) is defined as a pointwise approximation of solutions  in the domain of the generator.
\begin{definition}\label{def:gen_sol}
Let $g\in B([0,T]\times \Omega)$ and $\phi(0)\in \text{Dom}(\Delta_\Omega)$. We say that a function $u\in B([0,T]\times \overline\Omega)$ is a \emph{generalized solution to problem }(\ref{postRL}) if
\[
u=\lim_{n\to\infty}u_n\quad\text{pointwise,}
\]
where  $\{u_n\}_{n\in\mathbb N}$ is a sequence of solutions in the domain of the generator for a corresponding sequence of data $\{g_n\}_{n\in\mathbb N}\subset C_{\partial\Omega}([0,T]\times \Omega)$ such that $g_n\to g$ a.e. on $(0,T]\times \Omega$,  $\sup_n \|g_n\|_\infty<\infty$, and $g_n(0)=-\Delta\phi(0)$ for each $n\in\mathbb N$.
\end{definition}

\begin{remark}\label{rem:genb}
The generalized solution will retain the homogeneous Dirichlet boundary condition on $\partial\Omega$ and the initial condition  $u(0)=\phi(0)$.
\end{remark}

\subsection{Well-posedness and Feynman-Kac  formula for  problem (\ref{postRL})}
In order to study the Feynman-Kac stochastic formula, we use following assumption on the initial data:
\begin{description}
\item[(H2)\label{H2}] The initial data  $\phi:(-\infty,0]\times \overline\Omega\to \mathbb R$ is such that the extension of $\phi$ to $\phi(0)$ on $(0,T]\times \overline \Omega$ satisfies
$\phi\in \text{Dom}(\mathcal L^{(\rho),\infty}_\Omega)$ and $ \mathcal L^{(\rho),\infty}_\Omega \phi = (-D_{\infty}^{(\rho)}+\Delta)\phi$.
\end{description}

\begin{remark}\label{rem:H2}
We have some observations on the assumption \ref{H2}:
\begin{enumerate}[(i)]
	\item Assumption \ref{H2} is satisfied for example by  linear combinations of  initial conditions
	in variables-separable form, that is,  $\phi(t,x)=p(t)q(x)$,
	where $p\in C^1_\infty((-\infty,0])$, $p'(0-)=0$ and $q\in \text{Dom}(\Delta_\Omega)$. Such set of functions
	is dense in $C_{\infty,\partial\Omega}((-\infty,0]\times\Omega)$.
	The problem \eqref{preRL} with such a kind of initial data has been analytically studied in \cite{DYZ17}.
	\item
Note that \ref{H2} implies $\phi(0)\in \text{Dom}(\Delta_\Omega)$ and $f_\phi \in C([0,T]\times\Omega)$.
This is because \ref{H2} implies $\phi(0)\in C_{\partial\Omega}(\Omega)$, $\Delta\phi(t)=\Delta\phi(0)\in C_{\partial\Omega}(\Omega)$ for $t\in[0,T]$  and $f_\phi =-D_{\infty}^{(\rho)}\phi$.
\item The case where \ref{H2} no longer holds is to be discussed in the next section.
	\end{enumerate}
\end{remark}

\begin{theorem}\label{lem_sdoggs} Assume \ref{H0}.  Then
\begin{enumerate}[(i)]
	\item If $g+\Delta \phi(0)\in C_{0,\partial\Omega}([0,T]\times \Omega)$ for some $g\in C_{\partial\Omega}([0,T]\times \Omega)$ and $ \phi(0)\in \text{Dom}(\Delta_\Omega)$,
	then there exists a unique  solution in the domain of the generator to problem (\ref{postRL}).\\
	\item Assume \ref{H1}. If $g\in B([0,T]\times \Omega)$ and $ \phi(0)\in \text{Dom}(\Delta_\Omega)$, then there exists a
	unique generalized  solution  to problem (\ref{postRL}), and the generalized solution allows the stochastic representation for any $(t,x)\in(0,T]\times\Omega$
		\begin{equation}\label{thm:Caputo_SR_C}\begin{split}
 u(t,x)=&\  \mathbf E\left[ \phi(0,B^x(\tau_0(t)))\mathbf 1_{\{\tau_0(t)< \tau_\Omega(x)\}}\right] \\
 &\quad+\mathbf E\left[\int_0^{\tau_0(t)\wedge \tau_\Omega(x)}g\left(-X^{t,(\rho)}(s),B^x(s)\right)ds\right].
\end{split}\end{equation}
	\item Assume \ref{H1}, \ref{H2} and let $g=f+f_\phi$, for $f\in B([0,T]\times\overline\Omega)$. Then both solutions in part (i) and (ii) allow the stochastic representation for any $(t,x)\in(0,T]\times\Omega$
{\small
	\begin{equation}\label{thm:Caputo_SR}\begin{split}
 u(t,x)=&\ \mathbf E\left[ \phi\left(-X^{t,(\rho)}(\tau_0(t)),B^x(\tau_0(t))\right)\mathbf 1_{\{\tau_0(t)< \tau_\Omega(x)\}}\right]\\
  &\quad +\mathbf E\left[\int_0^{\tau_0(t)\wedge \tau_\Omega(x)}f\left(-X^{t,(\rho)}(s),B^x(s)\right)ds\right].
\end{split}\end{equation}}
\end{enumerate}
\end{theorem}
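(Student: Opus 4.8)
The plan is to treat the three parts in sequence, reusing the semigroup machinery built in Section~\ref{sec:generaltheory}. For part~(i), given the compatibility hypothesis $g+\Delta\phi(0)\in C_{0,\partial\Omega}([0,T]\times\Omega)$, I would set $v:=u-\phi(0)$ and look for $v$ solving $\mathcal L^{(\rho),\text{kill}}_\Omega v = -(g+\Delta\phi(0))$ with $v(0)=0$; note that $g+\Delta\phi(0)$ lies in the right space $C_{0,\partial\Omega}([0,T]\times\Omega)$ on which $T^{(\rho),\text{kill},\Omega}$ acts (Corollary~\ref{cor32}-(i)). Existence and uniqueness then come directly from the abstract resolvent theory quoted in the preliminaries: $(-\mathcal L^{(\rho),\text{kill}}_\Omega)^{-1}$ is a bijection onto $\text{Dom}(\mathcal L^{(\rho),\text{kill}}_\Omega)$, so $v=(-\mathcal L^{(\rho),\text{kill}}_\Omega)^{-1}(g+\Delta\phi(0))$ is the unique candidate; setting $u=v+\phi(0)$ and invoking Lemma~\ref{lem32} (which identifies $\mathcal L^{(\rho),\text{kill}}_\Omega$ with $\mathcal L^{(\rho)}_\Omega$ restricted to $\{f(0)=0\}$ and shows $\phi(0)\in\text{Dom}(\Delta_\Omega)$ gives $\mathcal L^{(\rho)}_\Omega\phi(0)=\Delta\phi(0)$ via the variables-separable structure, using that the constant-in-time extension kills $D^{(\rho)}_0$) yields $\mathcal L^{(\rho)}_\Omega u=-g$. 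Uniqueness for $u$ follows from uniqueness for $v$ since the map $u\mapsto u-u(0)$ is injective on the relevant class.

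For part~(ii), I would approximate: given $g\in B([0,T]\times\Omega)$, pick $g_n\in C_{\partial\Omega}([0,T]\times\Omega)$ with $g_n\to g$ a.e., $\sup_n\|g_n\|_\infty<\infty$, and $g_n(0)=-\Delta\phi(0)$ (so $g_n+\Delta\phi(0)\in C_{0,\partial\Omega}$), apply part~(i) to get solutions $u_n$ in the domain of the generator, and write each $u_n$ via the resolvent representation of Remark~\ref{rem:resol}. Concretely $u_n = \phi(0) + (-\mathcal L^{(\rho),\text{kill}}_\Omega)^{-1}(g_n+\Delta\phi(0))$, and the probabilistic form of that resolvent gives
\[
u_n(t,x)=\phi(0,x)+\mathbf E\!\left[\int_0^{\tau_0(t)\wedge\tau_\Omega(x)}(g_n+\Delta\phi(0))\!\left(-X^{t,(\rho)}(s),B^x(s)\right)ds\right].
\]
Splitting off the $\Delta\phi(0)$ piece and applying Dynkin's formula (or the resolvent identity) to $\phi(0)\in\text{Dom}(\Delta_\Omega)$ converts $\phi(0,x)+\mathbf E[\int_0^{\tau_{t,x}}\Delta\phi(0)(B^x(s))ds]$ into $\mathbf E[\phi(0,B^x(\tau_0(t)))\mathbf 1_{\{\tau_0(t)<\tau_\Omega(x)\}}]$ — here the term with $\tau_\Omega\le\tau_0$ vanishes because $\phi(0)=0$ on $\partial\Omega$ and $B^x$ exits through $\partial\Omega$ at $\tau_\Omega$. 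For the $g_n$ term I pass to the limit by dominated convergence: the integrand is bounded uniformly in $n$, $\mathbf E[\tau_0(t)\wedge\tau_\Omega(x)]<\infty$ by Proposition~\ref{3processesresults}-(iii), and $g_n\to g$ holds along the trajectory $(-X^{t,(\rho)}(s),B^x(s))$ for a.e.\ $s$ with probability one — this last point is exactly where assumption~\ref{H1} enters, since the existence of the density $p_s^{(\rho)}(t)$ together with the density $p_s^\Omega(x,y)$ ensures the occupation measure $\mathbf E[\int_0^{\tau_{t,x}}\mathbf 1_E(\cdots)ds]$ is absolutely continuous, so a Lebesgue-null set $E$ on which $g_n\not\to g$ is not seen. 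The limit is independent of the approximating sequence, giving both the uniqueness of the generalized solution and the representation \eqref{thm:Caputo_SR_C}.

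For part~(iii), with $\phi$ satisfying~\ref{H2} and $g=f+f_\phi$, I would start from the representation \eqref{thm:Caputo_SR_C} of part~(ii) and absorb the $f_\phi$ contribution into the historical initial data. The key identity is $f_\phi=-D^{(\rho)}_\infty\phi$ (for $\phi$ extended by $\phi(0)$ on $(0,T]$), so that on $(0,T]\times\overline\Omega$ one has $(-D^{(\rho)}_0+\Delta)$ applied to the constant-extended $\phi$ equals $-(f_\phi)$ plus the spatial part; assumption~\ref{H2} says precisely that $\phi\in\text{Dom}(\mathcal L^{(\rho),\infty}_\Omega)$ and the generator acts as the formal operator, which lets me run a Dynkin-type argument for the process $(-X^{t,(\rho)},B^x)$ on the whole $(-\infty,T]\times\Omega$ up to the joint exit time $\tau_0(t)\wedge\tau_\Omega(x)$. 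Applying the optional-stopping/Dynkin formula to $\phi$ against the full (non-killed) temporal semigroup $T^{(\rho),\infty,\Omega}$ gives
\[
\phi(0,x)+\mathbf E\!\left[\int_0^{\tau_{t,x}}f_\phi\!\left(-X^{t,(\rho)}(s),B^x(s)\right)ds\right]=\mathbf E\!\left[\phi\!\left(-X^{t,(\rho)}(\tau_0(t)),B^x(\tau_0(t))\right)\mathbf 1_{\{\tau_0(t)<\tau_\Omega(x)\}}\right],
\]
because at the stopping time $\tau_0(t)$ the temporal process $-X^{t,(\rho)}$ has crossed $0$ and thus samples the genuine history of $\phi$ rather than its constant extension, while on $\{\tau_\Omega\le\tau_0\}$ the boundary condition $\phi=0$ on $\partial\Omega$ kills the term. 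Subtracting this from \eqref{thm:Caputo_SR_C} leaves exactly \eqref{thm:Caputo_SR}; the second assertion, that the solution in the domain of the generator from part~(i) coincides with this, follows because under~\ref{H2} the data $g=f+f_\phi$ is continuous (Remark~\ref{rem:H2}-(ii)) so part~(i) applies when $f$ is also continuous with the right compatibility, and in general both solutions agree as generalized solutions by the uniqueness in part~(ii).

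\textbf{Main obstacle.} The delicate step is the Dynkin/optional-stopping argument in part~(iii): one must justify stopping $\phi$ at the possibly-jumping exit time $\tau_0(t)$ of the temporal process and correctly account for the overshoot, i.e.\ that $-X^{t,(\rho)}(\tau_0(t))$ lands strictly below $0$ (this is where the second half of~\ref{H1}, $\mathbf P[-X^{t,(\rho)}(\tau_0(t))=0]=0$, is needed so the "regeneration at $0$" scenario of Remark~\ref{ch_1_rmk_MC_regularity2} has probability zero), together with checking that $\phi$ evaluated along the stopped trajectory is integrable given only \ref{H2}. The bookkeeping separating the $\{\tau_0<\tau_\Omega\}$ and $\{\tau_\Omega\le\tau_0\}$ events, and verifying the martingale has the required integrability up to $\tau_{t,x}$ using $\sup_t\mathbf E[\tau_0(t)]<\infty$, is where most of the real work lies.
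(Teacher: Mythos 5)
Your parts (i) and (ii) follow the paper's own route almost verbatim: reduce to the killed generator via Lemma \ref{lem32}, solve $\mathcal L^{(\rho),\text{kill}}_\Omega v=-(g+\Delta\phi(0))$ by the potential operator, then approximate bounded $g$ by continuous compatible $g_n$ and pass to the limit using the densities from \ref{H1} and Dynkin's formula for $\phi(0)\in\text{Dom}(\Delta_\Omega)$. One small caveat: the resolvent bijectivity quoted in the preliminaries is only for $\lambda>0$, so for the zero-resolvent $(-\mathcal L^{(\rho),\text{kill}}_\Omega)^{-1}=\int_0^\infty T^{(\rho),\text{kill},\Omega}_s\,ds$ you still have to check that it is a bounded operator mapping $C_{0,\partial\Omega}([0,T]\times\Omega)$ into itself, which is exactly where $\sup_t\mathbf E[\tau_0(t)]<\infty$ (Proposition \ref{3processesresults}-(iii)) enters before invoking the abstract result of Dynkin; you use this finiteness in (ii) but gloss over it in (i).

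The genuine problem is the key displayed identity in your part (iii): it is false as stated. The generator of $T^{(\rho),\infty,\Omega}$ is $-D_{\infty}^{(\rho)}+\Delta$, and for the constant-in-time extension of $\phi$ one has $\Delta\phi=\Delta\phi(0)$, which does not vanish in general; Dynkin's formula under \ref{H2} therefore reads (writing $\tau_{t,x}:=\tau_0(t)\wedge\tau_\Omega(x)$)
\begin{equation*}
\mathbf E\left[\phi\left(-X^{t,(\rho)}(\tau_{t,x}),B^x(\tau_{t,x})\right)\right]=\phi(0,x)+\mathbf E\left[\int_0^{\tau_{t,x}}\left(f_\phi+\Delta\phi(0)\right)\left(-X^{t,(\rho)}(s),B^x(s)\right)ds\right],
\end{equation*}
so your left-hand side is missing the occupation term $\mathbf E\left[\int_0^{\tau_{t,x}}\Delta\phi(0,B^x(s))\,ds\right]$. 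A concrete counterexample: take $\phi(r,y)=q(y)$ time-independent with $q\in\text{Dom}(\Delta_\Omega)$; then $f_\phi\equiv0$ and your identity would assert $q(x)=\mathbf E\left[q\left(B^x(\tau_0(t))\right)\mathbf 1_{\{\tau_0(t)<\tau_\Omega(x)\}}\right]$, which fails, e.g., for a Dirichlet eigenfunction. Consequently "subtracting this from \eqref{thm:Caputo_SR_C}" does not leave \eqref{thm:Caputo_SR}; it would require $\mathbf E[\phi(0,B^x(\tau_0(t)))\mathbf 1_{\{\tau_0(t)<\tau_\Omega(x)\}}]=\phi(0,x)$. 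The repair is the paper's bookkeeping: keep the $\Delta\phi(0)$ term and combine with the part-(ii) identity $\mathbf E[\phi(0,B^x(\tau_0(t)))\mathbf 1_{\{\tau_0(t)<\tau_\Omega(x)\}}]=\phi(0,x)+\mathbf E[\int_0^{\tau_{t,x}}\Delta\phi(0,B^x(s))ds]$, which yields $\mathbf E[\phi(-X^{t,(\rho)}(\tau_0(t)),B^x(\tau_0(t)))\mathbf 1_{\{\tau_0(t)<\tau_\Omega(x)\}}]=\mathbf E[\phi(0,B^x(\tau_0(t)))\mathbf 1_{\{\tau_0(t)<\tau_\Omega(x)\}}]+\mathbf E[\int_0^{\tau_{t,x}}f_\phi(-X^{t,(\rho)}(s),B^x(s))ds]$, and then substitution into \eqref{thm:Caputo_SR_C} gives \eqref{thm:Caputo_SR}. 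Incidentally, the "main obstacle" you flag (the overshoot and the second half of \ref{H1}) is not where the work lies here: the representation uses the extension of $\phi$ by $\phi(0)$, so nothing breaks if $-X^{t,(\rho)}(\tau_0(t))=0$; that part of \ref{H1} is needed later, for the heat-kernel formula of Proposition \ref{prop:HK}.
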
\vskip10pt

\begin{proof}
(i) Recall that we write $\tau_{t,x}=\tau_0(t)\wedge\tau_\Omega(x)$. Then using Proposition \ref{3processesresults}-(iii)  with the inequality
\begin{equation*}
\left|(-\mathcal L_{\Omega}^{(\rho),\text{kill}})^{-1}w(t,x)\right|=\left|\mathbf E\left[\int_0^{\tau_{t,x}}w\left(-X^{t,(\rho)}(s),B^x(s)\right)ds)\right]\right|\le \|w\|_\infty\mathbf E\left[\tau_{t,x}\right],
\end{equation*}
for any bounded $w$, we know that $(-\mathcal L_{\Omega}^{(\rho),\text{kill}})^{-1}$
is bounded on $C_{0,\partial\Omega}([0,T]\times \Omega)$.
Meanwhile, we observe that $T^{(\rho),\text{kill},\Omega}_sw\in C_{0,\partial\Omega}([0,T]\times \Omega)$ if $w\in C_{0,\partial\Omega}([0,T]\times \Omega)$ for each $s>0$, and it holds that
$$
\int_0^\infty \left|T^{(\rho),\text{kill},\Omega}_sw(t,x)\right|ds\le \int_0^\infty \|w\|_\infty\mathbf P[s< \tau_{t,x}]ds=\|w\|_\infty\mathbf E[\tau_{t,x}]<\infty.
$$
Therefore we conclude that $(-\mathcal L_{\Omega}^{(\rho),\text{kill}})^{-1}$ maps $C_{0,\partial\Omega}([0,T]\times \Omega)$ to itself.
Then it follows by \cite[Theorem 1.1']{Dyn65} that $\bar u:=(-\mathcal L_{\Omega}^{(\rho),\text{kill}})^{-1}(g+\Delta \phi(0))$ is the unique solution to
\begin{equation}
\mathcal L_{\Omega}^{(\rho),\text{kill}} \bar u = -(g+\Delta \phi(0)) \text{ on }(0,T]\times\overline\Omega,\quad \bar u(0)=0,\quad\text{and  $\bar u\in \text{Dom}(\mathcal L_{\Omega}^{(\rho),\text{kill}})$}.
\label{aaa}
\end{equation}
It remains to show that $u$ satisfies (\ref{sdgcond}) if and only if $u-\phi(0)$ satisfies (\ref{aaa}). For the `if' direction, let $\bar u$ satisfy \eqref{aaa}. Then $u:=\bar u+\phi(0)\in \text{Dom}(\mathcal L_{\Omega}^{(\rho)})$ and $\mathcal L_{\Omega}^{(\rho),\text{kill}}\bar u=\mathcal L_{\Omega}^{(\rho)}\bar u$, both by Lemma \ref{lem32}. Also $\mathcal L_{\Omega}^{(\rho)}\phi(0)=\Delta\phi(0)$ by Lemma \ref{lem:31}, using $\mathcal L^{(\rho)}1=0$. To conclude observe that by \eqref{aaa}, $u(0)=\phi(0)$ and
$$
\mathcal L_{\Omega}^{(\rho)}(\bar u+\phi(0))=\mathcal L_{\Omega}^{(\rho),\text{kill}}\bar u+\Delta\phi(0) = -g.
$$
The `only if' direction is similar and omitted.

(ii) Now we let $g\in B([0,T]\times \Omega)$ and $ \phi(0)\in \text{Dom}(\Delta_\Omega)$.
Then we can take  a sequence $\{g_n\}_{n\mathbb N}\in C_{0,\partial\Omega}([0,T]\times \Omega) $
such that $g_n\to g$ a.e., $\sup_n\| g_n\|_\infty<\infty$ and  $g_n(0)=-\Delta \phi(0)$ as required by Definition \ref{def:gen_sol}.
Now for each $g_n$, by Remark \ref{rem:resol}, we consider the stochastic representation of the respective solution in the domain  of the generator
\begin{equation*}
u_n(t,x) =\mathbf E\left[\int_0^{\tau_{t,x}} g_n\left(-X^{t,(\rho)}(s),B^x(s)\right) ds\right]+\mathbf E\left[\int_0^{\tau_{t,x}}\Delta \phi(0,B^x(s)) ds\right]+\phi(0,x).
\end{equation*}
Then for any $(t,x)\in (0,T]\times\Omega$, we note that
\begin{align*}
\mathbf E\left[\int_0^{\tau_{t,x}} g_n\left(-X^{t,(\rho)}(s),B^x(s)\right) ds\right]&=\int_0^\infty\mathbf E\left[g_n\left(-X^{t,(\rho)}(s),B^x(s\wedge \tau_\Omega(x))\right)\mathbf 1_{\{s<\tau_0 (t)\}}\right]\, ds\\
&= \int_0^\infty\left( \int_\Omega\int_{(0,t]} g_n(z,y)  p^{(\rho)}_s(t,z)p^\Omega_s(x,y)\,dz\,dy\right) ds\\
& \le  \sup_n\| g_n\|_\infty \mathbf E\left[\tau_{t,x}\right] <\infty,
\end{align*}
where we use the first part of \ref{H1}  and the density $p^\Omega_s$ in the last equality.
Hence we can apply the Dominated Convergence Theorem  to obtain as $n\to\infty$,
\begin{align*}
\mathbf E\left[\int_0^{\tau_{t,x}} g_n\left(-X^{t,(\rho)}(s),B^x(s)\right) ds\right]\rightarrow\mathbf E\left[\int_0^{\tau_{t,x}}g\left(-X^{t,(\rho)}(s),B^x(s)\right) ds\right].
\end{align*}
It follows that a generalized solution $u$ exists and it is given by
\begin{align*}
  u (t,x)&= \mathbf E\left[\int_0^{\tau_{t,x}}g\left(-X^{t,(\rho)}(s),B^x(s)\right) ds\right] + \mathbf E\left[ \int_0^{\tau_{t,x}}\Delta \phi(0,B^x(s)) ds\right]+ \phi(0,x)\\
&= \mathbf E\left[\int_0^{\tau_{t,x}}g\left(-X^{t,(\rho)}(s),B^x(s)\right) ds\right]+ \mathbf E\left[ \phi(0,B^x(\tau_{t,x}))\right],
\end{align*}
where Dyinkin formula (see \cite[Theorem 5.1]{Dyn65}) is used in the last equality. Finally, the uniqueness of the generalized solution follows immediately
from the independence of the approximating sequence.\\


(iii)  Extend $\phi$ to $\phi(0)$ on $(0,T]\times \overline \Omega$, and denote it again by $\phi$.
Then by  Dynkin formula (\cite[Theorem 5.1]{Dyn65}) and Corollary \ref{cor32}-(ii) provided assumption \ref{H2},
we have
\begin{align*}
\mathbf E\left[ \phi\left(-X^{t,(\rho)}(\tau_{t,x}),B^x(\tau_{t,x})\right)\right]- \phi(t,x)&= \mathbf E \left[\int_0^{\tau_{t,x}} (-D_{\infty}^{(\rho)}+ \Delta ) \phi\left(-X^{t,(\rho)}(s),B^x(s)\right) ds\right].
\end{align*}
Meanwhile, for $(t,x)\in(0,T]\times\overline \Omega$  the identities
 $f_\phi(t,x)= -D_{\infty}^{(\rho)}\phi(t,x)$,  $\Delta\phi (0,x)=\Delta\phi (t,x)$ and
\begin{align*}
\int_0^t  (\phi(t-r,x)-\phi(t,x))\rho(t,r)\,dr= \int_0^t  (\phi(0,x)-\phi(0,x))\rho(t,r)\,dr=0
\end{align*}
hold, and we can derive the equality
\begin{align*}
\mathbf E \left[\int_0^{\tau_{t,x}} (-D_{\infty}^{(\rho)}+ \Delta ) \phi\left(-X^{t,(\rho)}(s),B^x(s)\right) ds\right]
&= \mathbf E\left[ \int_0^{\tau_{t,x}} (f_\phi+\Delta \phi)\left(-X^{t,(\rho)}(s),B^x(s)\right) ds\right].
\end{align*}
Therefore, the generalized solution allows the following representation
{\small
\begin{align*}
  u(t,x) &= \mathbf E\left[\int_0^{\tau_{t,x}}\Delta\phi\left(-X^{t,(\rho)}(s),B^x(s)\right)ds\right] +\phi(0,x)+\mathbf E\left[\int_0^{\tau_{t,x}}(f_\phi +f)\left(-X^{t,(\rho)}(s),B^x(s)\right)ds\right]\\
&=\mathbf E \left[\int_0^{\tau_{t,x}} (-D_{\infty}^{(\rho)}+ \Delta ) \phi\left(-X^{t,(\rho)}(s),B^x(s)\right) ds\right]  + \phi(0,x) + \mathbf E\left[\int_0^{\tau_{t,x}}f\left(-X^{t,(\rho)}(s),B^x(s)\right)ds\right]\\
&=\mathbf E\left[ \phi\left(-X^{t,(\rho)}(\tau_{t,x}),B^x(\tau_{t,x})\right)\right]+ \mathbf E\left[\int_0^{\tau_{t,x}}f\left(-X^{t,(\rho)}(s),B^x(s)\right)ds\right]+ \phi(0,x)- \phi(t,x)\\
&= \mathbf E\left[ \phi\left(-X^{t,(\rho)}(\tau_0(t)),B^x(\tau_0(t))\right)\mathbf 1_{\{\tau_0(t)< \tau_\Omega(x)\}}\right] + \mathbf E\left[\int_0^{\tau_{t,x}}f\left(-X^{t,(\rho)}(s),B^x(s)\right)ds\right].
\end{align*}}
for all $(t,x)\in(0,T]\times\Omega$. This completes the proof of the theorem.
\end{proof}
\begin{remark}\label{rmk:noH1}
If assumption \ref{H1} does not hold, one shall modify the definition of a generalized solution requiring pointwise convergence everywhere on $(0,T]\times\Omega$ of the approximating sequence. This allows to run the argument of Theorem \ref{lem_sdoggs}-(ii) as long as one such sequence exists. This means that our data $g$ has to be a Baire class $1$  function
(which includes continuous functions but it is a smaller class than $B([0,T]\times\overline\Omega)$).
\end{remark}

\begin{remark}\label{rmk:lpconv1}
Note that every generalized solution is the pointwise limit on $[0,T]\times\overline\Omega$ of a sequence of solutions in the domain of the generator $\{u_n\}_{n\in\mathbb N}$, and from the stochastic representation we can infer that $\sup_n\|u_n\|_{C([0,T]\times\overline\Omega)}<\infty$. This implies the convergence $u_n\to u$ in $L^p((0,T)\times\Omega)$ for every $p\in[1,\infty)$.
\end{remark}
We now give a more explicit formula for the heat kernel of the solution in \eqref{thm:Caputo_SR} ($f=0$).
\begin{proposition}\label{prop:HK}
Let assumptions \ref{H0} and \ref{H1} hold true. Then
\begin{equation}
  \mathbf E\left[ \phi\left(-X^{t,(\rho)}(\tau_0(t)),B^x(\tau_0(t))\right)\mathbf 1_{\{\tau_0(t)<
\tau_\Omega(x)\}}\right]=\int_{-\infty}^0\int_\Omega \phi(r,y)H_{t,x}(r,y)\,dr\,dy,
\label{SRin}
\end{equation}
for every $(t,x)\in(0,T]\times\Omega$ and $\phi\in B((-\infty,0]\times\Omega)$, where
\[
H_{t,x}(r,y)=\int_0^t \rho(z,z-r)\left(\int_0^\infty p^\Omega_s(x,y)p_s^{(\rho)}(t,z)\,ds\right)dz.
\]
%
\end{proposition}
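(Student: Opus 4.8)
The plan is to express the left-hand side of \eqref{SRin} as a time integral by means of the Lévy system (compensation formula) of the pure-jump process $-X^{t,(\rho)}$, and then to evaluate that integral exactly as in the proof of Theorem \ref{lem_sdoggs}(ii). First I would record the jump structure of $-X^{t,(\rho)}$: on the invariant core $C^1_\infty(-\infty,T]$ the generator acts as $\mathcal L_\infty^{(\rho)}u(z)=\int_{-\infty}^z(u(w)-u(z))\rho(z,z-w)\,dw$, an integro-differential operator of pure-jump type (no drift, no diffusion) with jump kernel $N(z,dw):=\rho(z,z-w)\mathbf 1_{\{w<z\}}\,dw$. Consequently $-X^{t,(\rho)}$ is a semimartingale whose jump measure has compensator $N(-X^{t,(\rho)}(s-),dw)\,ds$, and it carries a Lévy system: for every nonnegative predictable $F$ with $F(\cdot,z,z)\equiv0$,
$$\mathbf E\Big[\sum_{s>0}F(s,-X^{t,(\rho)}(s-),-X^{t,(\rho)}(s))\Big]=\mathbf E\Big[\int_0^\infty\!\!\int_{\mathbb R}F(s,-X^{t,(\rho)}(s),w)\,N(-X^{t,(\rho)}(s),dw)\,ds\Big].$$
Since $-X^{t,(\rho)}$ is non-increasing, the event $\{-X^{t,(\rho)}(s-)>0\ge-X^{t,(\rho)}(s)\}$ holds for exactly one $s$, namely $s=\tau_0(t)$, with $-X^{t,(\rho)}(\tau_0(t))<0$ almost surely by the second half of \ref{H1}; moreover $\mathbf P[\tau_0(t)=\tau_\Omega(x)]=0$, as $\tau_0(t)$ is independent of $B^x$ and $\{\tau_\Omega(x)=s\}\subset\{B^x(s)\in\partial\Omega\}$ is a null event for each $s>0$.

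Then I would apply the Lévy system with $F(s,z',w)=\phi(w,B^x(s))\mathbf 1_{\{z'>0\ge w\}}\mathbf 1_{\{s\le\tau_\Omega(x)\}}$ — predictable in $s$ because $B^x$ is continuous, so $B^x(s-)=B^x(s)$ at the jump times of the independent $-X^{t,(\rho)}$, and $[0,\tau_\Omega(x)]$ is predictable — first to $|\phi|$ and then to $\phi^{\pm}$. Using $N(z,dw)\mathbf 1_{\{z>0,\,w\le0\}}=\rho(z,z-w)\mathbf 1_{\{w\le0\}}\,dw$ together with $\mathbf 1_{\{-X^{t,(\rho)}(s-)>0\}}=\mathbf 1_{\{s\le\tau_0(t)\}}$, the left-hand side of \eqref{SRin} becomes $\mathbf E[\int_0^{\tau_0(t)\wedge\tau_\Omega(x)}g(-X^{t,(\rho)}(s),B^x(s))\,ds]$ with $g(z,y):=\int_{-\infty}^0\phi(w,y)\rho(z,z-w)\,dw$. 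By \ref{H0}, $\int_z^\infty\rho(z,r)\,dr<\infty$ for each $z>0$, so $|g(z,y)|\le\|\phi\|_\infty\int_z^\infty\rho(z,r)\,dr$; applying the Lévy system with $F\equiv1$ on $[0,\tau_0(t)\wedge\tau_\Omega(x)]$ yields $\mathbf E[\int_0^{\tau_0(t)\wedge\tau_\Omega(x)}\int_{-X^{t,(\rho)}(s)}^\infty\rho(-X^{t,(\rho)}(s),r)\,dr\,ds]=\mathbf P[\tau_0(t)\le\tau_\Omega(x)]\le1$, whence $\mathbf E[\int_0^{\tau_0(t)\wedge\tau_\Omega(x)}|g|(-X^{t,(\rho)}(s),B^x(s))\,ds]\le\|\phi\|_\infty<\infty$. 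This validates the signed application of the compensation formula and the Fubini steps below, and in particular shows that no smoothness of $\phi$ is needed.

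Finally I would evaluate $\mathbf E[\int_0^{\tau_0(t)\wedge\tau_\Omega(x)}g(-X^{t,(\rho)}(s),B^x(s))\,ds]$ exactly as in the proof of Theorem \ref{lem_sdoggs}(ii): by independence of $-X^{t,(\rho)}$ and $B^x$, assumption \ref{H1}, the density $p_s^\Omega$, and the identification in Proposition \ref{3processesresults}(i) of the sub-density of $-X^{t,(\rho)}(s)$ on $(0,t]$ with $p_s^{(\rho)}(t,\cdot)\mathbf 1_{(0,t]}$, it equals $\int_0^\infty\int_\Omega\int_{(0,t]}g(z,y)p_s^{(\rho)}(t,z)p_s^\Omega(x,y)\,dz\,dy\,ds$. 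Inserting the definition of $g$ and using Fubini once more to pull $\int_{-\infty}^0\int_\Omega$ outside gives $\int_{-\infty}^0\int_\Omega\phi(r,y)H_{t,x}(r,y)\,dr\,dy$ with $H_{t,x}(r,y)=\int_{(0,t]}\rho(z,z-r)\big(\int_0^\infty p_s^\Omega(x,y)p_s^{(\rho)}(t,z)\,ds\big)\,dz$, which is precisely \eqref{SRin}. (Taking $\phi\equiv1$ in this computation also records that $H_{t,x}\ge0$ and $\int_{-\infty}^0\int_\Omega H_{t,x}\,dr\,dy=\mathbf P[\tau_0(t)<\tau_\Omega(x)]\le1$.)

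The step requiring the most care is the first one: rigorously asserting that $-X^{t,(\rho)}$ carries a Lévy system with the kernel $N$ read off from the generator. I would base this on the martingale-problem characterization — $C^1_\infty(-\infty,T]$ is an invariant core on which $\mathcal L_\infty^{(\rho)}$ is the stated pure-jump integro-differential operator, so $-X^{t,(\rho)}$ solves the associated martingale problem and its jump measure has compensator $N(-X^{t,(\rho)}(s-),dw)\,ds$ — invoking the standard theory of Feller/Lévy-type processes (cf. \cite{Schilling}). One also has to confirm that $\tau_0(t)$ is genuinely a crossing jump time (no creeping across $0$), which follows from the absence of a first-order term in $-D_{\infty}^{(\rho)}$ together with the no-atom condition in \ref{H1}. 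An alternative route that avoids Lévy systems is to prove \eqref{SRin} first for $\phi$ in the variable-separable class of Remark \ref{rem:H2}(i) via Theorem \ref{lem_sdoggs}(i),(iii), and then extend: both sides are bounded linear functionals of $\phi$ agreeing on a subset dense in $C_\infty((-\infty,0]\times\Omega)$, hence define the same (sub-probability) measure on $(-\infty,0]\times\Omega$, hence agree for all $\phi\in B((-\infty,0]\times\Omega)$; but identifying the density of that measure with $H_{t,x}$ still amounts to essentially the same jump computation.
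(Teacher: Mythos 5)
Your proof is essentially correct, but it takes a genuinely different route from the paper. The paper never touches the jump structure of $-X^{t,(\rho)}$ directly: it first takes $\phi$ in the separable smooth class $\mathrm{Span}\{C^1_\infty(-\infty,T]\cdot \mathrm{Dom}(\Delta_\Omega)\}$ vanishing on $[-n^{-1},T]$ (so that \ref{H2} holds, $\Delta\phi=0$ on $(0,T]$ and $-D_{\infty}^{(\rho)}\phi$ reduces to the historical integral), applies the Dynkin formula for the compound semigroup of Corollary \ref{cor32}-(ii), computes the resulting time integral with the densities $p_s^{(\rho)}(t,\cdot)$ and $p_s^{\Omega}(x,\cdot)$ exactly as you do in your last step, and only then extends to all $\phi\in B((-\infty,0]\times\Omega)$ by monotone convergence and the Riesz--Markov--Kakutani theorem. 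You instead obtain the identity in one stroke for arbitrary bounded measurable $\phi$ by reading the jump kernel $N(z,dw)=\rho(z,z-w)\mathbf 1_{\{w<z\}}dw$ off the generator core and applying the L\'evy-system/compensation formula at the single crossing jump, which is precisely your alternative description of the event $\{\tau_0(t)<\tau_\Omega(x)\}$; the payoff is that no approximation or Riesz-representation step is needed and the finiteness bound $\int H_{t,x}\le 1$ comes out for free, while the cost is the heavier machinery you must justify: (a) that the Feller process generated by the closure of $-D_{\infty}^{(\rho)}$ on the invariant core indeed carries a L\'evy system with exactly this kernel (true, via the martingale problem for L\'evy-type Feller generators, but it is the analogue of the density/Dynkin work the paper does and deserves a precise citation), and (b) your claim that the unique jump with $-X^{t,(\rho)}(s-)>0\ge -X^{t,(\rho)}(s)$ occurs at $s=\tau_0(t)$: assumption \ref{H1} only excludes $-X^{t,(\rho)}(\tau_0(t))=0$, so you must also rule out a crossing jump from left limit exactly $0$ (jumps accumulating so that $-X^{t,(\rho)}(\tau_0(t)-)=0$). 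This is not covered by ``no first-order term'' alone, but it is fixable: since the jump distribution $\rho(z,z-w)dw$ is absolutely continuous, the process a.s. spends zero Lebesgue time at the single state $0$, so the compensation formula itself shows there are a.s. no jumps from left limit $0$, and \ref{H1} then excludes continuous crossing. With these two points written out, your argument is complete and yields the same kernel $H_{t,x}$ as the paper; your closing ``alternative route'' (prove the formula on the separable dense class and extend by density) is in fact the paper's own strategy.
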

\begin{proof}
By \ref{H1}, it is enough to prove formula (\ref{SRin})
on the set $\{-X^{t,(\rho)}(\tau_0(t))<0\}$.
Fix $(t,x)\in (0,T]\times\Omega$. Let $\phi\in \text{Span}\{C^1_\infty(-\infty,T]\cdot \text{Dom}(\Delta_\Omega)\}$
such that  $\phi=0$ on $[-n^{-1},T]$ for $n\in\mathbb N$. By Remark \ref{rem:H2}-(i) $\phi$ satisfies \ref{H2}. Then by
Dynkin formula along with $\mathcal L_\Omega^{(\rho),\infty}\phi=(-D^{(\rho)}_{\infty}+\Delta)\phi $ by Corollary \ref{cor32} and $\Delta \phi=0$ on $(0,T]$,
we have that
\begin{align*}
  u(t,x):&= \mathbf E\left[ \phi\left(-X^{t,(\rho)}(\tau_0(t)),B^x(\tau_0(t))\right)\mathbf 1_{\{\tau_0(t)< \tau_\Omega(x)\}}\right]\\
&= \mathbf E\left[\int_0^{\tau_{t,x}} -D^{(\rho)}_\infty\phi\left(-X^{t,(\rho)}(s),B^x(s)\right)\,ds\right]\\
&= \int_0^\infty\mathbf E\left[\mathbf 1_{\{s<\tau_{0}(t)\}} \int_{-X^{t,(\rho)}(s)}^\infty \phi\left(-X^{t,(\rho)}(s)-r,B^x(s\wedge \tau_\Omega(x))\right)\rho\left(-X^{t,(\rho)}(s),r\right)\,dr\right]ds
\end{align*}
Next,  using the independence of $-X^{t,(\rho)}(s\wedge\tau_0(t))$ and $B^x(s\wedge \tau_\Omega(x))$, $\{s<\tau_{0}(t)\}=\{0<-X^{t,(\rho)}(s)\}$, Fubini's Theorem and standard change of variables, we obtain
\begin{align*}
 u(t,x)=&\int_\Omega \int_0^\infty\left(\int_0^t  \left( \int_{z}^\infty \phi\left(z-r,y\right)\rho(z,r)\,dr\right)p^{(\rho)}_s(t,z)\,dz\right)p^\Omega_s(x,y)\,ds\,dy\\
 =&\, \int_\Omega \int_0^\infty\left(\int_0^t  \left( \int_{-\infty}^0 \phi\left(r,y\right)\rho(z,z-r)\,dr\right)p^{(\rho)}_s(t,z)\,dz\right)p^\Omega_s(x,y)\,ds\,dy\\
 =&\,  \int_{-\infty}^0\int_\Omega \phi\left(r,y\right)\left( \int_0^t  \rho(z,z-r)\int_0^\infty\,p^{(\rho)}_s(t,z)p^\Omega_s(x,y)\,ds\,dz\right)dy\,dr.
\end{align*}
By a density argument the identity (\ref{SRin}) holds  for every $\phi \in B((-\infty, n^{-1})\times\Omega)\cap C((-\infty, n^{-1})\times\Omega)$ for every $n\in\mathbb N$. Considering the non-negative increasing sequence $\phi_n =\mathbf 1_{(-\infty,n^{-1})\times\Omega}$, $n\in\mathbb N$, by Monotone Convergence Theorem one can pass to the limit in both sides of (\ref{SRin}),  confirming that  $H_{t,x}$ induces a finite measure on $(-\infty,0)\times\Omega$, as the right hand side of (\ref{SRin}) is finite. By another density argument the equality (\ref{SRin}) holds for every $\phi\in C_\infty((-\infty,0)\times \Omega)\cap\{f(0-)=f(x)=0,\, x\in\partial\Omega\}$, and we are done by Riesz-Markov-Kakutani representation Theorem \cite[Theorem 1.7.3]{KV0}.
\end{proof}

\begin{remark}\label{rmk:lpconv2}
Suppose that \ref{H0} and \ref{H1} hold, and that $\phi_n,\phi \in B((-\infty,0]\times\overline\Omega)$, for $n\in\mathbb N$, such that $\phi_n\to\phi$ a.e. on $(-\infty,0]\times\overline\Omega$, $\sup_n\|\phi_n\|_{B((-\infty,0]\times\overline\Omega)}<\infty$,  and $f\in B((0,T]\times\Omega)$. Then Proposition \ref{prop:HK} and Dominated Convergence Theorem imply that $u_n\to u$  pointwise on $(0,T]\times\Omega$ and $\sup_n\|u_n\|_{B((-\infty,0]\times\overline\Omega)}<\infty$. Here $u_n$ is defined as \eqref{thm:Caputo_SR} for $\phi_n, f$,  $n\in\mathbb N$, and $u$ is defined as \eqref{thm:Caputo_SR} for $\phi, f$. This in turn implies the convergence $u_n\to u$ in $L^p((0,T)\times\Omega)$ for each $p\in[1,\infty)$.
\end{remark}

\section{Stochastic representation for solutions in weak sense}\label{sec:weak}
In Section \ref{sec:generaltheory},  the stochastic representation of the solution to the nonlocal-in-time evolution model \eqref{preRL} is established
in case that the data is smooth and compatible. The aim of this section is to show that
the representation \eqref{thm:Caputo_SR} still provides a solution of \eqref{preRL} in the weak sense,
even though the data does not satisfies the smoothness and compatibility conditions required in Section \ref{sec:generaltheory}.
Now we denote by  $W^{1,p}(\Omega)$ the standard Sobolev space of $p$-integrable functions on $\Omega$ with
$p$-integrable weak first derivatives, $p\ge1$. Denote by $H^{-1}(\Omega)$ the dual of $H_0^{1}(\Omega)$,
where $H_0^{1}(\Omega)$ is the closure of $C_c^\infty(\Omega)$ in $W^{1,2}(\Omega)$.

In case that the kernel is time-independent, i.e., $\rho(t,r)\equiv\rho(r)$, the existence and uniqueness  of the weak solution \eqref{DYZws} has been confirmed in \cite{DYZ17}.
The uniqueness argument for the more general variables-separable kernel $\rho(t,r)=p(r)q(r)$ is similar, so we only present some useful results here and omit some similar detailed proof in order to avoid redundancy. 
We do not prove uniqueness of weak solutions for our general time-dependent kernel  $\rho(t,r)$.

\begin{lemma}\label{lem:inte-p}
Suppose that $u \in B(-\infty,T) \cap L^1(-\infty, T)$, and $v \in C_c^\infty(0, T)$ with zero extension out of the interval $(0,T)$. Further, we suppose that
\begin{equation}\label{eqn:domin}
    \int_{0}^T  \int_{0}^\infty |u(t)-u(t-r)| \rho(t,r) \,dr \,dt < \infty.
\end{equation}
Then  it holds that
\begin{equation*}
     \int_{0}^T D_{\infty}^{(\rho)} u(t) v(t)\,dt =- \int_{-\infty}^T u(t) (D_{\infty}^{(\rho),*} v) (t)\,dt
\end{equation*}
with
\begin{equation}\label{eqn:Gdstar}
 D_{\infty}^{(\rho),*} v(t) = - \int_0^{\infty} v(t) \rho(t,r)-v(t+r)\rho(t+r,r) \,dr.
\end{equation}
\end{lemma}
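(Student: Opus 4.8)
The plan is to unfold $D_\infty^{(\rho)}u$, interchange integrals, re-index the ``$u(t-r)$'' contribution via the substitution $s=t-r$, and then remove an auxiliary cut-off by dominated convergence. First I would note that, $v$ being bounded, hypothesis \eqref{eqn:domin} gives
\[
\int_0^T\!\!\int_0^\infty |u(t)-u(t-r)|\,\rho(t,r)\,|v(t)|\,dr\,dt\le \|v\|_\infty\int_0^T\!\!\int_0^\infty |u(t)-u(t-r)|\,\rho(t,r)\,dr\,dt<\infty ,
\]
so $\int_0^T D_\infty^{(\rho)}u(t)v(t)\,dt=\int_0^T\!\int_0^\infty(u(t)-u(t-r))\rho(t,r)v(t)\,dr\,dt$ with the double integral absolutely convergent. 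One cannot split this into a ``$u(t)$''-part and a ``$u(t-r)$''-part directly, since $\int_0^\infty\rho(t,r)\,dr$ may diverge at $r=0$; so I would introduce $\varepsilon>0$ and set $I_\varepsilon:=\int_0^T\!\int_\varepsilon^\infty(u(t)-u(t-r))\rho(t,r)v(t)\,dr\,dt$. Dominated convergence with dominant $\|v\|_\infty|u(t)-u(t-r)|\rho(t,r)$ shows $I_\varepsilon\to\int_0^T D_\infty^{(\rho)}u(t)v(t)\,dt$ as $\varepsilon\downarrow0$, and on $\{r>\varepsilon\}$ the kernel is genuinely integrable, because $\int_\varepsilon^\infty\sup_t\rho(t,r)\,dr\le(1\wedge\varepsilon)^{-1}\int_0^\infty(1\wedge r)\sup_t\rho(t,r)\,dr<\infty$ by \ref{H0}.

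Next I would split $I_\varepsilon=\int_0^T u(t)v(t)\big(\int_\varepsilon^\infty\rho(t,r)\,dr\big)dt-\int_0^T\!\int_\varepsilon^\infty u(t-r)\rho(t,r)v(t)\,dr\,dt$, both pieces now finite (using $u\in L^1(-\infty,T)$ and the bound above). In the second piece I substitute $s=t-r$ and apply Fubini once more — justified since $\int_\varepsilon^\infty\int_{-\infty}^T|u(s)|\,\sup_t\rho(t,r)\,\|v\|_\infty\,ds\,dr=\|v\|_\infty\|u\|_{L^1(-\infty,T)}\int_\varepsilon^\infty\sup_t\rho(t,r)\,dr<\infty$ — rewriting it as $\int_{-\infty}^T u(s)\big(\int_\varepsilon^\infty\rho(s+r,r)v(s+r)\,dr\big)ds$, where $v$ automatically restricts the $r$-range to a bounded interval and forces $s<T$. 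Since $v$ vanishes on $(-\infty,0]$, the first piece may likewise be written with $\int_{-\infty}^T$, giving
\[
I_\varepsilon=\int_{-\infty}^T u(t)\,F_\varepsilon(t)\,dt,\qquad F_\varepsilon(t):=\int_\varepsilon^\infty\big[v(t)\rho(t,r)-v(t+r)\rho(t+r,r)\big]dr .
\]

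Finally I would let $\varepsilon\downarrow0$ here. Writing $v(t)\rho(t,r)-v(t+r)\rho(t+r,r)=v(t)[\rho(t,r)-\rho(t+r,r)]+[v(t)-v(t+r)]\rho(t+r,r)$ and using the mean value theorem in the first variable of $\rho$ (which is $C^1$ in $t$ by \ref{H0}) together with the smoothness of $v$, one obtains $|v(t)\rho(t,r)-v(t+r)\rho(t+r,r)|\le r\big(\|v\|_\infty\sup_\tau|\partial_t\rho(\tau,r)|+\|v'\|_\infty\sup_\tau\rho(\tau,r)\big)$ for $r\le1$ and $\le2\|v\|_\infty\sup_\tau\rho(\tau,r)$ for $r>1$; the two integrability conditions in \ref{H0} then bound $|F_\varepsilon(t)|$ by a constant $C$ uniform in $t$ and $\varepsilon$, and also yield $F_\varepsilon(t)\to F_0(t):=\int_0^\infty[v(t)\rho(t,r)-v(t+r)\rho(t+r,r)]\,dr$ pointwise. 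Since $C|u(\cdot)|\in L^1(-\infty,T)$, dominated convergence gives $I_\varepsilon\to\int_{-\infty}^T u(t)F_0(t)\,dt=-\int_{-\infty}^T u(t)(D_\infty^{(\rho),*}v)(t)\,dt$ by \eqref{eqn:Gdstar}, and comparison with the first limit finishes the proof.

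I expect the main obstacle to be precisely the $r=0$ singularity of $\rho$: it is what forces the $\varepsilon$-truncation, and the delicate point is then verifying that the adjoint-side integrand $F_0(t)$ is well defined and bounded uniformly in $t$ — which requires using the smoothness of the test function $v$ and the $\partial_t\rho$-integrability in \ref{H0} \emph{together}, neither being enough on its own. The repeated Fubini applications also require keeping track of which hypothesis ($u\in L^1$, boundedness and compact support of $v$, \eqref{eqn:domin}, or \ref{H0}) licenses each exchange, but these are routine once the truncation is in place.
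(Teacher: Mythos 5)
Your proof is correct. Note that the paper actually states Lemma \ref{lem:inte-p} without proof (it is omitted as being analogous to the time-independent case in \cite{DYZ17}), so there is no in-paper argument to compare against; your route — truncating the kernel to $r>\varepsilon$, splitting, substituting $s=t-r$ with Fubini justified by $u\in L^1(-\infty,T)$ and the integrability of $\sup_t\rho(t,r)$ away from $r=0$, then removing the truncation by dominated convergence using the smoothness of $v$ together with both integrability conditions in \ref{H0} — is precisely the standard duality argument intended, every interchange you invoke is properly licensed, and as a by-product your uniform bound on $F_0$ shows $D_{\infty}^{(\rho),*}v$ in \eqref{eqn:Gdstar} is well defined and bounded, consistent with Corollary \ref{cor:bddG-r2}.
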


The next lemma gives an upper bound of $\Gd$ for smooth functions in Sobolev spaces.
\begin{lemma}\label{lem:bddG-r}
Let the kernel $\rho$ satisfy \ref{H0}. Then the operator $\Gd$  defined by \eqref{eqn:op} satisfies
$$  \| \Gd v\|_{L^p(-\infty,T)} \le C   \|v\|_{W^{1,p}(-\infty,T)},\quad v\in W^{1,p}(-\infty,T).  $$
with $p\in[1,\infty]$.
\end{lemma}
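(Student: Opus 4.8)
The plan is to bound $\Gd v(t)=\int_0^\infty\bigl(v(t)-v(t-r)\bigr)\rho(t,r)\,dr$ by splitting the $r$-integral at $r=1$, treating the near-diagonal part $r\in(0,1)$ via the fundamental theorem of calculus and the far part $r\ge1$ via the triangle inequality; in both regimes the estimate reduces to a one-sided convolution against an $L^1(\mathbb R^+)$ kernel, so Minkowski's integral inequality (equivalently, Young's inequality with an $L^1$ weight, valid for every $p\in[1,\infty]$) closes the argument. Throughout I write $M(r):=\sup_t\rho(t,r)$, so that $\rho(t,r)\le M(r)$ and, by the first integrability bound in \ref{H0}, $\int_0^\infty(1\wedge r)M(r)\,dr<\infty$; this is the only part of \ref{H0} that is used. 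I will also use repeatedly that translation by $u\ge0$ is a contraction on $L^p(-\infty,T)$, i.e.\ $\|w(\cdot-u)\|_{L^p(-\infty,T)}\le\|w\|_{L^p(-\infty,T)}$, since $T-u\le T$.

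For the near-diagonal part, a $W^{1,p}(-\infty,T)$ function admits an absolutely continuous representative, so $v(t)-v(t-r)=\int_0^r v'(t-s)\,ds$ for $r\in(0,1)$; applying Minkowski's inequality first in $r$ and then in $s$, together with the translation contraction, gives
\[
\Bigl\|\,t\mapsto\int_0^1\bigl(v(t)-v(t-r)\bigr)\rho(t,r)\,dr\,\Bigr\|_{L^p(-\infty,T)}\le\int_0^1 M(r)\,r\,\|v'\|_{L^p}\,dr=\Bigl(\int_0^1 rM(r)\,dr\Bigr)\|v'\|_{L^p}.
\]
For the far part, splitting $v(t)-v(t-r)$ and using $\int_1^\infty M(r)\,dr<\infty$ together with the translation contraction,
\[
\Bigl\|\,t\mapsto\int_1^\infty\bigl(v(t)-v(t-r)\bigr)\rho(t,r)\,dr\,\Bigr\|_{L^p(-\infty,T)}\le 2\Bigl(\int_1^\infty M(r)\,dr\Bigr)\|v\|_{L^p}.
\]
Since $\int_0^1 rM(r)\,dr+2\int_1^\infty M(r)\,dr\le 2\int_0^\infty(1\wedge r)M(r)\,dr$, adding the two bounds yields $\|\Gd v\|_{L^p(-\infty,T)}\le C\|v\|_{W^{1,p}(-\infty,T)}$ with $C=2\int_0^\infty(1\wedge r)\sup_t\rho(t,r)\,dr$.

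There is no genuine obstacle here; the only points deserving care are the joint measurability of $(t,r)\mapsto\bigl(v(t)-v(t-r)\bigr)\rho(t,r)$ (immediate from continuity of $\rho$ and measurability of $v$) and the a.e.\ finiteness and pointwise validity of the defining hypersingular integral for a generic $v\in W^{1,p}$. For $p<\infty$ one may, if one prefers a fully transparent derivation, first prove the inequality for smooth $v$ (e.g.\ $v\in C^1(-\infty,T]$ with $v,v'\in L^p$), where Fubini and the fundamental theorem of calculus are immediate, and then extend to all of $W^{1,p}$ by density of smooth functions in $W^{1,p}(-\infty,T)$; the displayed estimate shows that $\Gd$ extends to a bounded operator $W^{1,p}\to L^p$, and the representation of $\Gd v$ by the integral for a.e.\ $t$ follows because the same computation also gives $\int_{-\infty}^T\int_0^\infty|v(t)-v(t-r)|\rho(t,r)\,dr\,dt<\infty$ (which, incidentally, is exactly the domination hypothesis \eqref{eqn:domin} required in Lemma \ref{lem:inte-p}). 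For $p=\infty$, where smooth functions are not dense, one argues directly with the Lipschitz representative: $|v(t)-v(t-r)|\le r\|v'\|_{L^\infty}$ for $r\in(0,1)$ and $|v(t)-v(t-r)|\le 2\|v\|_{L^\infty}$ for $r\ge1$, and integrating these against $\rho(t,r)\le M(r)$ over $r$ gives the bound pointwise in $t$.
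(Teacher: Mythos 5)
Your proof is correct and follows essentially the same route as the paper's: split the $r$-integral at $r=1$, control the near part via the $L^p$-difference estimate $\|v(\cdot)-v(\cdot-r)\|_{L^p(-\infty,T)}\le r\|v'\|_{L^p(-\infty,T)}$ and the far part via $2\|v\|_{L^p(-\infty,T)}$, and integrate against $\sup_t\rho(t,r)$ using \ref{H0}; the only differences are that you package the computation with Minkowski's integral inequality where the paper uses H\"older's inequality plus Fubini, and that you treat $p=\infty$ explicitly rather than ``analogously''. One small caveat, immaterial to the lemma itself: for $p>1$ your parenthetical claim that the same computation yields $\int_{-\infty}^T\int_0^\infty|v(t)-v(t-r)|\rho(t,r)\,dr\,dt<\infty$ does not follow, since an $L^p$ bound on the unbounded interval $(-\infty,T)$ does not imply an $L^1$ bound; it does hold over the finite interval $(0,T)$, which is what hypothesis \eqref{eqn:domin} of Lemma \ref{lem:inte-p} actually requires.
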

\begin{proof}
We only prove the result for  $p\in[1,\infty)$, as  the case  $p=\infty$ follows analogously.
By H\"older's inequality and assumption \ref{H0} we have that for $p\in(1,\infty)$
\begin{align*}
&\qquad\int_{-\infty}^T \left(\int_{0}^1 |u(t)-u(t-r)| \rho(t,r) \,ds \right)^p dt \\
&\le  \int_{-\infty}^T  \int_{0}^1 \frac{|u(t)-u(t-r)|^p} {r^p} r\rho(t,r)  \,dr
\left(\int_{0}^1 r\rho(t,r)\,dr\right)^{p-1} dt  \\
&\le c  \int_{-\infty}^T  \int_{0}^1 \frac{|u(t)-u(t-r)|^p} {r^p} r\rho(t,r)  \,ds \,dt \\
&\le  c \int_{0}^1 {r^{1-p}}  |\max_{t}\rho(t,r)| \int_{-\infty}^T  |u(t)-u(t-r)|^p \,dt \,dr \\
&\le  c \int_{0}^1 r |\max_{t}\rho(t,r)|  \,dr \|  u \|_{W^{1,p}(-\infty,T)}^p \le c\|  u \|_{W^{1,p}(-\infty,T)}^p,
\end{align*}
where we apply the fact that
$\int_{-\infty}^T  |u(t)-u(t-r)|^p \,dt \le c |r|^p \| u \|_{W^{1,p}(-\infty,T)}^p$ in the second last inequality.
On the other hand, we have the following estimate
\begin{align*}
&\qquad\int_{-\infty}^T \left(\int_{1}^\infty |u(t)-u(t-r)| \rho(t,r) \,dr \right)^p dt \\
&\le  \int_{-\infty}^T  \int_{1}^\infty  |u(t)-u(t-r)|^p \rho(t,r)  \,dr
\left(\int_{1}^\infty   \rho(t,r) \,dr \right)^{p-1} dt \\
&\le c  \int_{-\infty}^T  \int_{0}^1  |u(t)-u(t-r)|^p \rho(t,r)  \,dr \,dt \\
&\le  c \int_{1}^\infty    \max_{t}\rho(t,r) \int_{-\infty}^T  |u(t)-u(t-r)|^p \,dt \,dr \\
&\le  c \int_{1}^\infty   \max_{t}\rho(t,r)  \,dr \|  u \|_{L^{ p}(-\infty,T)}^p \le c\|  u \|_{W^{1,p}(-\infty,T)}^p.\\
\end{align*}
Then we obtain the desired assertion.
\end{proof}

Similar argument yields the following a priori bound for the dual operator $D_{\infty}^{(\rho),*}$ given by \eqref{eqn:Gdstar}.
\begin{lemma}\label{lem:bddG-r2}
Let the kernel $\rho$ satisfy \ref{H0} and let the operator $D_{\infty}^{(\rho),*} $ be defined by \eqref{eqn:Gdstar}. Then for any $v\in {W^{1,p}(\mathbb{R})}$ with $p\in[1,\infty]$, it holds that
$$  \| D_{\infty}^{(\rho),*} v\|_{L^p (\mathbb{R})} \le C   \| v \|_{W^{1,p}(\mathbb{R})}.$$
\end{lemma}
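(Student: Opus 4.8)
The plan is to mimic the proof of Lemma \ref{lem:bddG-r} almost verbatim, since the dual operator $D_{\infty}^{(\rho),*}$ in \eqref{eqn:Gdstar} has the same structure as $\Gd$: it is an integral over $r\in(0,\infty)$ of a difference quotient weighted by a kernel that behaves like $\sup_t\rho(t,r)$ (or its shift $\sup_t\rho(t+r,r)$). First I would rewrite
\[
D_{\infty}^{(\rho),*} v(t) = -\int_0^\infty \big(v(t)-v(t+r)\big)\rho(t,r)\,dr - \int_0^\infty v(t+r)\big(\rho(t,r)-\rho(t+r,r)\big)\,dr,
\]
so that the operator splits into a ``difference-quotient'' piece and a ``kernel-difference'' piece. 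The first piece is handled exactly as in Lemma \ref{lem:bddG-r}: split the $r$-integral at $r=1$, apply H\"older's inequality in $r$ with weight $r\rho(t,r)$ on $(0,1)$ and weight $\rho(t,r)$ on $(1,\infty)$, use Fubini, and invoke the translation-invariant bound $\int_{\mathbb R}|v(t)-v(t+r)|^p\,dt \le c|r|^p\|v\|_{W^{1,p}(\mathbb R)}^p$ together with the two finiteness conditions $\int_0^\infty(1\wedge r)\sup_t\rho(t,r)\,dr<\infty$ from \ref{H0}. This gives the bound $c\|v\|_{W^{1,p}(\mathbb R)}$ for that part.

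For the second piece I would use the mean value theorem / fundamental theorem of calculus in the first variable of $\rho$: $|\rho(t,r)-\rho(t+r,r)| \le r\,\sup_t|\partial_t\rho(t,r)|$, so that
\[
\Big| \int_0^\infty v(t+r)\big(\rho(t,r)-\rho(t+r,r)\big)\,dr \Big| \le \int_0^\infty |v(t+r)|\, r\,\sup_t|\partial_t\rho(t,r)|\,dr.
\]
Taking the $L^p(\mathbb R)$ norm in $t$ and applying Minkowski's integral inequality together with the translation invariance of Lebesgue measure yields $\le \big(\int_0^\infty r\,\sup_t|\partial_t\rho(t,r)|\,dr\big)\|v\|_{L^p(\mathbb R)}$, and $\int_0^\infty(1\wedge r)\sup_t|\partial_t\rho(t,r)|\,dr<\infty$ from \ref{H0} controls the large-$r$ part; for small $r$ one also uses $r \le 1\wedge r$ on $(0,1)$. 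Combining the two pieces gives $\|D_{\infty}^{(\rho),*}v\|_{L^p(\mathbb R)} \le C\|v\|_{W^{1,p}(\mathbb R)}$, and as in Lemma \ref{lem:bddG-r} the case $p=\infty$ follows by the analogous (simpler, sup-norm) estimates, while $p=1$ is the direct Fubini estimate without H\"older.

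The main obstacle, such as it is, is the extra kernel-difference term that does not appear in Lemma \ref{lem:bddG-r}: one must make sure the hypothesis \ref{H0} actually provides integrability of $r\mapsto r\sup_t|\partial_t\rho(t,r)|$ near $0$ and of $\sup_t|\partial_t\rho(t,r)|$ near $\infty$ — both of which are exactly the content of the second displayed condition in \ref{H0}. A minor bookkeeping point is that we work over all of $\mathbb R$ here (rather than $(-\infty,T)$), so $v(t+r)$ stays in the domain and all translations are genuinely measure-preserving; no boundary terms arise. Everything else is a routine repetition of the estimates already carried out, so I would state the proof briefly as ``the same argument as in Lemma \ref{lem:bddG-r}, together with the mean value estimate $|\rho(t,r)-\rho(t+r,r)|\le r\sup_t|\partial_t\rho(t,r)|$ and the second condition in \ref{H0}.''
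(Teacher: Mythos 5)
Your decomposition is exactly the paper's: write $D_{\infty}^{(\rho),*}v = I_1 + I_2$ with $I_1(t)=\int_0^\infty (v(t+r)-v(t))\rho(t,r)\,dr$ and $I_2(t)=\int_0^\infty v(t+r)(\rho(t,r)-\rho(t+r,r))\,dr$, and your treatment of $I_1$ (repeat Lemma \ref{lem:bddG-r}) matches the paper. The gap is in the large-$r$ part of $I_2$. You apply the mean value bound $|\rho(t,r)-\rho(t+r,r)|\le r\,\sup_t|\partial_t\rho(t,r)|$ on all of $(0,\infty)$, which leads to the constant $\int_0^\infty r\,\sup_t|\partial_t\rho(t,r)|\,dr$. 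Assumption \ref{H0} only gives $\int_0^\infty (1\wedge r)\sup_t|\partial_t\rho(t,r)|\,dr<\infty$, i.e.\ integrability of $\sup_t|\partial_t\rho(t,\cdot)|$ \emph{without} the factor $r$ on $(1,\infty)$; the extra factor $r$ you pick up from the mean value theorem is not controlled there. For instance, $\rho(t,r)=(1+\tfrac12\sin t)\,g(r)$ with $g(r)\asymp r^{-\alpha-1}$ near $0$ and $g(r)\asymp r^{-2}$ near $\infty$ satisfies \ref{H0}, yet $\int_1^\infty r\,\sup_t|\partial_t\rho(t,r)|\,dr=\infty$, so your estimate cannot be closed as stated (your own remark that \ref{H0} gives integrability of $\sup_t|\partial_t\rho(t,r)|$ near $\infty$ is correct, but that is weaker than what your displayed bound requires).

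The fix is the one the paper uses: split $I_2$ at $r=1$. On $(0,1)$ your mean value (fundamental theorem of calculus) argument is fine, since there $r=1\wedge r$ and \ref{H0} gives $\int_0^1 r\,\sup_t|\partial_t\rho(t,r)|\,dr<\infty$. On $(1,\infty)$ do \emph{not} use the derivative at all: bound $|\rho(t,r)-\rho(t+r,r)|\le \rho(t,r)+\rho(t+r,r)$ and use $\int_1^\infty \sup_t\rho(t,r)\,dr<\infty$, which follows from the \emph{first} integrability condition in \ref{H0}; then Fubini (or your Minkowski argument) and translation invariance give the bound $c\|v\|_{L^p(\mathbb R)}$ for that piece as well. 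With this modification your proof coincides with the paper's (the paper runs the $I_2$ estimates via H\"older plus Fubini rather than Minkowski's integral inequality, but that difference is immaterial).
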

\begin{proof}
First, we use the following splitting
\begin{equation*}
 D_{\infty}^{(\rho),*} v(t) = \int_0^{\infty} ( v(t+r) - v(t))\rho(t,r) \,dr + \int_0^{\infty}  v(t+r)(\rho(t,r)-\rho(t+r)) \,dr=I_1+I_2.
\end{equation*}
Now using the same argument as that in Lemma \ref{lem:bddG-r}, we derive that for $p\in[1,\infty)$
$$  \| I_1\|_{L^p (\mathbb{R})} \le C   \| v \|_{W^{1,p}(\mathbb{R})}.$$
Therefore it suffices to  bound  $I_2$. For  $p\in[1,\infty)$, by H\"older's inequality and assumption \ref{H0} we have that
\begin{align*}
&\int_{-\infty}^\infty \left(\int_{0}^1 |v(t+r)| |\rho(t,r)-\rho(t+r)|\,dr \right)^p \,dt\\
\le& \int_{-\infty}^\infty \int_{0}^1  |v(t+r)|^p |\rho(t,r)-\rho(t+r)|\,dr \left(\int_{0}^1 |\rho(t,r)-\rho(t+r,r)|\,dr\right)^{p-1} \,dt.
\end{align*}
Then we observe that
\begin{align*}
 \int_{0}^1 |\rho(t,r)-\rho(t+r,r)|\,dr \le  \int_{0}^1 \int_t^{t+r}| \partial_y\rho(y,r)|\,dr \le \int_{0}^1 r \max_{t}| \partial_t\rho(t,r)|\,dr \le c,
 \end{align*}
and hence
\begin{align*}
&\qquad\int_{-\infty}^\infty \left(\int_{0}^1 |v(t+r)| |\rho(t,r)-\rho(t+r,r)|\,dr \right)^p \,dt\\
&\le  c\int_{-\infty}^\infty \int_{0}^1  |v(t+r)|^p |\rho(t,r)-\rho(t+r,r)|\,dr   \,dt\\
&\le  c \int_{0}^1 \int_{-\infty}^\infty|v(t+r)|^p \,dt \max_t|\rho(t,r)-\rho(t+r,r)|\,dr \\
&\le c\| v \|_{L^p(\mathbb{R})} \int_{0}^1 r \max_t|\partial_t\rho(t,r)|\,dr \le c \| v \|_{L^p(\mathbb{R})}.
\end{align*}
Meanwhile, applying the following observation
\begin{align*}
 \int_{1}^\infty |\rho(t,r)-\rho(t+r,r)|\,dr \le  \int_{1}^\infty |\rho(t,r)|+|\rho(t+r,r)|\,dr\le c,
\end{align*}
we have the following estimate
\begin{align*}
&\qquad\int_{-\infty}^\infty \left(\int_{1}^\infty |v(t+r)|  |\rho(t,r)-\rho(t+r,r)|\,ds \right)^p \,dt\\
&\le  c\int_{-\infty}^\infty \int_{1}^\infty  |v(t+r)|^p |\rho(t,r)-\rho(t+r,r)|\,ds   \,dt\\
&\le  c \int_{1}^\infty \int_{-\infty}^\infty|v(t+r)|^p \,dt (|\rho(t,r)|+|\rho(t+r,r)|)\,dr \\
& \le c \| v \|_{L^p(\mathbb{R})},
\end{align*}
which yields that
$$  \| I_2\|_{L^p (\mathbb{R})} \le C   \| v \|_{W^{1,p}(\mathbb{R})}.$$
This completes the proof for $p\in[1,\infty)$, and  the case  that $p=\infty$ follows analogously.
\end{proof}

Then we have the following result for a smooth function with compact support.
\begin{corollary}\label{cor:bddG-r2}
Let the kernel $\rho$ satisfy \ref{H0} and let the operator $D_{\infty}^{(\rho),*} $ be defined by \eqref{eqn:Gdstar}. Then $ D_{\infty}^{(\rho),*} v\in L^1(-\infty,T)\cap L^\infty(-\infty,T)$
for any $v\in  C_c^1(0,T)$.
\end{corollary}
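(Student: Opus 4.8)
The plan is to deduce the two integrability statements directly from Lemma~\ref{lem:bddG-r2} together with elementary support considerations, so no new estimate is really needed. Fix $v\in C_c^1(0,T)$ and extend it by zero to all of $\mathbb R$; the extension, which I still call $v$, then lies in $W^{1,p}(\mathbb R)$ for every $p\in[1,\infty]$, since $v$ is $C^1$ with compact support. In particular $v\in W^{1,1}(\mathbb R)\cap W^{1,\infty}(\mathbb R)$.

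First I would apply Lemma~\ref{lem:bddG-r2} with $p=\infty$ to conclude $D_{\infty}^{(\rho),*}v\in L^\infty(\mathbb R)$, hence certainly $D_{\infty}^{(\rho),*}v\in L^\infty(-\infty,T)$. Next I would apply Lemma~\ref{lem:bddG-r2} with $p=1$ to conclude $D_{\infty}^{(\rho),*}v\in L^1(\mathbb R)$, hence $D_{\infty}^{(\rho),*}v\in L^1(-\infty,T)$ by restriction, since the $L^1$ norm over $(-\infty,T)$ is dominated by the $L^1$ norm over $\mathbb R$. Combining the two gives $D_{\infty}^{(\rho),*}v\in L^1(-\infty,T)\cap L^\infty(-\infty,T)$, which is the claim. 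One should note for safety that $D_{\infty}^{(\rho),*}v$ is a well-defined function: the defining integral \eqref{eqn:Gdstar} converges absolutely for a.e.\ $t$ because, writing $D_{\infty}^{(\rho),*}v(t)=\int_0^\infty (v(t+r)-v(t))\rho(t,r)\,dr+\int_0^\infty v(t+r)(\rho(t,r)-\rho(t+r,r))\,dr$ as in the proof of Lemma~\ref{lem:bddG-r2}, the first term is controlled using $|v(t+r)-v(t)|\le r\|v'\|_\infty$ together with $\int_0^\infty(1\wedge r)\sup_t\rho(t,r)\,dr<\infty$ from \ref{H0}, and the second term using boundedness of $v$ together with $\int_0^\infty(1\wedge r)\sup_t|\partial_t\rho(t,r)|\,dr<\infty$ and the tail bound $\int_1^\infty\sup_t\rho(t,r)\,dr<\infty$.

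There is essentially no obstacle here: the corollary is a one-line consequence of Lemma~\ref{lem:bddG-r2} once one observes that a $C^1$ function with compact support belongs to every $W^{1,p}(\mathbb R)$ and that restriction to $(-\infty,T)$ only decreases $L^p$ norms. The only point requiring a moment's care is making sure the zero extension of $v\in C_c^1(0,T)$ is genuinely in $C^1(\mathbb R)$ (it is, since $v$ together with $v'$ vanishes near the endpoints of $(0,T)$), so that Lemma~\ref{lem:bddG-r2}, stated for $v\in W^{1,p}(\mathbb R)$, applies verbatim.
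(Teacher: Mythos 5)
Your proposal is correct and matches the paper's intended argument: the paper gives no separate proof of Corollary~\ref{cor:bddG-r2} precisely because it is meant to follow from Lemma~\ref{lem:bddG-r2} applied (with $p=1$ and $p=\infty$) to the zero extension of $v\in C_c^1(0,T)$, which is exactly what you do. Your extra remarks on the $C^1$ regularity of the extension and the absolute convergence of the defining integral are sound and harmless additions.
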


\begin{definition}\label{def:fwd}
We define the \emph{weak Marchaud-type derivative}  of a function $u\in L_{loc}^1(\mathbb{R};B)$, for  a Banach space $B$, to be a  function $\widetilde{D_{\infty}^{(\rho)} }u \in L_{loc}^1(\mathbb{R};B)$ that satisfies
$$\int_{\mathbb R} \widetilde{D_{\infty}^{(\rho)} }u(t) v(t) \,dt = \int_{\mathbb R} u(t) (D^{(\rho),*}_{\infty}v)(t) \,dt,\quad \text{for every }v\in C_c^\infty(0,T), $$
with the integral defined in the Bochner sense.
\end{definition}

The following Lemma gives the equivalence between the variational nonlocal operator and the strong one in the case that $B=\mathbb{R}$ and $\rho$ is variables-separable.

\begin{lemma}\label{lem:density2}
Suppose that  the kernel $\rho$ satisfies \ref{H0} and it is variables-separable, i.e., $\rho(t,r)=p(t)q(r)$ with $p(t) \in C^1[0,T]$ and $p(t)\ge c_1>0$.
Moreover, we let $u\in L^\infty(\mathbb{R}) $ and $ \widetilde{D_{\infty}^{(\rho)}} u\in L^2(0, T)$.
Then $\Gd u\in L^2(0,T)$ and
$$\Gd u=\widetilde \Gd u \quad \text{almost everywhere},$$
where $\Gd$ is defined by \eqref{eqn:op}.
\end{lemma}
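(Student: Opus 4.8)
The plan is to reduce the statement to the case of a translation-invariant kernel by using the product structure $\rho(t,r)=p(t)q(r)$, and then to approximate $u$ by mollification in the time variable.

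\emph{Step 1 (reduction to a translation-invariant kernel).} Since $\rho(t,r)=p(t)q(r)$, for every $t$ at which the integral converges one has $\Gd u(t)=p(t)\,G^q u(t)$, where $G^q u(t):=\int_0^\infty (u(t)-u(t-r))q(r)\,dr$ is the Marchaud-type operator with the fixed kernel $q$; and from \eqref{eqn:Gdstar} one checks directly that $D_\infty^{(\rho),*}v=G^{q,*}(pv)$, where $G^{q,*}v(t)=-\int_0^\infty (v(t)-v(t+r))q(r)\,dr$ is \eqref{eqn:Gdstar} with $\rho$ replaced by $q$. Because $p\in C^1[0,T]$ and $p\ge c_1>0$, the map $v\mapsto v/p$ preserves $C_c^1(0,T)$; combining this with Corollary \ref{cor:bddG-r2} and Lemma \ref{lem:bddG-r2} (which, via density of $C_c^\infty(0,T)$ in $C_c^1(0,T)$ in $W^{1,1}$ with fixed support, allow one to enlarge the class of test functions in Definition \ref{def:fwd} from $C_c^\infty(0,T)$ to $C_c^1(0,T)$), the hypothesis $\widetilde{\Gd}u\in L^2(0,T)$ becomes: the function $g:=p^{-1}\widetilde{\Gd}u\in L^2(0,T)$ satisfies $\int_{\mathbb R} g\,w=\int_{\mathbb R} u\,G^{q,*}w$ for all $w\in C_c^\infty(0,T)$, i.e.\ $g$ is the weak $q$-Marchaud derivative of $u$ on $(0,T)$. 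Since $p,1/p\in L^\infty[0,T]$, it now suffices to show $G^q u\in L^2(0,T)$ with $G^q u=g$ a.e.\ on $(0,T)$.

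\emph{Step 2 (mollification).} Let $\eta_\varepsilon$ be a standard mollifier and $u_\varepsilon:=u*\eta_\varepsilon$. Then $u_\varepsilon\in C^\infty(\mathbb R)\cap W^{1,\infty}(\mathbb R)$, $\|u_\varepsilon\|_{L^\infty(\mathbb R)}\le\|u\|_{L^\infty(\mathbb R)}$, and $u_\varepsilon\to u$ a.e.\ and in $L^p_{\mathrm{loc}}(\mathbb R)$. Because $u_\varepsilon$ is bounded and globally Lipschitz, $|u_\varepsilon(t)-u_\varepsilon(t-r)|\le C_\varepsilon(1\wedge r)$, so \eqref{eqn:domin} holds for $u_\varepsilon$ by \ref{H0}; hence the integration-by-parts computation behind Lemma \ref{lem:inte-p}, together with Definition \ref{def:fwd}, gives $G^q u_\varepsilon=\widetilde{G^q}u_\varepsilon$ (the two notions agree on smooth functions). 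Moreover $G^{q,*}$ commutes with translations, hence with convolution by $\eta_\varepsilon$, so testing against $v\in C_c^\infty([a,b])$ with $[a,b]\Subset(0,T)$ and moving $\eta_\varepsilon$ from $u$ onto the test function through $G^{q,*}$ yields $\widetilde{G^q}u_\varepsilon=g*\eta_\varepsilon$ on $[a,b]$ for all small $\varepsilon$. Therefore $G^q u_\varepsilon\to g$ in $L^2_{\mathrm{loc}}(0,T)$, and along a subsequence $\varepsilon_k\downarrow0$ one has $G^q u_{\varepsilon_k}(t)\to g(t)$ for a.e.\ $t\in(0,T)$.

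\emph{Step 3 (identification with the strong operator).} For $\delta>0$ and $v\in L^\infty(\mathbb R)$ the truncation $G^q_\delta v(t):=\int_\delta^\infty (v(t)-v(t-r))q(r)\,dr$ is well defined since $q\in L^1(\delta,\infty)$ by \ref{H0}, and $G^q_\delta u_{\varepsilon_k}\to G^q_\delta u$ in $L^2_{\mathrm{loc}}(0,T)$ for each fixed $\delta$ by dominated convergence. Writing $G^q u_{\varepsilon_k}=G^q_\delta u_{\varepsilon_k}+\int_0^\delta (u_{\varepsilon_k}(\cdot)-u_{\varepsilon_k}(\cdot-r))q(r)\,dr$ and letting $k\to\infty$, Step 2 reduces the remaining claim to showing $G^q_\delta u\to g$ in $L^2_{\mathrm{loc}}(0,T)$ as $\delta\downarrow0$. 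This is obtained by combining (i) the weak convergence $G^q_\delta u\rightharpoonup g$ in $L^2_{\mathrm{loc}}(0,T)$ — proved by testing against $w\in C_c^\infty(0,T)$, using Fubini to move the truncated kernel onto $w$, and passing to the limit using $|w(t)-w(t+r)|\le\|w'\|_\infty r$ and $\int_0^1 r\,q(r)\,dr<\infty$ to recover $\int u\,G^{q,*}w=\int g\,w$ — with (ii) a bound $\sup_{0<\delta<1}\|G^q_\delta u\|_{L^2(a,b)}<\infty$ for $[a,b]\Subset(0,T)$, coming from the fractional-Sobolev regularity of $u$ encoded in $g\in L^2$ (equivalently, applying the $W^{1,p}$-type estimates of Lemmas \ref{lem:bddG-r}, \ref{lem:bddG-r2} at the level of mollifications), which upgrades (i) to strong convergence. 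A subsequence $\delta_m\downarrow0$ then gives $G^q_{\delta_m}u\to g$ a.e.\ on $(0,T)$; since $G^q_\delta u(t)\to G^q u(t)=\int_0^\infty (u(t)-u(t-r))q(r)\,dr$ (the integral in \eqref{eqn:op}) whenever the latter improper integral exists, we conclude $G^q u=g$ a.e., $G^q u\in L^2(0,T)$, and hence $\Gd u=p\,G^q u=p\,g=\widetilde{\Gd}u\in L^2(0,T)$ a.e.

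\emph{Main obstacle.} The delicate point is the near-diagonal ($r\downarrow0$) part of Step 3: for $u$ merely in $L^\infty$ with $\widetilde{\Gd}u\in L^2$ the integral $\int_0^\infty |u(t)-u(t-r)|q(r)\,dr$ need not be finite, so $\Gd$ must be read as the improper integral $\lim_{\delta\downarrow0}\int_\delta^\infty$, and there is no absolutely convergent majorant; the convergence must be extracted from the cancellation in $u(t)-u(t-r)$ together with the fractional regularity inherited from $g\in L^2$. (If instead $q\in L^1(0,\infty)$ this case is immediate, since then $G^q u$ converges absolutely and is bounded.)
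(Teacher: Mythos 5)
Your Step 1 (the reduction to the time\textendash independent kernel $q$ via the identity $D_{\infty}^{(\rho),*}v=G^{q,*}(pv)$, with the test class enlarged from $C_c^\infty(0,T)$ to $C_c^1(0,T)$ by density and Lemma \ref{lem:bddG-r2}) is correct and is essentially the same reduction the paper performs, just run in the opposite order; Step 2 (the identity $G^qu_\varepsilon=g*\eta_\varepsilon$ on compact subsets of $(0,T)$, hence $G^qu_{\varepsilon_k}\to g$ in $L^2_{loc}$) is also sound. The divergence from the paper, and the gap, is Step 3 --- exactly the point you flag yourself as the ``main obstacle''. First, the asserted implication ``convergence of $\int G^q_\delta u\,w$ for $w\in C_c^\infty(0,T)$ together with $\sup_{0<\delta<1}\|G^q_\delta u\|_{L^2(a,b)}<\infty$ upgrades to strong $L^2_{loc}$ convergence'' is false as stated: a bounded family converging in the sense of distributions converges weakly, but nothing in your argument (no Radon--Riesz, compactness, monotonicity, or Cauchy-in-$\delta$ estimate) yields norm convergence, and it is norm or a.e.\ convergence of $G^q_\delta u$ that you need. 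Second, the uniform bound (ii) is itself unproved: Lemmas \ref{lem:bddG-r} and \ref{lem:bddG-r2} require $W^{1,p}$ regularity which $u\in L^\infty(\mathbb R)$ does not possess, and the invoked ``fractional-Sobolev regularity encoded in $g\in L^2$'' is not available for a general kernel satisfying \ref{H0} (which only gives $\int_0^\infty(1\wedge r)q(r)\,dr<\infty$ and a lower bound near $0$); extracting such regularity of $u$ from $\widetilde{G^q}u\in L^2$ is precisely the content of the statement being proved. Third, even granting a.e.\ convergence of $G^q_{\delta_m}u$ along a subsequence, the a.e.\ existence of the improper integral $\lim_{\delta\downarrow0}G^q_\delta u(t)$, which is what gives meaning to \eqref{eqn:op} and hence to the conclusion $\Gd u\in L^2(0,T)$, is assumed (``whenever the latter improper integral exists'') rather than established.

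For comparison, the paper closes this exact point differently: it splits $\Gd u=D^{(\rho)}_\delta u+\int_\delta^\infty(u(t)-u(t-r))\rho(r)\,dr$, observes that the tail term is trivially in $L^2(0,T)$ because $u\in L^\infty$ and $\rho\in L^1(\delta,\infty)$, so that $\widetilde{D^{(\rho)}_\delta}u\in L^2(0,T)$, and then invokes \cite[Lemma 2.4]{DYZ17} for the truncated, compactly supported kernel --- which is precisely the weak-equals-strong identification near the diagonal that your Step 3 attempts to reprove from scratch. To repair your argument you would need either to import that lemma (or reproduce its proof), or to genuinely establish the uniform $L^2$ bound on $G^q_\delta u$ together with a mechanism (e.g.\ a Cauchy estimate in $\delta$ exploiting the cancellation in $u(t)-u(t-r)$) forcing strong convergence; as written, the crux of the lemma is not proved.
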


\begin{proof}
First of all, we consider the case that the kernel function is translation preserved, i.e., $\rho(t,r)=\rho(r)$.
To this end, we define the truncated nonlocal operator
$$  D_{\delta}^{(\rho)} u(t) = \int_0^\delta (u(t)-u(t-r))\rho(r) \,dr$$
as well as its adjoint operator $D_{\delta}^{(\rho),*}$ and the weak operator $\widetilde{D_{\delta}^{(\rho)}}$.
Since for any $\delta>0$, we have
\begin{align*}
\int_\delta^{\infty} (u(t) - u(t-r)) \rho(r) \,dr
= u(t) \int_\delta^\infty \rho(r) \,dr - \int_\delta^ \infty u(t-r) \rho(r) \,dr \in L^2(0,T),
\end{align*}
by assumption \ref{H0}. By the definition of the weak operator, one may deduce that
$$\widetilde{D_{\delta}^{(\rho)}} u (t)= \widetilde{D_{\infty}^{(\rho)}} u(t) - \int_\delta^{\infty} (u(t) - u(t-r)) \rho(r) \,dr \in L^2(0,T)$$
%
Now by Lemma \cite[Lemma 2.4]{DYZ17} we have that $D_{\delta}^{(\rho)} u \in L^2(0,T)$ and $D_{\delta}^{(\rho)} u =\widetilde{D_{\delta}^{(\rho)} }u $.
As a result, we derive that
\begin{align*}
\Gd u(t) = \int_0^{\infty} (u(t) - u(t-r)) \rho(r) \,dr
=D_{\delta}^{(\rho)}u(t)  + \int_\delta^{\infty} (u(t) - u(t-r)) \rho(r) \,dr  \in L^2(0,T),
\end{align*}
and hence $\Gd u = \widetilde \Gd u$ almost everywhere.

Next, we consider the case that $\rho(t,r) = p(t) q(r)$ and define the operator
\begin{equation*}
 D_{\infty}^{(q)} u(t) = \int_0^\infty (u(t)- u(t-r)) q(r) \,ds.
\end{equation*}
The same as before, we may define corresponding adjoint and weak operators. Define $\langle f,g \rangle_a^b:=\int_a^b fg\,dt$, $b>a\ge-\infty$.
Then we note that
\begin{equation*}
\langle p\widetilde{D_{\infty}^{(q)} }u, v\rangle_{0}^T = \langle u,  D_{\infty}^{(q),*} (pv)\rangle_{-\infty}^T =   \langle u,  D_{\infty}^{(\rho),*} v\rangle_{-\infty}^T = \langle \widetilde{D_{\infty}^{(\rho)}} u,  v\rangle_{0}^T,
\end{equation*}
which together with the positivity assumption on $p(t)$ yields that
\begin{equation*}
  \widetilde{D_{\infty}^{(q)} }u(t) = \frac1{p(t) } \widetilde{D_{\infty}^{(\rho)}} u(t) \le \frac{1}{c_1}\left | \widetilde{D_{\infty}^{(\rho)}} u(t)\right| \in L^2(0,T).
\end{equation*}
As a result, we obtain that $D_{\infty}^{(q)}u(t) =  \widetilde{D_{\infty}^{(q)} }u(t)  \in L^2(0,T)$ and
 $$  \Gd u(t) =  {p(t) } \widetilde{D_{\infty}^{(q)}}u(t)  =\widetilde{\Gd}u(t)\in L^2(0,T).$$
\end{proof}

\begin{lemma}\label{lem:density3}
Let $\tilde u\in B((-\infty,T]\times\Omega)$ be the function defined in (\ref{thm:Caputo_SR}) under the assumptions \ref{H0} and \ref{H1}, for $\phi\in L^{\infty}(-\infty,0; H^1_0(\Omega))$ and $f\in L^{\infty}(0,T; H^1_0(\Omega))$.  Then $\tilde u\in L^{\infty}(-\infty,T; H^1_0(\Omega))$.
\end{lemma}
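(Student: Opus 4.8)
The plan is to read the right-hand side of \eqref{thm:Caputo_SR} as a Bochner integral with values in $H^1_0(\Omega)$ and to use that the Dirichlet heat semigroup $\{T^\Omega_s\}_{s\ge0}$ of Definition \ref{1process} is a contraction on $H^1_0(\Omega)$. For $t\le0$ there is nothing to prove, since $\tilde u(t,\cdot)=\phi(t,\cdot)$ and $\phi\in L^\infty(-\infty,0;H^1_0(\Omega))$, so fix $t\in(0,T]$. First I would record the standard fact that, equipping $H^1_0(\Omega)$ with the norm $\|\nabla\cdot\|_{L^2(\Omega)}$ (equivalent to the full $H^1$-norm on the bounded set $\Omega$), one has $T^\Omega_s\big(H^1_0(\Omega)\big)\subset H^1_0(\Omega)$ and $\|T^\Omega_s v\|_{H^1_0(\Omega)}\le\|v\|_{H^1_0(\Omega)}$ for every $s\ge0$ and $v\in H^1_0(\Omega)$: indeed $T^\Omega_s$ coincides on $L^2(\Omega)$ with $e^{s\Delta}$ for the Dirichlet Laplacian, $e^{s\Delta}L^2(\Omega)\subset\text{Dom}(\Delta)\subset H^1_0(\Omega)$ for $s>0$, and $\|\nabla e^{s\Delta}v\|_{L^2}=\|(-\Delta)^{1/2}e^{s\Delta}v\|_{L^2}=\|e^{s\Delta}(-\Delta)^{1/2}v\|_{L^2}\le\|(-\Delta)^{1/2}v\|_{L^2}=\|\nabla v\|_{L^2}$ by the spectral calculus (equivalently, $s\mapsto\int_\Omega|\nabla T^\Omega_s v|^2$ is non-increasing).

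Next I would condition on the temporal process, which is independent of $B^x$. Using Fubini and the identity $\mathbf E\big[g(B^x(s))\mathbf 1_{\{s<\tau_\Omega(x)\}}\big]=(T^\Omega_s g)(x)$ from Definition \ref{1process}, the representation \eqref{thm:Caputo_SR} can be rewritten, for $(t,x)\in(0,T]\times\Omega$, as
\[
\tilde u(t,x)=\mathbf E\Big[\big(T^\Omega_{\tau_0(t)}\phi(-X^{t,(\rho)}(\tau_0(t)),\cdot)\big)(x)\,\mathbf 1_{\{\tau_0(t)<\infty\}}\Big]+\mathbf E\Big[\int_0^{\tau_0(t)}\big(T^\Omega_s f(-X^{t,(\rho)}(s),\cdot)\big)(x)\,ds\Big],
\]
the outer expectations running over $-X^{t,(\rho)}$ only. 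I would then interpret the two terms as Bochner integrals with values in $H^1_0(\Omega)$: since $H^1_0(\Omega)$ is separable, $s\mapsto T^\Omega_s w$ is continuous in $H^1_0(\Omega)$ for $s>0$, and $r\mapsto\phi(r,\cdot)$, $r\mapsto f(r,\cdot)$ are measurable into $H^1_0(\Omega)$, the integrands are strongly measurable; and the scalar formula coincides with these Bochner integrals evaluated at $x$ by a further use of Fubini between $\Omega$ and the probability space.

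The norm estimates are then immediate from the contractivity in the first paragraph. For the forcing term, by the first part of assumption \ref{H1} the law of $-X^{t,(\rho)}(s)$ is absolutely continuous for each $s>0$, and $-X^{t,(\rho)}(s)\in(0,T]$ for $s<\tau_0(t)$, so $\|f(-X^{t,(\rho)}(s),\cdot)\|_{H^1_0(\Omega)}\le\|f\|_{L^\infty(0,T;H^1_0(\Omega))}$ for $(\mathbf P\otimes ds)$-a.e.\ $(\omega,s)$; hence
\[
\Big\|\mathbf E\Big[\int_0^{\tau_0(t)}T^\Omega_s f(-X^{t,(\rho)}(s),\cdot)\,ds\Big]\Big\|_{H^1_0(\Omega)}\le\|f\|_{L^\infty(0,T;H^1_0(\Omega))}\,\mathbf E[\tau_0(t)]\le C\,\|f\|_{L^\infty(0,T;H^1_0(\Omega))},
\]
with $C:=\sup_{t\in[0,T]}\mathbf E[\tau_0(t)]<\infty$ by Proposition \ref{3processesresults}-(iii) (which also gives $\tau_0(t)<\infty$ a.s.). For the initial-data term, the overshoot $-X^{t,(\rho)}(\tau_0(t))$ has, on $\{\tau_0(t)<\infty\}$, an absolutely continuous law on $(-\infty,0)$ — this can be read off from Proposition \ref{prop:HK} by integrating its heat kernel over the spatial variable and using the independence of $-X^{t,(\rho)}$ and $B^x$, or obtained directly from \ref{H1} via the compensation formula — so $\|\phi(-X^{t,(\rho)}(\tau_0(t)),\cdot)\|_{H^1_0(\Omega)}\le\|\phi\|_{L^\infty(-\infty,0;H^1_0(\Omega))}$ a.s.\ on that event, and therefore
\[
\Big\|\mathbf E\big[T^\Omega_{\tau_0(t)}\phi(-X^{t,(\rho)}(\tau_0(t)),\cdot)\,\mathbf 1_{\{\tau_0(t)<\infty\}}\big]\Big\|_{H^1_0(\Omega)}\le\|\phi\|_{L^\infty(-\infty,0;H^1_0(\Omega))}.
\]
Adding the two bounds gives $\|\tilde u(t,\cdot)\|_{H^1_0(\Omega)}\le\|\phi\|_{L^\infty(-\infty,0;H^1_0(\Omega))}+C\|f\|_{L^\infty(0,T;H^1_0(\Omega))}$ for a.e.\ $t\in(0,T]$, which together with $\tilde u(t,\cdot)=\phi(t,\cdot)$ for $t\le0$ yields $\tilde u\in L^\infty(-\infty,T;H^1_0(\Omega))$.

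I expect the main obstacle to be not the estimates but the measure-theoretic bookkeeping around the Bochner integrals: verifying strong $H^1_0(\Omega)$-valued measurability of $\omega\mapsto T^\Omega_{\tau_0(t)}\phi(-X^{t,(\rho)}(\tau_0(t)),\cdot)$ and of $(\omega,s)\mapsto T^\Omega_s f(-X^{t,(\rho)}(s),\cdot)$, justifying the passage between the scalar and the Bochner pictures, and — the genuinely substantive point — confirming the absolute continuity of the overshoot distribution, which is what allows one to pass from the essential supremum defining $\|\phi\|_{L^\infty(-\infty,0;H^1_0(\Omega))}$ to an almost-sure pointwise bound along the random state $-X^{t,(\rho)}(\tau_0(t))$.
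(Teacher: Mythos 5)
Your proof is correct in substance and rests on the same two pillars as the paper's — the $H^1_0(\Omega)$-contractivity of the killed heat semigroup $T^\Omega_s$ and a Bochner-integral reading of \eqref{thm:Caputo_SR} — but it is organized differently. The paper does not condition on the temporal process at all: it invokes Proposition \ref{prop:HK} to rewrite $u(t,x)=\int_\Gamma T^\Omega_s\phi(r,x)\,\mu_t(ds\,dr)$ with $\Gamma=(-\infty,0)\times(0,\infty)$ and the \emph{deterministic} probability measure $\mu_t(ds\,dr)=\bigl(\int_0^t\rho(z,z-r)p_s^{(\rho)}(t,z)\,dz\bigr)ds\,dr$, checks Bochner integrability of $(s,r)\mapsto T^\Omega_s\phi(r,\cdot)$ into $H^1_0(\Omega)$ using $\|T^\Omega_s\phi(r)\|_{H^1(\Omega)}\le\|\phi\|_{L^\infty(-\infty,0;H^1_0(\Omega))}$, and then verifies by hand (Fubini plus Cauchy--Schwarz against $\mu_t\otimes\mu_t$) that the candidate $\nabla\tilde u(t)$ is the weak gradient and lies in $L^2(\Omega)$; it treats only $f=0$, whereas you handle the forcing term explicitly via $\sup_t\mathbf E[\tau_0(t)]<\infty$, exactly as the paper would. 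Your route buys a cleaner norm estimate (the weak-gradient identification is automatic for an $H^1_0$-valued Bochner integral), while the paper's route buys something you correctly flagged as the delicate point: since \eqref{SRin} integrates $\phi$ against an absolutely continuous measure, the fact that $\phi$ is only defined up to a Lebesgue-null set of times never becomes an issue.

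One caveat on how you propose to close that point. After conditioning, you need the \emph{unconditional} law of the overshoot $-X^{t,(\rho)}(\tau_0(t))$ to be absolutely continuous on $(-\infty,0)$, and integrating $H_{t,x}$ over $y\in\Omega$ only yields the overshoot law restricted to $\{\tau_0(t)<\tau_\Omega(x)\}$, so of your two suggested routes it is the compensation-formula one (equivalently, rerunning the Dynkin-formula computation of Proposition \ref{prop:HK} for the time process alone) that actually delivers what you need. Alternatively you can dodge the issue entirely: by Proposition \ref{prop:HK} the function $u$ in \eqref{thm:Caputo_SR} is unchanged if $\phi$ is modified on a Lebesgue-null set of times, so you may fix a representative with $\|\phi(r,\cdot)\|_{H^1_0(\Omega)}\le\|\phi\|_{L^\infty(-\infty,0;H^1_0(\Omega))}$ for \emph{every} $r<0$, after which your almost-sure bound holds trivially. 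With either repair the argument is complete.
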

\begin{proof} Consider (\ref{thm:Caputo_SR}) for  $f=0$ (the proof for $f\neq 0$ is similar and omitted). Fix $t>0$.  By \cite[Chapter 7.1]{Evans} we have $T^\Omega_s\phi(r,\cdot) =\mathbf E[\phi(r,B^\cdot(s))\mathbf 1_{\{s<\tau_{\Omega}\}}]\in H^1_0(\Omega)$ for a.e. $r\in (-\infty,0)$ and $s\ge0$. Consider the Borel probability space $(\Gamma,\mu_t)$, where $\Gamma =(-\infty,0)\times(0,\infty)$ and $\mu_{t}(dsdr)=\left(\int_0^t\rho(z,z-r) p_s^{(\rho)}(t,z)\,dz\right)dsdr$, so that formula \eqref{SRin} reads $u(t,x)=\int_{\Gamma} T^\Omega_s\phi(r,x)\, \mu_t(dsdr)$.  Note that for a.e. $r\in(-\infty, 0)$ and every $s\ge 0$
$$
\|T^\Omega_{s}\phi(r)\|_{H^1(\Omega)} \le \|\phi(r)\|_{H^1(\Omega)}\le  \|\phi\|_{L^{\infty}(-\infty,0; H^1_0(\Omega))}=:C,
$$
where the first inequality holds by \cite[Chapter 7.1, Theorem 5.(i)]{Evans}, as  $\phi(r)\in H^1_0(\Omega)$ for a.e. $r\in(-\infty,0)$.
We conclude that $\tilde u(t)\in H^1_{0}(\Omega)$, because the above bound proves that $T^\Omega_{\cdot}\phi(\cdot):(\Gamma,\mu_t) \to H^{1}_0(\Omega)$ is Bochner integrable, which implies that $\tilde u(t)=\int_{\Gamma}T^\Omega_{\cdot}\phi(\cdot)\,\mu_t(d\cdot)=\lim_{n\to\infty}S_n$ in $H^1(\Omega)$, where each $S_n$ is a linear combination of functions in $H^1_{0}(\Omega)$.\\
Formula \eqref{SRin} suggests the definition
\begin{align*}
\nabla \tilde u(t,x) :&= \int_{-\infty}^0 \left(\int_0^t \rho(z,z-r)\left(\int_0^\infty \nabla T^\Omega_s\phi(r,x)p_s^{(\rho)}(t,z)\,ds\right)dz\right)dr\\
&=\int_{\Gamma} \nabla T^\Omega_s\phi(r,x)\, \mu_t(dsdr).
\end{align*}
Then $\nabla \tilde u(t)\in L^2(\Omega)$, because
\begin{align*}
\int_\Omega \left(\nabla \tilde u(t,x)\right)^2\,dx&=\int_\Omega\left( \int_{\Gamma} \nabla T^\Omega_s\phi(r,x)\, \mu_t(dsdr)\right)\left( \int_{\Gamma} \nabla T^\Omega_{s'}\phi(r',x)\, \mu_t(ds'dr')\right)dx\\
&=\int_{\Gamma}\int_{\Gamma}\left(  \int_\Omega \nabla T^\Omega_s\phi(r,x)\nabla T^\Omega_{s'}\phi(r',x) \,dx \right)\mu_t(dsdr)\mu_t(ds'dr')\\
&\le\int_{\Gamma}\int_{\Gamma}\| T^\Omega_s\phi(r)\|_{H^1(\Omega)}\| T^\Omega_{s'}\phi(r')\|_{H^1(\Omega)}\,\mu_t(dsdr)\mu_t(ds'dr')\\
&\le C^2 \left(\int_{\Gamma}\mu_t(dsdr)\right)^2=C^2.
\end{align*}

 Applying Fubini's Theorem to the definition of weak derivative proves that $\nabla \tilde u(t)$ is indeed the weak derivative of $\tilde u(t)$. Finally, $\sup_{t\in(0,T)}\int_\Omega \left(\nabla \tilde u(t,x)\right)^2\,dx\le C^2$ and the smoothness of $\phi$ implies that $ \tilde u\in L^{\infty}(-\infty,T; H^1_0(\Omega))$, concluding the proof.
\end{proof}

Next we shall show that the stochastic representation \eqref{thm:Caputo_SR} provides the weak solution of problem \eqref{preRL},
whose definition is given as below.

\begin{definition}\label{def:DYZws}
A function $ u$ is called a \emph{weak solution to problem } \eqref{preRL}  if $ u \in L^2(0,T;H_0^1(\Omega))$ and $\widetilde {D_{\infty}^{(\rho)}} u \in L^2(0,T; H^{-1}(\Omega))$,
and for every $v \in L^2(0,T;H_0^1(\Omega))$ (with zero extension to $t<0$)
\begin{equation}\label{DYZws}
\left\{\begin{split}
\langle \widetilde {D_{\infty}^{(\rho)}}  u,v \rangle  &= -\langle \nabla   u,\nabla v \rangle+ \langle f, v \rangle, &\text{and},\\
 u(t) &= \phi(t), &\text{for a.e. } t\in (-\infty,0),
\end{split}\right.
\end{equation}
where  the notation $\langle\cdot,\cdot\rangle$ is defined by
$$
\langle u,v\rangle=\int_{-\infty}^T\int_\Omega u(t,x)v(t,x)\,dx\,dt,
$$
or the duality in case that $u\in L^2(0,T;H^{-1}(\Omega))$.
\end{definition}

\begin{remark}\label{rem:weak}
If $u$ is the weak solution of \eqref{preRL} and $\widetilde {D_{\infty}^{(\rho)}} u \in L^2(0,T;L^2\II)$, we have $\widetilde{D_{\infty}^{(\rho)}} u =D_{\infty}^{(\rho)} u $
by Lemma \ref{lem:density2}, provided that
the kernel function is variables-separable, i.e., $\rho(t,s)=p(t)q(s)$ with $p(t) \in C^1[0,T]$ and $p(t)\ge c_1>0$. Then $u$ satisfies the equation \eqref{preRL} almost everywhere.
\end{remark}

\begin{theorem}\label{thm:main}
Assume \ref{H0} and \ref{H1}. 
Let $u$ be given by formula \eqref{thm:Caputo_SR}, where $\phi\in L^{\infty}(-\infty,0; H^1_0(\Omega))\cap L^\infty((-\infty,0)\times\Omega)$ and $f\in L^{\infty}(0,T; H^1_0(\Omega))\cap L^\infty((0,T)\times\Omega)$. Define the  extension $\tilde u$ of $u$  as
	\begin{equation}\label{ext}
	\tilde u:=\left\{\begin{split}
	u,& &\text{on }&(0,T]\times  \Omega,\\
	\phi,& &\text{on }& (-\infty,0)\times  \Omega.
	\end{split}
	\right.
	\end{equation}
	Then $\tilde u$ is a weak solution to problem (\ref{preRL}).
\end{theorem}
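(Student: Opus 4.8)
The plan is to verify the two conditions in Definition \ref{def:DYZws} directly for $\tilde u$. The second one, that $\tilde u(t)=\phi(t)$ for a.e.\ $t\in(-\infty,0)$, is immediate from the definition \eqref{ext}. For the first one I would first establish the regularity claims: $\tilde u\in L^2(0,T;H^1_0(\Omega))$ follows at once from Lemma \ref{lem:density3}, which gives $\tilde u\in L^\infty(-\infty,T;H^1_0(\Omega))$ under exactly the stated hypotheses on $\phi$ and $f$. The remaining regularity requirement, $\widetilde{D_\infty^{(\rho)}}\tilde u\in L^2(0,T;H^{-1}(\Omega))$, I would get from the a priori bound in Lemma \ref{lem:bddG-r} applied with $p=2$: since $\tilde u\in L^\infty(\mathbb R;H^1_0(\Omega))\subset W^{1,2}$-in-time is too strong, I should instead argue that $\widetilde{D_\infty^{(\rho)}}\tilde u$ coincides (as a distribution) with the right-hand side $-\Delta\tilde u+f$, which lies in $L^2(0,T;H^{-1}(\Omega))$ because $\tilde u\in L^2(0,T;H^1_0(\Omega))$ and $f\in L^2(0,T;L^2(\Omega))$; this is really the content of the variational identity itself, so the two halves of the proof merge.

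The core of the argument is the variational identity: for every $v\in L^2(0,T;H^1_0(\Omega))$ (extended by zero to $t<0$),
\begin{equation*}
\langle\widetilde{D_\infty^{(\rho)}}\tilde u,v\rangle=-\langle\nabla\tilde u,\nabla v\rangle+\langle f,v\rangle.
\end{equation*}
By density and linearity it suffices to prove this for $v(t,x)=\psi(t)w(x)$ with $\psi\in C_c^\infty(0,T)$ and $w\in C_c^\infty(\Omega)$. I would start from the generalized solution produced by Theorem \ref{lem_sdoggs}: approximate $\phi$ and $f$ by smooth, compatible data $\phi_n,f_n$ (mollification in time and space, adjusting for the compatibility condition $g_n(0)=-\Delta\phi_n(0)$ as in Definition \ref{def:gen_sol}), so that the corresponding $u_n$ are solutions in the domain of the generator and satisfy \eqref{postRL} pointwise. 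For these smooth $u_n$ the nonlocal operator acts classically, so $D_\infty^{(\rho)}\tilde u_n-\Delta\tilde u_n=f_n$ holds pointwise on $(0,T]\times\Omega$. Multiplying by $\psi(t)w(x)$, integrating over $(-\infty,T]\times\Omega$, applying Lemma \ref{lem:inte-p} in the time variable (whose hypothesis \eqref{eqn:domin} is guaranteed by the uniform $H^1$-bound plus Lemma \ref{lem:bddG-r}) to move $D_\infty^{(\rho)}$ onto the test function as $D_\infty^{(\rho),*}$, and integrating by parts in space for the Laplacian term, gives the variational identity for $\tilde u_n$.

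Finally I would pass to the limit $n\to\infty$. On the right-hand side, $\langle\nabla\tilde u_n,\nabla v\rangle\to\langle\nabla\tilde u,\nabla v\rangle$ and $\langle f_n,v\rangle\to\langle f,v\rangle$ follow from the weak convergence $\tilde u_n\rightharpoonup\tilde u$ in $L^2(0,T;H^1_0(\Omega))$ (a consequence of the uniform bound from Lemma \ref{lem:density3} together with the pointwise convergence $\tilde u_n\to\tilde u$ established in Remark \ref{rmk:lpconv2}, which identifies the limit) and from $f_n\to f$ in $L^2$. On the left-hand side, after the integration by parts the integrand is $\tilde u_n(t,x)\,(D_\infty^{(\rho),*}\psi)(t)\,w(x)$; here $D_\infty^{(\rho),*}\psi\in L^1(-\infty,T)\cap L^\infty(-\infty,T)$ by Corollary \ref{cor:bddG-r2}, and $\tilde u_n\to\tilde u$ in $L^p((-\infty,T)\times\Omega)$ for all $p<\infty$ together with $\sup_n\|\tilde u_n\|_\infty<\infty$ (again from Remark \ref{rmk:lpconv2}) lets me pass to the limit by dominated convergence. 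This yields $\langle\tilde u,D_\infty^{(\rho),*}v\rangle=-\langle\nabla\tilde u,\nabla v\rangle+\langle f,v\rangle$, i.e.\ by Definition \ref{def:fwd} the weak derivative $\widetilde{D_\infty^{(\rho)}}\tilde u$ exists in $L^2(0,T;H^{-1}(\Omega))$ and satisfies \eqref{DYZws}. The main obstacle I anticipate is the construction of the approximating sequence $\phi_n,f_n$ that is simultaneously smooth enough to make the $u_n$ classical, compatible in the sense of Theorem \ref{lem_sdoggs}, and convergent in both the $L^\infty$-weak-$*$ sense of Remark \ref{rmk:lpconv2} and the $H^1$-bounded sense of Lemma \ref{lem:density3}; checking that mollification respects all of these simultaneously (especially near the boundary $\partial\Omega$ and near $t=0$) is the delicate point, and is the reason the theorem only asserts existence, not uniqueness, of the weak solution.
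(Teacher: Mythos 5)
Your overall strategy (approximate the data, exploit the stability of the stochastic representation via Remark \ref{rmk:lpconv2}, dualize the nonlocal operator onto test functions through Lemma \ref{lem:inte-p}/Corollary \ref{cor:bddG-r2}, and recover spatial regularity from Lemma \ref{lem:density3}) is the same skeleton as the paper's proof, but there is a genuine gap at the pivotal step. You assert that after mollifying the data the corresponding $u_n$ are ``solutions in the domain of the generator and satisfy \eqref{postRL} pointwise,'' so that $D^{(\rho)}_\infty\tilde u_n-\Delta\tilde u_n=f_n$ holds classically and Lemma \ref{lem:inte-p} can be applied. This does not follow: Theorem \ref{lem_sdoggs}-(i) only places $u_n$ in $\mathrm{Dom}(\mathcal L^{(\rho)}_\Omega)$, and $\mathcal L^{(\rho)}_\Omega$ is the \emph{closure} of $-D^{(\rho)}_0+\Delta$ on the core $\mathrm{Span}\{C^1[0,T]\cdot\mathrm{Dom}(\Delta_\Omega)\}$; smoothness of the data does not give membership in the core or classical differentiability of the solution (no such regularity theory is available in the paper), so the pointwise identity and the hypothesis \eqref{eqn:domin} are unjustified. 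Your proposed justification of \eqref{eqn:domin} by ``the uniform $H^1$-bound plus Lemma \ref{lem:bddG-r}'' also does not work, since \eqref{eqn:domin} demands integrability of increments in \emph{time}, while the $H^1_0(\Omega)$ bound controls only spatial derivatives. The paper closes exactly this hole by a different device: it approximates the \emph{solution}, not the data, writing $u-\phi(0)$ as a graph-norm limit of core elements $\hat u_n$ (via Lemma \ref{lem32} and Corollary \ref{cor32}-(i)), for which $\mathcal L^{(\rho)}_\Omega\hat u_n=(-D^{(\rho)}_0+\Delta)\hat u_n$ holds classically; only for these does it dualize, and then passes to the limit in the tested identity using uniform convergence. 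Your argument needs this (or an equivalent) insertion before any integration by parts in time is legitimate.

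Two further, more minor points. First, collapsing the paper's Steps 1--3 into a single mollification requires the approximating initial data to satisfy \ref{H2} (so that the representation \eqref{thm:Caputo_SR} and Theorem \ref{lem_sdoggs}-(iii) apply to them); the paper arranges this by taking separable approximants $\phi_{K,j}\in\mathrm{Span}\{C^\infty_c(-K,0)\cdot C^\infty_c(\Omega)\}$ (Remark \ref{rem:H2}-(i)), and a generic space--time mollification does not automatically land in this class, so you would have to check \ref{H2} for your $\phi_n$. Second, your plan to integrate by parts in space at the approximate level and pass to the limit by weak $L^2(0,T;H^1_0(\Omega))$ convergence is an avoidable complication: the paper keeps $\Delta$ on the test function throughout the limit and only at the very end uses Lemma \ref{lem:density3} to move one derivative back onto $\tilde u$, which sidesteps the need to establish uniform $H^1$ bounds for all approximants (also note the sign slip: the identification should read $\widetilde{D^{(\rho)}_\infty}\tilde u=\Delta\tilde u+f$ in $H^{-1}(\Omega)$).
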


\begin{proof} Assume for the first two steps  that $\phi$  satisfies \ref{H2}.\\
\emph{Step 1}:   Let $u$ be a solution in the domain of the generator to problem (\ref{postRL}) for $g\equiv  f+f_\phi$, and initial condition $\phi(0)$, for some $f\in C_{\partial\Omega}([0,T]\times\Omega)$. As $u\in \text{Dom}(\mathcal L_\Omega^{(\rho)})$,
 by Lemma \ref{lem32}, $u-\phi(0)\in\text{Dom}(\mathcal L_\Omega^{(\rho),\text{kill}})$, and hence applying Corollary \ref{cor32}-(i) there exists
$\{\hat u_n\}_{n\in\mathbb N}\subset \text{Dom}(\mathcal L_\Omega^{(\rho),\text{kill}})$
such that
$$\hat u_n\to u-\phi(0),~~ \mathcal L_\Omega^{(\rho)}\hat u_n\to \mathcal L_\Omega^{(\rho)} (u-\phi(0))~~\text{and} ~~\mathcal L_\Omega^{(\rho)}\hat u_n=(-D_0^{(\rho)}+\Delta)\hat u_n.$$
Then we apply Lemma \ref{lem32} and Lemma \ref{lem:31} to obtain that
$$u_n:=\hat u_n+\phi(0)\in  \text{Dom}(\mathcal L_\Omega^{(\rho)}), ~~u_n\to u, ~~
\mathcal L_\Omega^{(\rho)}u_n=\mathcal L_\Omega^{(\rho)}\hat u_n+\Delta\phi(0)\to \mathcal L_\Omega^{(\rho)} u$$
and $u_n(0)=\phi(0)$ for all $n\in\mathbb N$. Then using the fact that  $D^{(\rho)}_\infty \tilde u_n = D^{(\rho)}_0u_n-f_\phi$  for $t\in[0,T]$, we have
\begin{equation*}
 (D^{(\rho)}_0 - \Delta) u_n - f_\phi = \Gd \tilde u_n - \Delta \tilde u_n,\quad \text{on }[0,T]\times\Omega,
\end{equation*}
where $ \tilde u_n $ is defined for each $n\in\mathbb N$ by
	\begin{equation}\label{eqn:extun}
	\tilde u_n :=\left\{\begin{split}
	u_n,& &\text{on }&(0,T]\times  \Omega,\\
	\phi,& &\text{on }& (-\infty,0]\times  \Omega.
	\end{split}
	\right.
	\end{equation}
Therefore, we have that
\begin{align*}
  (-D_{\infty}^{(\rho)}+\Delta)\tilde u_n
=  (-D^{(\rho)}_0+\Delta)u_n+f_\phi  \rightarrow  \mathcal L_\Omega^{(\rho)} u+f_\phi =   - f,
\end{align*}
where the convergence is in $C_{\partial\Omega}([0,T]\times\Omega)$.

On the other hand, we apply Corollary \ref{cor:bddG-r2} for any $ v \in C_c^\infty((0,T)\times \Omega)$ to obtain as $n\to\infty$
\begin{equation*}
  \langle (-D_{\infty}^{(\rho)}+\Delta)\tilde u_n , v \rangle
=  \langle \tilde u_n , (-D_{\infty}^{(\rho),*}+\Delta) v \rangle \rightarrow \langle \tilde u , (-D_{\infty}^{(\rho),*}+\Delta) v \rangle,
\end{equation*}
where Corollary \ref{cor:bddG-r2} guarantees that $(-D_{\infty}^{(\rho),*}+\Delta) v \in L^1((-\infty,0)\times\Omega)\cap L^\infty ((0,T)\times\Omega)$,
and hence
\begin{equation*}
\langle u, (D_{\infty}^{(\rho),*}  - \Delta) v\rangle = \langle f, v\rangle,\qquad \text{for any} ~~v \in C_c^\infty((0,T)\times \Omega).
\end{equation*}


\emph{Step 2}:  Let now $u$ be the generalized solution to problem (\ref{postRL}) for $g=f+f_\phi$, where $f\in L^\infty((0,T)\times\Omega)$,
and let $\tilde u$ be its extension with historical initial data $\phi$.
By the definition of the generalized solution, we
pick a sequence $f_n\in C_{\partial\Omega} ([0,T]\times\Omega) $ such that
$$f_n\to f\quad  \text{a.e.},\qquad f_n(0)=-(f_\phi(0)+\Delta\phi(0))\qquad \text{and}\qquad \sup_n\|f_n\|_\infty<\infty.$$
Besides, we denote by $u_n$ the respective solution in the domain of the generator and let $\tilde u_n$ be its extension by \eqref{eqn:extun}.
Then by Step 1, we know that each $\tilde u_n$ satisfies
$$
\langle \tilde u_n , (-D_{\infty}^{(\rho),*}+\Delta) v \rangle
=\langle -f_n , v \rangle, \qquad \text{for any}\quad v\in   C_c^\infty((0,T)\times \Omega),
$$
as well as the initial and boundary conditions in \eqref{preRL}. Now the Dominated Convergence Theorem provided the uniform upper bound of $f_n$ implies that
$$ f_n \rightarrow f \qquad \text{in}\qquad L^2(0,T; L^2\II)~~\qquad \text{as}\quad n\rightarrow \infty.  $$
On the other hand, we have
$\tilde u_n \to \tilde u$ in $L^2(0,T; L^2\II)$ by Remark \ref{rmk:lpconv1}.
Meanwhile $(D_{\infty}^{(\rho),*}-\Delta) v\in L^1((-\infty,0)\times\Omega)\cap L^\infty ((0,T)\times\Omega)$ for any $ v \in C_c^\infty((0,T)\times \Omega)$ by Corollary \ref{cor:bddG-r2}.
Therefore we obtain as $n\to\infty$ 
\begin{equation*}
\langle \tilde u_n , (D_{\infty}^{(\rho),*}-\Delta) v \rangle \rightarrow
\langle \tilde u , (D_{\infty}^{(\rho),*}-\Delta) v \rangle,\qquad \text{for any}\quad v\in   C_c^\infty((0,T)\times \Omega).
\end{equation*}

\emph{Step 3}:
Now we consider the case that $\phi\in  L^{\infty}(-\infty,0; H^1_0(\Omega))\cap L^\infty((-\infty,0)\times\Omega))$
and $f\in L^{\infty}(0,T; H^1_0(\Omega))\cap L^\infty((0,T)\times\Omega)$.
To this end, we set functions $\phi_K(t,x)=\phi(t,x) \mathbf 1_{\{t<-K\}}$,
for $K\in\mathbb N$. By the density of $\text{Span}\{C^\infty_c(-K,0)\cdot C^\infty_c(\Omega)\}$ in $B([-K,0]\times\overline\Omega)$ with respect to sequential convergence a.e.,  we choose $\phi_{K,j}\in\text{Span}\{C^\infty_c(-K,0)\cdot C^\infty_c(\Omega)\}$
such that
$$\phi_{K,j}\to \phi_K\quad  \text{a.e.}\quad  \text{and}\quad  \sup_j\|\phi_{K,j}\|_\infty<\infty.$$
By Remark \ref{rem:H2}-(i), we know that $\phi_{K,j}$  satisfies assumption \ref{H2} for each $j\in\mathbb N$.
Denote by $u_{K,j}$ the generalized solution with the initial data $\phi_{K,j}$ and source term $f$,
and denote by $u_K$ the function given by formula (\ref{thm:Caputo_SR}) with $\phi\equiv \phi_K$ and source term $f$.
By Remark \ref{rmk:lpconv2} we conclude that
$$\sup_j\|\tilde u_{K,j}\|_\infty<\infty\quad \text{and}\quad \tilde u_{K,j}\rightarrow\tilde u_K\quad \text{a.e. on} ~~(-K,T]\times\Omega.$$
Then for any $ v \in C_c^\infty((0,T)\times \Omega)$,
we know that $(D_{\infty}^{(\rho)}-\Delta)^*v\in L^1((-K,0)\times\Omega)\cap L^\infty((0,T)\times\Omega)$  by  Corollary \ref{cor:bddG-r2}, and hence
\begin{equation}\label{almost_finished}
  \langle  \tilde u_K , (D_{\infty}^{(\rho),*}-\Delta)v \rangle=\lim_{j\to\infty}\langle  \tilde u_{K,j} , (D_{\infty}^{(\rho),*}-\Delta)v \rangle =\langle  f ,   v \rangle,
\end{equation}
and $\tilde u_K = \phi_K$ on $(-K,0]\times\Omega$.
We can now  pass to the limit as $K\to\infty$ in \eqref{almost_finished}, given that $\tilde u_K\to \tilde u$ a.e. on $(-\infty,T)\times\Omega$, with $\sup_K\|\tilde u_{K}\|_\infty<\infty$, again by Remark \ref{rmk:lpconv2}, and $(D_{\infty}^{(\rho)}-\Delta)^*v\in  L^1((-\infty,0)\times\Omega)\cap  L^\infty((0,T)\times\Omega)$ by Corollary \ref{cor:bddG-r2}. Here $u$ is defined by \eqref{thm:Caputo_SR} for $\phi$ and $f$, and $\tilde u$ by \eqref{ext}.
Therefore
$$
 \langle  \tilde u , (D_{\infty}^{(\rho),*}-\Delta)v \rangle
 = \langle f , v \rangle.
$$
 By Lemma  \ref{lem:density3} and the smoothness of the problem data $f$ and $\phi$ we obtain $\tilde u\in L^2(0,T; H^1_0(\Omega))$ and $\tilde u $ satisfies the identities in \eqref{DYZws}. Also,  for every $w\in H^1_0(\Omega)$, $v\in C_c^1(0,T)$, and properties of Bochner integrals
$$
 \int_{-\infty}^T  ( \tilde u(t), w) D^{(\rho),*}_\infty v(t) dt = \Big( \int_0^T  (\Delta{\tilde  u}(t)+f(t)) v(t)dt,w\Big),
$$
where $(\cdot,\cdot)$ is the dual pairing of $H^1_0(\Omega)$. Then by the smoothness of $v$, the left hand side satisfies
$$
 \int_{-\infty}^T  ( \tilde u(t), w) D^{(\rho),*}_\infty v(t) dt = \int_{-\infty}^T  ( \tilde u(t)D^{(\rho),*}_\infty v(t) , w) dt  =  \Big( \int_{-\infty}^T \tilde u(t) D^{(\rho),*}_\infty v(t) \,dt, w\Big).
$$
Therefore, we derive that
$$
   \int_{-\infty}^T \tilde u(t) D^{(\rho),*}_\infty v(t) \,dt =  \int_0^T  (\Delta{\tilde u}(t)+f(t)) v(t)dt .
$$
This confirms that $\widetilde{D^{(\rho)}_\infty}\tilde u = \Delta{\tilde u} +f  \in L^2(0,T;H^{-1}(\Omega))$, and we proved that $u$ is a weak solution to problem  \eqref{preRL}.

\end{proof}


 \begin{remark}\label{rem:unique}
 The uniqueness of the weak solution can be derived straightforwardly, provided that the kernel function is variables-separable and satisfies the assumption given in Lemma \ref{lem:density2}.
  In case that $f=0$ and $\phi=0$,  we let $\{\lambda_n\}_{n=1}^\infty$ be the set of all eigenvalues of $-\Delta$
  and $\varphi_n$ be the eigenfunction corresponding to $\lambda_n$. Then for any $\psi\in C_c^\infty(0,T)$, we have
 \begin{equation*}
  \int_0^T  {\widetilde {D_{\infty}^{(\rho)}}} (u(t), \fy_n) \psi(t)\,dt +  \int_0^T \lambda_n (u(t), \fy_n)\psi(t)\,dt = 0
\end{equation*}
As a result,  $(u(t), \fy_n)$ is the solution of the initial value problem
 \begin{equation*}
 {\widetilde {D_{\infty}^{(\rho)}}} (u(t), \fy_n) +  \lambda_n (u(t), \fy_n)  = 0\qquad \text{with} ~~(u(t), \fy_n) = 0~~\text{for all}~t<0.
\end{equation*}
Then Lemma \ref{lem:density2} and the uniqueness of the solution \cite[Section 3]{DYZ17}\footnote{The uniqueness argument for the initial value problem in \cite[Section 3]{DYZ17} can be extended to time-dependent kernels $\rho$ satisfying \ref{H0}.} yields that $(u(t), \fy_n) = 0$ for all $n$, and hence $u(t)\equiv0$. See \cite{allen17} for a discussion of uniqueness of weak solutions in the time-fractional case.
\end{remark}

\begin{remark}
If $\phi(t,x) \equiv \phi_0(x)\in H^1_0(\Omega)$ in Theorem \ref{thm:main},
then one recovers the weak solution to the (inhomogeneous) Caputo-type fractional diffusion equation \cite{Chen17,HKT17}
\begin{align*}
u(t,x)   =\mathbf E\left[\phi_0\left(B^x(\tau_0(t)\right)\mathbf1_{\{\tau_0(t) < \tau_\Omega(x)\}}\right] + \mathbf E\left[\int^{\tau_0(t) \wedge \tau_\Omega(x)}_0 f\left(-X^{t,(\rho)}(s),B^x(s)\right) ds \right].
\end{align*}
\end{remark}

\begin{remark}\label{rem:conti0}
The solution in Theorem \ref{thm:main} will be continuous at $t=0$ for every $x\in\Omega$ if $\phi$ is continuous at every point in $\{0\}\times\Omega$  and $\tau_0 :[0,T]\to \mathbb R$ is continuous. This can be proved by a stochastic continuity argument for the first term of the solution \eqref{thm:Caputo_SR}, and for the second term one can use $\mathbf E[\tau_0 (t)]\to 0$ as $t\downarrow 0$ (which is a consequence of the continuity of $\tau_0 $). However, the solution \eqref{thm:Caputo_SR} will in general fail to be continuous at $t=0$ even for smooth data. This is for example the case of integrable kernels $\int_0^\infty \rho(r)\,dr<\infty$ (see \cite[Remark A.3]{T18}). See Section \ref{sec:ik} for an example.
\end{remark}


\section{Numerical Results}
In this section, we present some numerical results to verify the stochastic representation formula.
To this end, we consider the one-dimensional nonlocal diffusion problem \eqref{preRL} in the unit interval $\Omega=(-1,1)$.

\subsection{Non-integrable kernels}
We start with the case of non-integrable kernel function
\begin{equation}\label{eqn:ker1}
 \rho_\delta(r)=(1-\alpha)\delta^{\alpha-1}r^{-\alpha-1}\mathbf 1_{(0,\delta)}(r),
\end{equation}
with $\alpha\in(0,1)$ and the following data.
\begin{itemize}
\item[(a)] initial data $\phi(x,t)=e^{5t}(1+x)(1-x)^2x$ and zero source term $f\equiv0$;
\item[(b)] trivial initial data $\phi(x,t)=0$ and  source term $f=\sin(10t)(1-x)x\sin(\pi x)$.
\end{itemize}
The kernel function is proposed in this way in order to keep that
$  \int_0^\delta r \rho_\delta(r) \,dr =1 $
and hence the nonlocal operator recovers the infinitesimal first-order derivative as the nonlocal horizon diminishes.
The analytical property of the model has been extensively studied in \cite{DYZ17}.

The stochastic process generated by spatially second-order derivative (with zero boundary conditions),
which is well-known as the killed Brownian motion in the domain $\Omega=(-1,1)$,
 can be simply approximated by the lattice random walk.
Specifically, we divided the interval  $(-1,1)$ into $M$ small intervals, with the uniform mesh size $h=2/M$ and
grid points $x_j = jh-1$, $j=0,1,2,\ldots,M.$
Then in each time level, the particle standing in the grid points $x_j$ will randomly move to grid points $x_{j-1}$ or $x_{j+1}$.
In case that the particle hits the boundary of $\Omega$, then the time is set as $\tau_\Omega(x_j)$.
Here we let $B_h^{x_j}(t)$ be the position where the particle starting at position $x_j$ arrives at time $t$.

Similarly, the stochastic process generated by the operator
$$-D_\delta^{(\rho)}  u (t) = -\int_0^\delta (u(t)-u(t-r)) \rho_\delta(r) \,dr $$
with historical initial data could also be approximated by a one-dimensional lattice random walk,
where the trajectory of the particle involves some long-distance jumps.
To numerically simulate the stochastic process, we discretize  $[0,T]$ into $N$ small intervals $[t_{n-1},t_n]$ with $n=1,2,\ldots,N$ and let $k=T/N$.
Then we consider the discretization (assume that $\delta=mk$)
\begin{equation}\label{eqn:disc}
\begin{split}
 D_\delta^{(\rho)} u(t_n) 
=&\ \int_0^k (u(t_n)-u(t_n-r)) \rho_\delta(r)\,dr + \sum_{j=2}^m \int_{(j-1)k}^{jk} (u(t_n)-u(t_n-r)) \rho_\delta(r) \,dr \\
\approx &\ \frac{u(t_n)-u(t_{n-1})}{k}\int_0^k  r\rho_\delta(r)\,ds +  \sum_{j=2}^m (u(t_n)-u(t_{n-k}))\int_{(j-1)k}^{jk}\rho_\delta(r) \,dr \\
= &\ \frac1{k^\alpha}\Big(\omega_0 u(t_n)- \sum_{j=1}^m \omega_j u(t_{n-j}) \Big)=: \bar D_\delta^{(\rho)} u(t_n). \\
\end{split}
\end{equation}
Here the weights $\{\omega_j\}_{j=0}^m$ are computed exactly as
$$ \omega_0=\delta^{\alpha-1}\Big(1+\frac{1-\alpha}{\alpha} (1-m^{-\alpha})\Big),\quad \omega_1=\delta^{\alpha-1}  $$
and
$$\omega_j=\delta^{\alpha-1}\frac{1-\alpha}{\alpha}((j-1)^{-\alpha} - j^{-\alpha}), ~~j=2,3,\ldots,m.  $$
At each time level, the particle standing at the grid point $t_j$ will jump to one of the grid points $t_{j-i}$, for $i=1,2,...,m$, with the probability
$ p_i = \omega_i / \omega_0 $. It is easy to verify that $\sum_{j=1}^m \omega_j = \omega_0$ and hence $\sum_{j=1}^m p_j =1$.
We let $\tau_0(t_n)$ be the time that the particle starting at $t_n$ passes $0$, and $X_k^{t_n,(\rho)}(\tau_0(t_n))$  be the position where the particle arrives below $0$.
Then by applying the scaling $2\alpha k^\alpha = h^2 \delta^{\alpha-1}$, the solution of the nonlocal-in-time evolution equation \eqref{preRL} can be computed by
\begin{equation*}
\begin{split}
  U_h^n =
&\mathbf E\left[ \phi\left(-X_k^{t_n,(\rho)}(\tau_0(t)),B_h^{x_j}(\tau_0(t_n))\right)\mathbf 1_{\{\tau_0(t_n)< \tau_\Omega(x_j)\}}\right]\\
&\quad+\mathbf E\left[\int_0^{\tau_0(t_n)\wedge \tau_\Omega(x_j)}f\left(-X_k^{t_n,(\rho)}(s),B_h^{x_j}(s)\right)ds\right],
\end{split}
\end{equation*}
using the Monte Carlo method, where the integral of the second term is computed by the trapezoid rule.

\begin{figure}[h!]
\centering
\subfigure[$T=0.1$]{\includegraphics[trim = 0cm 0cm 0cm .2cm, clip=true,height=4cm]{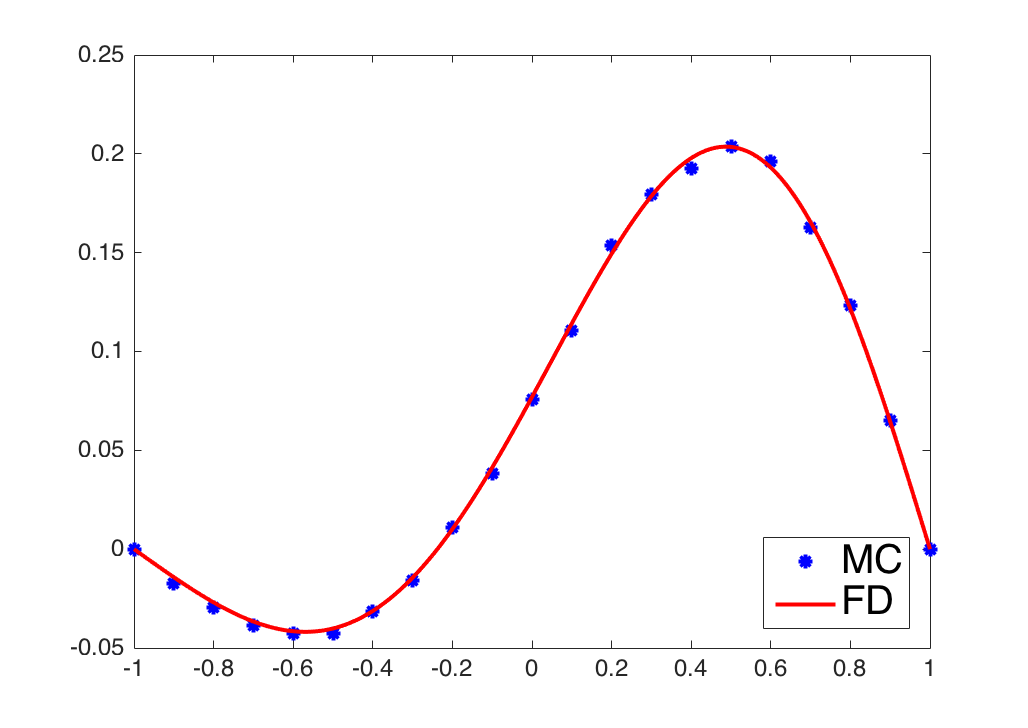}}
\subfigure[$T=0.2$]{\includegraphics[trim = 0cm 0cm 0cm .2cm, clip=true,height=4cm]{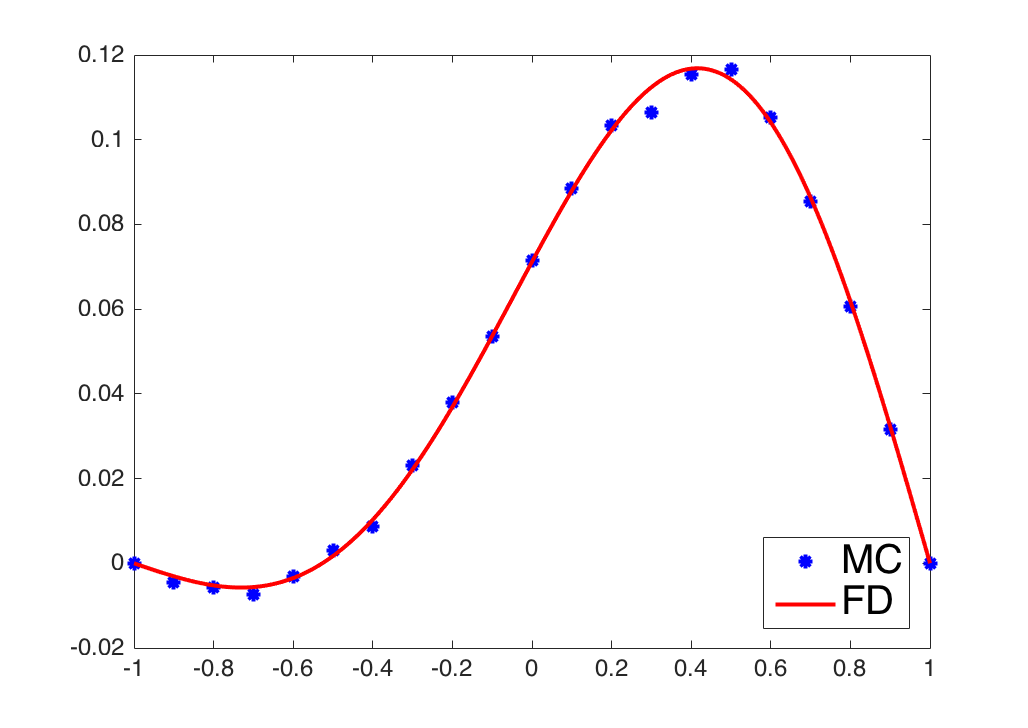}}\\
\subfigure[$T=0.4$]{\includegraphics[trim = 0cm 0cm 0cm .2cm, clip=true,height=4cm]{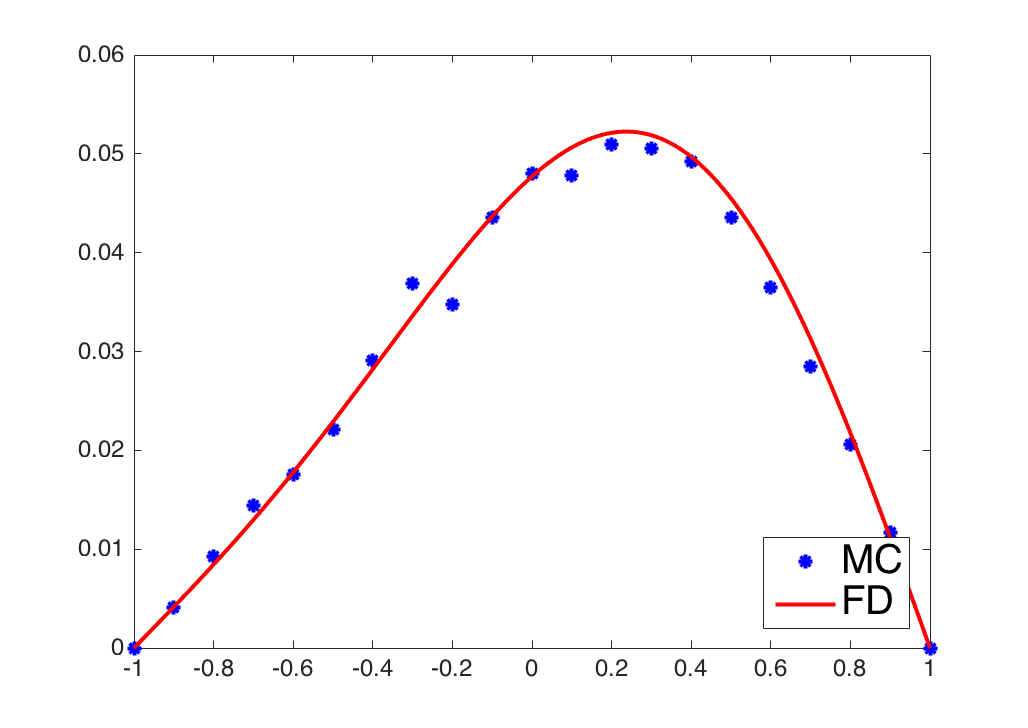}}
\subfigure[$T=0.6$]{\includegraphics[trim = 0cm 0cm 0cm .2cm, clip=true,height=4cm]{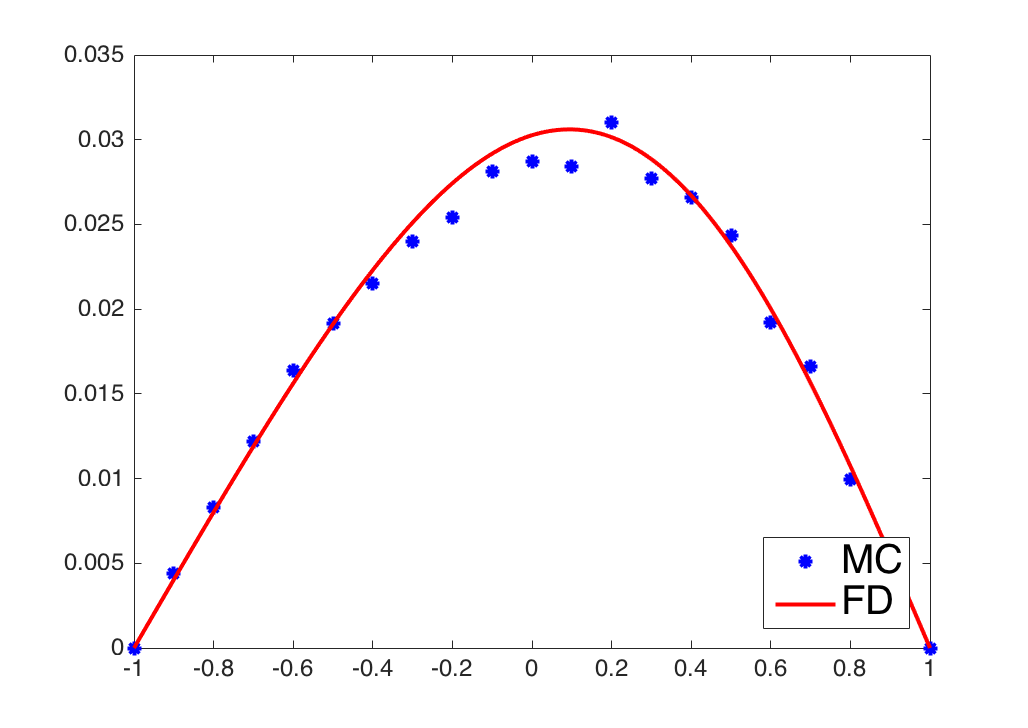}}
 \caption{Numerical solutions of Example (a) with $\delta=0.2$ and $\alpha=0.75$. (Blue dots: numerical solutions computed by the stochastic representation and Monte Carlo method (MC), with
  $h=0.02$, $k={^\alpha\hskip-7pt\sqrt{h^2\delta^{\alpha-1}/2\alpha}}$ and $50000$. Red curves: reference solutions computed by finite difference method (FD) with $h=10^{-3}$ and $k=10^{-4}$.)}\label{fig:a}
\end{figure}

In Figures \ref{fig:a} and \ref{fig:b}, we plot the numerical solution of nonlocal-in-time diffusion model \eqref{preRL}
where the nonlocal operator involves the finite-horizon kernel function \eqref{eqn:ker1} with $\alpha=0.75$ and $\delta=0.2$, at different time levels, $T=0.1$, $0.2$, $0.4$ and $0.6$.
To compute the numerical solution, we let $h=0.02$ and   $k={^\alpha\hskip-7pt\sqrt{h^2\delta^{\alpha-1}/2\alpha}} $ , and
use $50000$ Monte Carlo trials.
Since the closed form of the analytical solution is not available, the benchmark solutions
are computed by finite difference scheme
\begin{equation*}\label{master3}
\begin{split}
\bar D_\delta^{(\rho)} u_h^n  - \bar\partial_{xx}^h u_h^n = f^n
\end{split}
\end{equation*}
with a very fine mesh, say $k=10^{-4}$ and $h=10^{-3}$, where the discrete operator in time $\bar D_\delta^{(\rho)} $ is given by  \eqref{eqn:disc}
and the spatial one $\bar\partial_{xx}^h$ is the central difference approximation to the second order derivative.
In Figures \ref{fig:a} and \ref{fig:b}, the solution computed using the stochastic representation
 formula and the Monte Carlo method (MC) is plotted by blue dots while the finite difference solution (FD) is plotted by the red curves.
 We observe that the numerical solution computed by the stochastic approach
is very close to the one computed by the finite difference scheme, which supports our theoretical results.

\begin{figure}[h!]
\centering
\subfigure[$T=0.1$]{\includegraphics[trim = 0cm 0cm 0cm .2cm, clip=true,height=4cm]{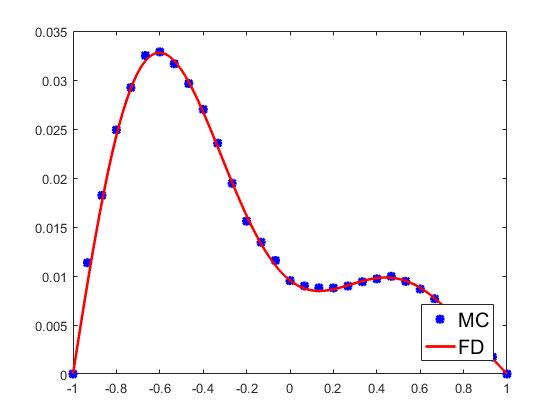}}
\subfigure[$T=0.2$]{\includegraphics[trim = 0cm 0cm 0cm .2cm, clip=true,height=4cm]{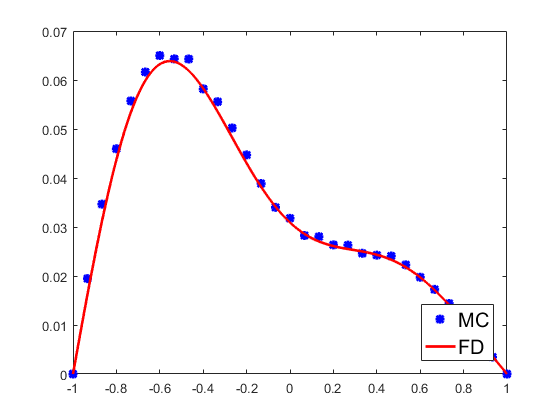}}\\
\subfigure[$T=0.4$]{\includegraphics[trim = 0cm 0cm 0cm .2cm, clip=true,height=4cm]{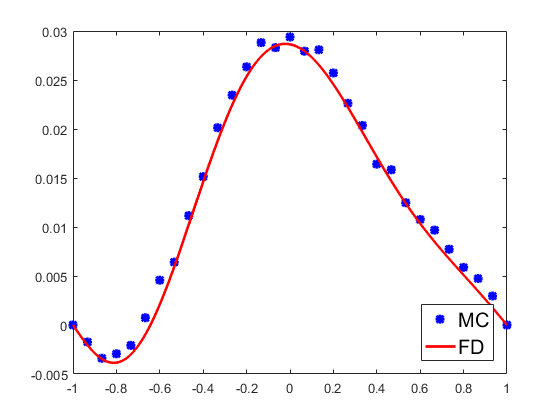}}
\subfigure[$T=0.6$]{\includegraphics[trim = 0cm 0cm 0cm .2cm, clip=true,height=4cm]{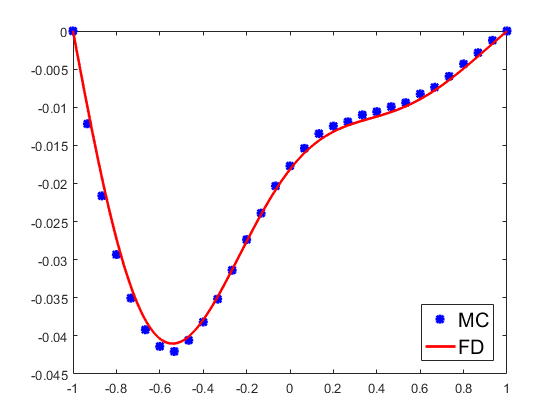}}
 \caption{Numerical solutions of Example (b) with $\delta=0.2$ and $\alpha=0.75$. (Blue dots: numerical solutions computed by the stochastic representation and Monte Carlo method (MC), with
  $h=0.02$, $k={^\alpha\hskip-7pt\sqrt{h^2\delta^{\alpha-1}/2\alpha}}$ and $50000$ trials. Red curves: reference solutions computed by finite difference method (FD) with $h=10^{-3}$ and $k=10^{-4}$.)}\label{fig:b}
\end{figure}

\subsection{Integrable kernels}\label{sec:ik}
Next, we present some numerical results for a special integrable kernel
which is the Dirac measure concentrated at $\delta>0$ weighted by $\lambda>0$, i.e.,
$$-D_{\infty}^{(\rho)}u(t):=(u(t-\delta)-u(t))\lambda.$$
 This nonlocal operator is the generator of a decreasing Poisson process, which performs negative jumps of size $\delta$ after   a $\lambda$-exponential waiting time. Hence we have
 $$t-X^{(\rho)}(\tau_0(t))=t-n\delta\quad \text{a.s.},\quad \text{for} ~~t\in((n-1)\delta,n\delta],~~n\in\mathbb N,$$
and $\tau_0(t)$ is a $\text{Gamma}(n,\lambda)$ random variable. Then solution to problem \eqref{preRL} with zero source term $f=0$
 allows the stochastic representation \eqref{thm:Caputo_SR} for $t\in((n-1)\delta,n\delta]$, $n\in\mathbb N$,
\begin{equation*}
u(t,x)=\mathbf E\left[\phi\left( t-n\delta, B^x(\tau_0(t))\right)\mathbf 1_{\{\tau_0(t)<\tau_\Omega(x)\}}\right].
\end{equation*}
Note that even if $\phi\in C^\infty([-\delta,0]\times\overline\Omega)$, in general
$$\lim_{t\downarrow 0} u(t,x)=\mathbf E[\phi( -\delta, B^x(\tau_0(1)))\mathbf 1_{\{\tau_0(1)<\tau_\Omega(x)\}}]\neq \phi(0,x).$$
In Figure \ref{c}, we plot the numerical solutions (blue dots) with $\lambda=1$ at different time levels, where $h=0.04$ and  $50000$ Monte Carlo trials are used.
Again,  the reference solutions, plotted by red curves,  are computed by the finite difference method
$$ \lambda(u_h^n - u_h^{n-\delta/k}) - \bar \partial_{xx} u_h^n = f^n,\quad n=1,2,\ldots,N, $$
with very fine meshes, i.e., $k=10^{-3} $ and $h=10^{-3}$.
Numerical results show that the Monte Carlo simulation using the Feynman-Kac formula approximates the solution very well.
\begin{figure}[h!]
\centering
\subfigure[$T=0.1$]{\includegraphics[trim = 0cm 0cm 0cm .2cm, clip=true,height=4cm]{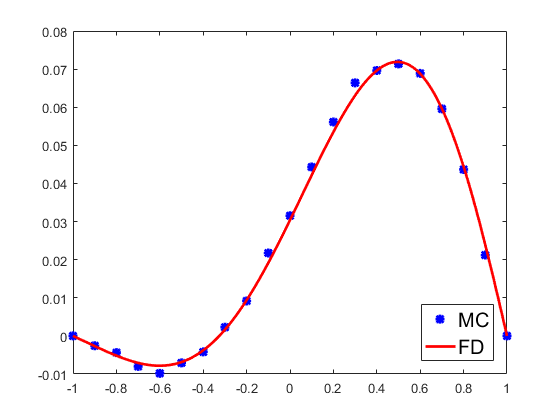}}
\subfigure[$T=0.25$]{\includegraphics[trim = 0cm 0cm 0cm .2cm, clip=true,height=4cm]{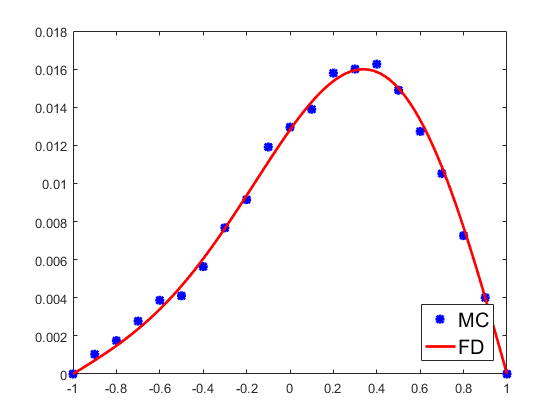}} \\
\subfigure[$T=0.3$]{\includegraphics[trim = 0cm 0cm 0cm .2cm, clip=true,height=4cm]{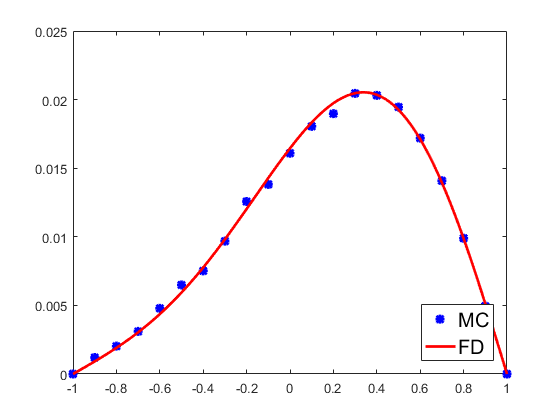}}
\subfigure[$T=0.45$]{\includegraphics[trim = 0cm 0cm 0cm .2cm, clip=true,height=4cm]{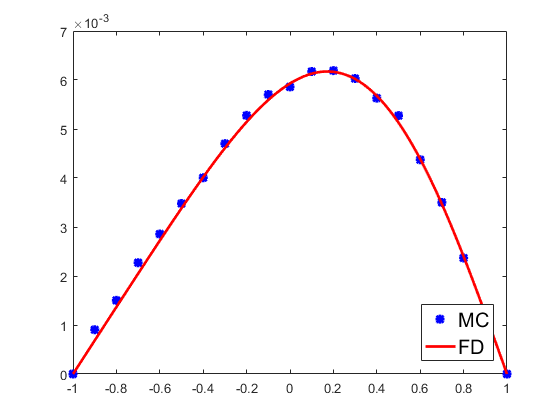}}
 \caption{Numerical solutions for the integrable kernel with $\lambda=1$ and $\delta=0.2$. (Blue dots: numerical solutions computed by MC, with
  $h=0.04$ and $50000$ trials. Red curves: reference solutions computed by FD with $h=10^{-3}$ and $k=10^{-3}$.)}
\end{figure}\label{c}

\section{Concluding Remarks}
In this paper, we study the stochastic representation for an initial-boundary value problem of a nonlocal-in-time evolution
equation \eqref{preRL}, where the nonlocal operator appearing in the model is the Markovian generator of a $(-\infty,T]$-valued decreasing L\'evy-type  process.
Under certain hypothesis, we derive the Feynman-Kac formula of the solution
by reformulating the original problem into a Caputo-type nonlocal model with a specific forcing term.
The case of weak data is also studied by energy arguments.
The stochastic representation leads to
a numerical scheme based on the Monte Carlo approach
for both integrable or non-integrable kernel functions.
The current theoretical results could be used to  give more rigorous analysis of the stochastic algorithms  for the nonlocal-in-time model.
It is also an interesting topic to study some quantitative  properties, such as asymptotical compatibility with shrinking nonlocal horizon parameter, of those algorithms.

%


\end{document}